\newcounter{contador}
\newcounter{teoA}
\newtheorem{teoa}[teoA]{Theorem}
\newtheorem{propo}[contador]{Proposition}
\newtheorem{teo}[contador]{Theorem}
\newtheorem{lem}[contador]{Lemma}
\newtheorem{defi}[contador]{Definition}
\newtheorem{corol}[contador]{Corollary}
\theoremstyle{remark}
\newtheorem{nota}[contador]{Remark}
\newcounter{ex}
\newcommand{\R}{{\mathbb R}}
\newcommand{\N}{{\mathbb N}}
\newcommand{\Z}{{\mathbb Z}}
\title{Pointwise periodic maps with quantized first integrals}
\author{Anna Cima$^{(1)}$, Armengol Gasull$^{(1,2)}$, V\'{\i}ctor Ma\~{n}osa$^{(3)}$ and Francesc Ma\~{n}osas$^{(1)}$
    \\*[.1truecm]
    {\small \textsl{$^{(1)}$ Departament de Matem\`{a}tiques, Facultat
            de Ci\`{e}ncies,}}
    \\*[-.25truecm] {\small \textsl{Universitat Aut\`{o}noma de Barcelona,}}
    \\*[-.25truecm] {\small \textsl{08193 Bellaterra, Barcelona,
    Spain}}\\
 \\*[-.25truecm] {\small \textsl{$^{(2)}$ Centre de Recerca Matem\`{a}tica, Campus de
Bellaterra,}}
    \\*[-.25truecm] {\small \textsl{08193 Bellaterra, Barcelona, Spain}}
    \\*[-.25truecm] {\small \textsl{cima@mat.uab.cat,
            gasull@mat.uab.cat, manyosas@mat.uab.cat}}\\
    \\*[-.25truecm] {\small \textsl{$^{(3)}$ Departament de Matem\`{a}tiques,}}
     \\*[-.25truecm] {\small \textsl{Institut de Matem\`{a}tiques de la UPC-BarcelonaTech (IMTech),}}           
    \\*[-.25truecm] {\small \textsl{Universitat Polit\`{e}cnica de Catalunya}}
    \\*[-.25truecm] {\small \textsl{Colom 11, 08222 Terrassa, Spain}}
    \\*[-.25truecm] {\small \textsl{victor.manosa@upc.edu}}}
\date{}
\begin{document}

\maketitle
\begin{abstract}   We describe the global dynamics of some pointwise periodic piecewise linear maps in the plane that exhibit interesting dynamic features. For each of these maps we find a first integral. For these integrals the set of values are
discrete, thus quantized. Furthermore, the level sets are bounded sets whose interior
is formed by a finite number of open tiles of certain
regular or uniform tessellations. 
The action of the maps on each invariant set of tiles is described geometrically.

\end{abstract}

%
%

\noindent {\sl  Mathematics Subject Classification 2010:}
Primary: 37C25, 39A23. Secondary: 37C55, 37J35, 52C20.

\noindent {\sl Keywords:}  Periodic points; pointwise periodic maps;
piecewise linear maps; quantized first integrals; regular and uniform
tessellations.

\newpage

\section{Introduction}


A \emph{pointwise periodic map} is a bijective self-map in a
topological space  such that each point is periodic. A
\emph{periodic map} is a bijective self-map in a topological space
such that some iterated of the map is the identity.  For a periodic
map $F:X\longrightarrow X$ the minimum natural number $p$ satisfying
$F^p=\mathrm{Id}$ is called \emph{the period} of $F.$ Notice that a
pointwise periodic map satisfying that the period of the points has
an upper bound is periodic and its period is the least common
multiple of the periods of the elements of the space.

A classical result of Montgomery establishes that any
\emph{pointwise periodic} \emph{homeomeorphism} in an Euclidean
space is \emph{periodic}, \cite{Mont}. Non-periodic but pointwise periodic bijective maps do exist when the continuity assumption is relaxed, see \cite{VS} for instance. In the series of papers
\cite{ChaChe13,ChaChe14} and \cite{ChaWanChe12}, the authors
introduce three explicit examples of pointwise
periodic maps that are not periodic. The examples given by
these authors in the above mentioned references belong  to the
family of piecewise affine
maps with a line of discontinuity:
\begin{equation}\label{E:Grho}
G(x,y)=\left(y,-x-\rho y+\operatorname{sign}(y)\right),\quad
\mbox{where}\quad\operatorname{sign(y)}=\left\{
 \begin{array}{ll}
 +1, & \hbox{if $y\ge0$;} \\
 -1, & \hbox{otherwise,}
  \end{array}
  \right.
\end{equation} for $|\rho|<2.$ In particular they
correspond to the cases $\rho\in\{-1,0,1\}$. There are other values
of~$\rho$ for which there exist non-periodic points, see
\cite{ChaCheYeh2019}. Notice also that maps \eqref{E:Grho}
correspond to the second order discontinuous difference equations
$x_{n+2}=-x_n-\rho x_{n+1}+\operatorname{sign}(x_{n+1}).$

As we will see in next section, each map $G$ is linearly conjugate
with the piecewise rotation map
\begin{equation}\label{e:normalform}
F(x,y)= \left(\begin{array}{rr}
\cos(\alpha)&\sin(\alpha)\\
-\sin(\alpha)&\cos(\alpha)
\end{array}\right)\left(\begin{array}{ll}
x-\operatorname{sign}(y)\\
y
\end{array}\right),
\end{equation}
where $\rho=-2\cos(\alpha)$ with $\alpha\in(0,\pi).$ Observe that
the maps $G$ with $\rho=-1,0$ and $1$ are conjugate with the maps
$F$ with $\alpha={\pi}/{3},{\pi}/{2}$ and ${2\pi}/{3},$
respectively. As we will see, the normal form $F$ regularizes the
shape of the invariant sets and keeps the same discontinuity line
$y=0.$  These maps are included in the class of symmetric maps studied in the remarkable paper \cite{GQ} 
together with other more general piecewise rotations,  see a further comment below. As noticed in \cite{BG}, they exhibit complex dynamics and they belong to the type of piecewise rotations with the same rotation angle that elude the
generic dichotomy that  appears  in most piecewise rotations   of being globally attracting or globally repelling maps, see \cite[Theorem 1]{BG}.

Piecewise affine
maps with a line of discontinuity appear as
models in many fields like in the study of mechanical systems with
friction, power electronics, relay control systems or economics
\cite{BV01,B99,ZM03}. In fact, as is explained in \cite{ChaChe13,ChaChe14,ChaWanChe12}, the three maps \eqref{E:Grho} with $\rho\in\{-1,0,1\}$
appear in the study of steady states of certain
cellular neural networks. Despite their apparent simplicity,
piecewise affine maps exhibit great dynamic richness and a variety
of phenomena that are characteristic of these systems, see
\cite{BBCK,BG,GQ,SW08,ZM03}  and references therein.  As we will show, the examples 
considered in this paper are also very rich from a dynamical viewpoint,
even though each orbit is periodic. In fact, one of our motivations
was to highlight the beautiful features of these
examples.

Recall
that a first integral of a discrete dynamical system associated with
a map $F$ is a non-constant real valuated function $V$ such that
$V\circ F=V$, which means that the level sets $\{V=c\}$, typically
called the energy levels, are invariant under the action of
the map. 
It is known that periodicity issues are related
with  integrability since most continuous periodic maps are
completely integrable (there exist as many functionally independent first integrals as the dimension of the phase space), see \cite{CGM06} and \cite{CGMM16}.  In this
work we consider the piecewise affine maps $F$ with $\alpha\in\{\pi/3,\pi/2,2\pi/3\}$ under the light of their properties as
integrable systems.  For each of these three maps, we obtain a non-trivial
\emph{first integral} which is defined in an open and dense set
of $\R^2$ and have discrete (or quantized) energy levels. 
Then we describe their global features in terms of the
dynamics induced by the maps on the level sets of the first
integrals.  These level sets are bounded, with positive measure and their
interior is formed by a finite number of some prescribed tiles of
certain regular or uniform tessellations  forming necklaces, see Figures \ref{f:T90}, \ref{f:T120}
and \ref{f:T60}. The existence of necklaces in piecewise isometries is well known. For instance, in \cite[Theorem 1 and Lemma 11]{GQ}
it is established the existence of some invariant necklaces defined by convex polygons containing periodic islands for a family of maps that contain the ones studied in this papers. These necklaces and the set of periods associated with their periodic orbits are characterized both analytically and geometrically, and its existence is the key to prove the boundedness of the orbits of the maps considered there. We want to point out that in our maps, all the integral's level sets are necklaces. In Remark \ref{r:final} we comment the relation between both families of necklaces. In addition, as we will see, the maps  $F$ with $\alpha\in\{\pi/3,\pi/2,2\pi/3\}$ have also a second continuous  first integral, see Remark \ref{r:dist}. This second first integral, however, is not useful to control the set of periods.

Planar piecewise isometries appear in the study of polygonal dual billiards (\cite{DT,GuSi,Moser}). The results in the literature
indicate that  some polygonal dual billiards should  also have quantized integrals,
see Figure 3 in \cite{Moser},  Figure 2 in \cite{DT}, or Figures 3--5 and the results in \cite{VS}. We believe that the explicitness  of the analytic expression of the quantized integrals
with positive measure level sets for the maps \eqref{e:normalform} is quite novel in the
context of discrete dynamical systems theory.  
It is interesting to notice the fact that the regular tessellations that we find in this paper also appear in the study of some polygonal dual billiards like the one introduced by Moser in \cite{Moser} or those that appear in \cite{VS}. Observe, however, that these dual billiards are
not conjugated to any map considered in our paper, because they exhibit different sets of periods.

A consequence of our results for $F,$ when
$\alpha\in\{\pi/3,\pi/2,2\pi/3\},$ is the existence of an open and
dense subset $\mathcal{U}$ on which the dynamics of the map is
strongly stable and simple. We will see that for any $x\in
\mathcal{U}$ there exists an open neighborhood of $x$, say $\mathcal{U}_x$,
and $n_x\in \N$ such that
$F^{n_x}\vert_{\mathcal{U}_x}=\operatorname{Id}.$ Moreover, varying
$x\in\mathcal{U},$ the values $n_x$ are unbounded.

We will study the three cases separately in three different
sections. In a few words, the main results that we will state in detail in the
next section, are:
\begin{enumerate}[(a)]
\item We present  first integrals $V$ for each case.
See Section \ref{s:firstintegrals} for a constructive approach for
obtaining them.

\item The interior of the level sets of each first integral is described in terms of some prescribed open \emph{tiles} of a
regular or uniform tessellation of $\R^2$, see Figures \ref{f:T90},
\ref{f:T120} and \ref{f:T60}. In all cases, each of them is a
necklace whose beads  (the open tiles) are open sets having one of the following three
shapes: squares ($\alpha=\pi/2),$ triangles and hexagons
($\alpha\in\{\pi/3,2\pi/3\}$). In the three figures the beads of a
necklace have the same color. In fact, the shape and the number of
beads, say $M,$ only depend on the level set $k$ and $\alpha.$
Moreover, the inter-tile dynamics can be described in a very simple way:
if we collapse each of the open tiles in a point, the
interior of $\{(x,y): V(x,y)=c\}$ can be identified with $\Z_M,$
simply following the order given by the necklace in clockwise sense, were, as usual, given $q\in\Z,$ we
 denote by $\Z_q$ the set of the residue classes induced by the congruence $n\equiv m$
 if and only if $n-m$ is a multiple of $q$ with $n,m\in\Z$.
Then we will prove that the dynamical system generated by $F,$
restricted to this set, is conjugated to an affine discrete
dynamical system generated by a map $h:\Z_M\to\Z_M$, where $h(i)= i
+u(c,\alpha),$ for some $u(c,\alpha)\in\Z_M$ that we also determine explicitly
in this paper, see Theorems~\ref{th:a},~\ref{th:b} and~\ref{th:c}.
Notice that, geometrically, $F$ acts as a rotation among the beads
of any necklace. A similar inter-tile dynamics' description in the context of dual polygonal billiards can be found in \cite{GuSi}, and  also in the context of piecewise linear maps \cite[Theorem 1 and Lemma 11]{GQ}.

Due to the above conjugation,  the dynamics on the interior of each
level set can be completely understood, see Theorems \ref{th:a},
\ref{th:b} and \ref{th:c}. Roughly speaking, for each map and for
each necklace  (set of tiles with the same energy level), there
exists a  certain number $k\in\{M,M/2\}\cap\N$, that depends
(explicitly) on the energy level, so that each tile is invariant by
$F^k$. Furthermore, \emph{on each tile, $F^k$ is a rotation of
order~$p$ around the center of the tile,} where $p\in\{2,4\}$ when
$\alpha=\pi/2$, or $p\in\{3,6\}$  when $\alpha\in\{\pi/3,2\pi/3\}$
and it is determined explicitly by the energy level. As a
consequence, on each tile there is a $k$-periodic point (the center)
and the rest of points are $kp$-periodic. The dynamics in the necklaces is, therefore, a discrete version of an epicyclic motion around a discrete deferent which is the locus of the centers of the tiles, \cite[p.~123]{Neu69}.

As we will see, the
dynamics on the boundaries of the tiles (edges and vertices)
requires a little bit more elaborated description.

\item As a consequence of the  above results and the study of the dynamics
on the boundary of the level sets, for each map, we easily
characterize the period of every point in terms of the value of its
associate first integral and obtain the global dynamics of the map.
For instance in Proposition \ref{p:algoritme1}  we present our
results in an algorithmic way when $\alpha=\pi/2.$  In particular,
the set of periods of the maps are presented.
\end{enumerate}

\section{Preliminaries and main results}\label{se:pmr}

The families of maps $F$ and $G$ given in \eqref{E:Grho} and
\eqref{e:normalform}, respectively,  are linearly conjugated because
$$F(x,y)=\left(Q^{-1}\cdot G\left(Q\cdot(x,y)^t\right)\right)^t,\quad \mbox{where}\quad Q=\left(\begin{array}{rr}
1&-\cos(\alpha)\\
0& \sin(\alpha)
\end{array}\right).$$

 In this paper we will,
therefore, work with the above normalized one-parameter family  of
maps $F.$  Notice that each map $F$ is bijective with inverse
$$
F^{-1}(x,y)= \left(\begin{array}{rr}
\cos(\alpha)&-\sin(\alpha)\\
\sin(\alpha)&\cos(\alpha)
\end{array}\right)\left(\begin{array}{l}
x\\
y
\end{array}\right) +\left(\begin{array}{l}
\operatorname{sign}(\sin \left( \alpha \right) x+\cos \left( \alpha \right) y)\\
0
\end{array}\right).
$$

Notice also that $F$ is discontinuous in the set $LC_0=\{(x,0):
x\in\R\}.$ We will consider the \emph{critical lines}
$LC_{-i}=\{(x,y)$ such that $F^i(x,y)\in LC_{0}\}$, and also the
\emph{critical set} $\mathcal{F}=\bigcup_{i\in\mathbb{N}} LC_{-i}$
formed by all the preimages of the critical line $LC_0$, where we
use the notation introduced by Mira et al. in \cite{MGBC96} (see
also \cite{AGST,BGM}). We call  the open set
$\mathcal{U}=\R^2\setminus\mathcal{F}$, the \emph{zero-free set}
because none of the orbits starting at point in $\mathcal{U}$
touches the discontinuity line $LC_0,$ where the second coordinate
of the points is zero.

Regarding the above conjugation, of course it is only defined in the
case $\sin(\alpha)\neq 0$ which corresponds with the cases $\rho\ne
\pm 2$. In the case $\alpha=0$ (resp. $\alpha=\pi$) the map $F$ (resp. $F^2$) has trivial dynamics of translation type, and do not correspond
to the initial family $G$ with $\rho=\pm2$.

For each $\alpha\in\{\pi/2,2\pi/3,\pi/3\}$ we introduce some
specific notations and also state our main results: Theorems
\ref{th:a}, \ref{th:b} and \ref{th:c}, respectively. Each one of
them will be proved in a different section. The results in these
theorems have the following structure: in (i) we characterize the
geometry of the critical set; in (ii) we state the existence of a
first  integral in the non-critical set and we characterize the
global dynamics in this set proving that is conjugated with the
composition of two rotations; in (iii) we establish the dynamics in
the critical set; in (iv) we characterize the set of periods of the
maps.

While statements (iv) are known in the literature, the geometric
description given in  statements (i)--(iii) is, as far as we know,
novel.

\subsection{The case $\alpha={\pi}/{2}$} When $\alpha={\pi}/{2}$, the map $F$ is the one in (\ref{E:Grho})
with $\rho=0$  and was studied in
\cite{ChaChe14}. Consider $\mathcal{F}_{\pi/2}$ the grid formed by
the straight lines $x=k$ and $y=\ell$, with $k,\ell\in \mathbb{Z}$.
This grid defines the square  Euclidean regular tiling
\cite{GS77,GS87}, also named \emph{quadrile}, see Figure
\ref{f:T90}. Each (open) tile is denoted by
$$T_{k,\ell}=\{(x,y);\, \mbox{such that }  k<x<k+1\,\mbox{ and }\ell<y<\ell+1\}.$$
The centers of each of these tiles are denoted by
$p_{k,\ell}=\left(k+{1}/{2},\ell+{1}/{2}\right).$ We also introduce
the set
$$\mathcal{U}_{\pi/2}= \bigcup_{(k,\ell)\in\Z^2}
T_{k,\ell}=\R^2\setminus\mathcal{F}_{\pi/2},$$ and the function
\begin{equation}\label{e:Vpi2}
V_{\pi/2}(x,y)=\max \left(  \left|
\operatorname{E}(x)+\operatorname{E}(y)+1 \right| -1, \left|
\operatorname{E}(x)- \operatorname{E}(y) \right|  \right),
\end{equation}
where  $\operatorname{E}(z)=\lfloor z\rfloor$ is the \emph{floor
function} of $z\in\R$ that recall gives as output the greatest
integer less than or equal to $z.$ We also define
 $V_{k,\ell}=V_{\pi/2}(p_{k,\ell})$ and denote $\N_0=\mathbb{N}\cup\{0\}.$
We prove:

\begin{teoa}\label{th:a} Consider the discrete dynamical system (DDS) generated by the
map $F$ given in \eqref{e:normalform} with $\alpha=\pi/2,$
$F(x,y)=(y,-x+\operatorname{sign}(y)).$ Then:
\begin{enumerate}[(i)]

\item Its critical set is  $\mathcal{F}=\mathcal{F}_{\pi/2}.$
\item The function
$V=V_{\pi/2}$ is a first integral of $F$ on the free-zero set
$\mathcal{U}=\mathcal{U}_{\pi/2}.$ Each level set $\{V(x,y)=c\}\cap\mathcal{U},$
with $c\in\N_0,$ is a necklace formed by $4c+2$ squares,
see Figure \ref{f:T90}. If we identify each square with a
point (for instance the center), the DDS restricted to this set is conjugated with the DDS
generated by the map $h:\Z_{4c+2}\to \Z_{4c+2},$ $h(i)=i+c.$ As a
consequence, when $c$ is odd (resp. even), each square in this level
set is invariant by $F^{4c+2}$ (resp. $F^{2c+1}$) and restricted to
this square, $F^{4c+2}$ (resp. $F^{2c+1}$) is a rotation of order
$2$ (resp. $4$), around the center  of the tile. In particular, all
points but the center in each of these tiles have period $8c+4.$

\item All orbits with initial condition on $\mathcal{F}$ are
$(8n+4)$-periodic for some $n\in\N_0,$ see Theorem \ref{t:non
zero-fre epi2} for more details.

\item The map $F$  is
pointwise periodic. Furthermore, its set of periods is
$$
\operatorname{Per}(F)=\left\{4n+1;\,8n+4;\,\mbox{ and }8n+6\mbox{
for all }n\in\N_0\right\}.
$$
\end{enumerate}
\end{teoa}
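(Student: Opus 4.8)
The plan is to prove the four parts of Theorem~\ref{th:a} in a logical order that lets later parts build on earlier ones, treating part~(ii) as the technical heart and deriving (iii) and (iv) from it together with the boundary analysis.

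\medskip

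\textbf{Part (i): the critical set.} First I would show that $\mathcal{F}=\mathcal{F}_{\pi/2}$, the integer grid. For $\alpha=\pi/2$ the map is $F(x,y)=(y,-x+\operatorname{sign}(y))$, which is piecewise a composition of an integer translation with the rational rotation $(x,y)\mapsto(y,-x)$ of order $4$. The key observation is that $F$ maps the grid $\mathcal{F}_{\pi/2}$ into itself: a point with $x\in\Z$ or $y\in\Z$ is sent to a point with integer coordinate, because the $\pm1$ shift and the coordinate swap preserve integrality. Conversely, I would argue that the complement $\mathcal{U}_{\pi/2}$ is forward invariant, so no orbit starting in $\mathcal{U}_{\pi/2}$ ever reaches $LC_0=\{y=0\}$. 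Since $LC_0\subset\mathcal{F}_{\pi/2}$ and $\mathcal{F}=\bigcup_i LC_{-i}$, this gives $\mathcal{F}\subseteq\mathcal{F}_{\pi/2}$; combined with invariance of the grid this forces equality. The cleanest route is to verify directly that $F$ permutes the open tiles $T_{k,\ell}$ among themselves, which simultaneously proves (i) and sets up (ii).

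\medskip

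\textbf{Part (ii): the first integral and the combinatorial dynamics.} This is the main step. I would first verify that $V_{\pi/2}$ is constant along orbits in $\mathcal{U}_{\pi/2}$, i.e. $V_{\pi/2}\circ F=V_{\pi/2}$ on $\mathcal{U}_{\pi/2}$. Since $V_{\pi/2}$ only depends on $(\operatorname{E}(x),\operatorname{E}(y))=(k,\ell)$, it suffices to track how $F$ acts on the integer pair $(k,\ell)$ indexing tiles, reducing the whole verification to a finite case analysis on the sign of $\ell$ (equivalently, which half-plane the tile lies in), and then checking that the expression $\max(|k+\ell+1|-1,|k-\ell|)$ is preserved by the induced integer map on $(k,\ell)$. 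Next I would identify the level set $\{V_{\pi/2}=c\}\cap\mathcal{U}_{\pi/2}$ as exactly the $4c+2$ tiles whose centers satisfy $V_{k,\ell}=c$, and check geometrically that they form a single closed necklace (a cyclically ordered loop of tiles, each adjacent to the next). Labeling the beads cyclically by $\Z_{4c+2}$, the induced permutation of tiles is a shift, and I would pin down the shift amount as $+c$ by computing the image of one representative tile and reading off its new index. This yields the conjugacy with $h(i)=i+c$ on $\Z_{4c+2}$.

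\medskip

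\textbf{From the conjugacy to periods, and parts (iii)--(iv).} Once $F$ acts as $i\mapsto i+c$ on $\Z_{4c+2}$, the number of steps to return a tile to itself is $k=(4c+2)/\gcd(c,4c+2)$, and the parity analysis of $\gcd(c,2)$ gives the stated dichotomy $k\in\{4c+2,2c+1\}$ according to whether $c$ is odd or even. To analyze the return map $F^k$ \emph{on} a fixed tile, I would use that $F^k$ maps the open square $T_{k,\ell}$ affinely onto itself fixing its center $p_{k,\ell}$; since the linear part of $F$ is an orthogonal rotation by $\pi/2$, the return map is a rotation of the square about its center by an angle that is a multiple of $\pi/2$, and I would determine its order ($2$ or $4$) by tracking the accumulated rotation $k\cdot(\pi/2) \pmod{2\pi}$. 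This gives the periods $8c+4$ for non-center points and shows the center is $k$-periodic, completing (ii). For part (iii), the orbits starting on $\mathcal{F}$ must be handled separately since $V_{\pi/2}$ and the tiling argument break down on the grid; here I would invoke the finer boundary analysis referenced as Theorem~\ref{t:non zero-fre epi2}, tracking how edge and vertex points move along the grid under $F$ and showing each such orbit closes up with period $8n+4$. Finally, part~(iv) is assembled by collecting all the periods produced in (ii) and (iii): the zero-free interior points contribute periods $8c+4$ (with $c=2n$ or $c=2n+1$ giving both $8n+4$-type and the rotation-order-$4$ cases) and the tile centers together with the grid orbits contribute the families $4n+1$, $8n+4$, $8n+6$; it remains to verify no other periods occur, which follows because every point lies either in $\mathcal{U}_{\pi/2}$ (covered by the conjugacy) or in $\mathcal{F}$ (covered by (iii)).

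\medskip

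\textbf{Main obstacle.} The hardest part is the bookkeeping in (ii): proving that the level set is a single connected necklace of exactly $4c+2$ tiles in a definite cyclic order, and then correctly computing the shift $+c$ and the rotation order of the return map. The sign function makes $F$ behave differently across $y=0$, so the necklace crosses the discontinuity line and the cyclic labeling must be chosen so that the induced map is genuinely the clean shift $i\mapsto i+c$; getting the orientation and the crossing of $LC_0$ consistent is where the real care is needed.
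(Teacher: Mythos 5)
Your outline of item (ii) follows essentially the same route as the paper: locate the $4c+2$ centers on the level set, show $F$ shifts them by $+c$ in the cyclic order, deduce the first integral from the fact that $F$ maps tiles to tiles isometrically, and identify the return map on a tile as the power $A^{k}$ of the rotation by $\pi/2$ plus a translation, whence a rotation of order $4$ or $2$ about the center. That part is sound.

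The genuine gap is in item (i). You correctly get the inclusion $\mathcal{F}\subseteq\mathcal{F}_{\pi/2}$ from the forward invariance of $\mathcal{U}_{\pi/2}$ (equivalently, from $F$ permuting the open tiles). But the theorem asserts an \emph{equality}, and the reverse inclusion $\mathcal{F}_{\pi/2}\subseteq\mathcal{F}$ — every point of the integer grid is an iterated preimage of $LC_0$ — does not follow from ``invariance of the grid,'' which is how you close the argument. A priori a grid point could have an orbit that stays on the grid forever without its second coordinate ever vanishing, and nothing in your proposal rules this out. The paper needs a separate argument here: for a point $p$ on a vertical edge between $T_{k-1,\ell}$ and $T_{k,\ell}$, it takes the segment joining the two adjacent centers, uses that those centers have \emph{different} infinite itineraries (which comes out of the necklace dynamics of item (ii)), and invokes the convexity Lemma~\ref{convex} together with Lemma~\ref{l:itineraris} to produce an iterate $j$ and a point of the segment landing on $LC_0$, which must be $p$ itself; horizontal edges are reduced to vertical ones after one iterate. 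Without something of this kind your item (i) only proves one inclusion. A secondary, lesser issue: your item (iii) is essentially deferred to ``the finer boundary analysis referenced as Theorem~\ref{t:non zero-fre epi2}'' rather than argued; the actual content there (the invariant family of perfect squares for $c$ even, and the tracking of the horizontal/vertical edges through the two exceptional squares meeting $LC_0$ for $c$ odd) is nontrivial and is also what feeds the periods $8n+4$ into item (iv).
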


\begin{figure}[H]
\centerline{\includegraphics[scale=0.7]{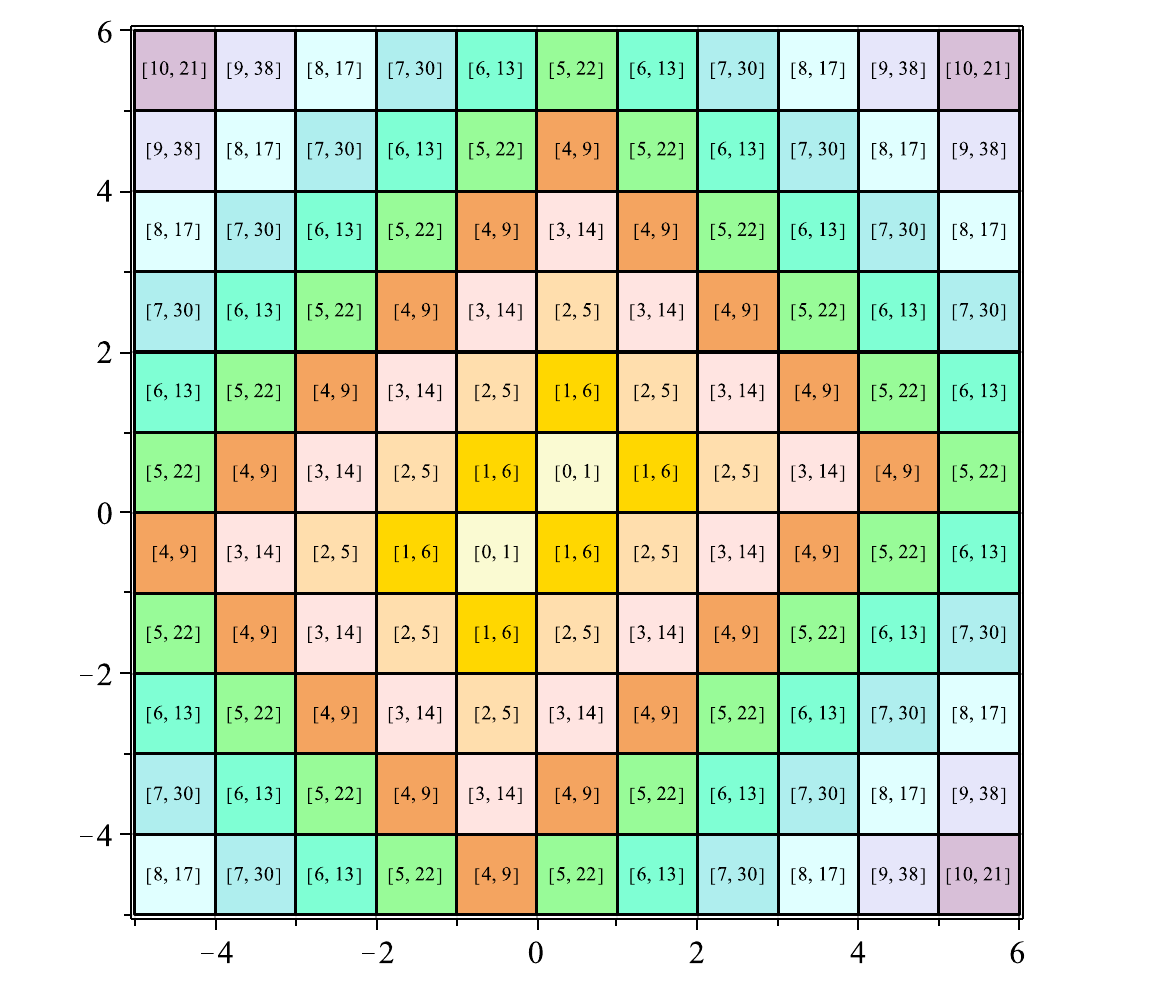}}
\begin{center}

    \caption{Level sets of the first integral $V$ of $F$ for $\alpha={\pi}/{2}$,
     given in \eqref{e:Vpi2}. In each tile $T_{k,\ell}$, the level $V_{k,\ell}$ and the
period of the center $p_{k,\ell}$  are indicated, between brackets.
The other points in the tile have period
$8V_{k,\ell}+4$.}\label{f:T90}
    \end{center}
\end{figure}~
Item $(iv)$ was already proved in \cite{ChaChe14}. All the geometric
description of the dynamics of $F$ given in the other items is new.

Observe that the statement (ii) in the above result can be formalized in the following way: the dynamics of $F$ on each necklace $\{V=c\}\cap\mathcal{U}$
with $c\in\N_0,$ is conjugate with the dynamics of the map
$$
\begin{array}{lccc}
\varphi:&\mathbb{Z}_{4c+2}\times\mathbb{Z}_{q}&\longrightarrow& \mathbb{Z}_{4c+2}\times\mathbb{Z}_{q}\\
&(i,j)&\longrightarrow& (i+c,j+1)
\end{array}
$$ where $q=2$ if $c$ is odd, and $q=4$ if $c$ is even. This map can be seen as the product of two finite order rotations, its first component gives the dynamics on the discrete deferent formed by the set of centers of tiles, and the second component gives the dynamics on a epicycle. A similar situation is described in statements (ii) of Theorems \ref{th:b}
and \ref{th:c}.

Also notice that a simple check shows that the function $V$ is not a first integral
of $F$ on the whole plane, since the relation $V(F)=V$ is not
satisfied for some points in $\mathcal{F}=\R^2\setminus\mathcal{U}$.

As a consequence of the above theorem we can easily give  a simple
algorithm to know the period of each orbit in terms of its initial
condition. Recall that given a point $(x,y)\in\R^2$,
$k=\operatorname{E}(x)$, $\ell=\operatorname{E}(y)$ and
$V_{k,\ell}=V_{\pi/2}(p_{k,\ell}).$ For the forthcoming cases
$\alpha\in\{2\pi/3,\pi/3\},$ from our results a more complicated
algorithm could be obtained, but for the sake of brevity,  we do
not detail it.


\begin{propo}\label{p:algoritme1}
Any point $(x,y)\in\R^2$ is a $p$-periodic point of $F$, where:
\begin{enumerate}[(a)]
\item When $x\notin \Z$ and $y\notin\Z$ and, moreover, either $x-k\neq{1}/{2}$ or $y-\ell\neq{1}/{2}$, then
$p=8V_{k,\ell}+2.$ When  $x-k={1}/{2}$ and $y-\ell={1}/{2}$, then
$p=2V_{k,\ell}+1$ if $V_{k,\ell}$ is even,  and $p=4V_{k,\ell}+2$ if
$V_{k,\ell}$ is odd.

\item When $x\in\Z$ and $y\notin\Z$, if $k$ is even,   $p=8V_{k,\ell}+4$ and  if $k$ is
odd,  $p=8V_{k-1,\ell}+4.$
\item When $x\notin\Z$ and $y\in\Z$, if $\ell$ is even,   $p=8V_{k,\ell}+4$ and if $\ell$ is
odd, $p=8V_{k,\ell-1}+4.$
\item If $x\in\Z$ and $y\in\Z$, then:
\begin{enumerate}[(i)]
\item When $k$ is even,   $p=8V_{k,\ell}+4$ if $\ell$ is even, and
$p=8V_{k,\ell-1}+4$ if $\ell$ is odd.
\item When $k$ is odd,   $p=8V_{k-1,\ell}+4$ if $\ell$ is even, and $p=8V_{k-1,\ell-1}+4$ if $\ell$ is odd.
\end{enumerate}
\end{enumerate}
\end{propo}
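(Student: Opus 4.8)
The plan is to split $\R^2$ into the zero-free set $\mathcal{U}=\mathcal{U}_{\pi/2}$ (the open tiles) and the critical skeleton $\mathcal{F}=\mathcal{F}_{\pi/2}$ (edges and vertices), reading the interior periods directly off Theorem~\ref{th:a}(ii) and computing the boundary periods by a separate orbit analysis on $\mathcal{F}$. Case~(a) is exactly the interior situation of Theorem~\ref{th:a}(ii): writing $c=V_{k,\ell}$, the conjugacy with $h(i)=i+c$ on $\Z_{4c+2}$ shows that the first iterate returning the tile $T_{k,\ell}$ to itself is $F^{2c+1}$ when $c$ is even and $F^{4c+2}$ when $c$ is odd, since the length of the orbit of a tile is $(4c+2)/\gcd(c,4c+2)$, and on the returned tile this iterate is a rotation of order $4$, respectively $2$, about $p_{k,\ell}$. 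Hence the center has period equal to that return time, namely $2V_{k,\ell}+1$ or $4V_{k,\ell}+2$ according to the parity of $V_{k,\ell}$, as stated; every non-central point has period the return time times the rotation order, i.e. the common tile period recorded in Theorem~\ref{th:a}(ii). This settles~(a) with no further work.

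For the boundary I would first record the only symmetry that survives the discontinuity, namely the reversor $T(x,y)=(y,x)$: a direct check on $F(x,y)=(y,-x+\operatorname{sign}(y))$ and on $F^{-1}$ gives $T\circ F=F^{-1}\circ T$ \emph{everywhere}, including on $LC_0$, because the discontinuity of $F$ lies on $\{y=0\}$ while that of $F^{-1}$ lies on $\{x=0\}$, and $T$ swaps these two lines. Since $T$ is an involution conjugating $F$ to $F^{-1}$ it preserves periods, and as $V_{k,\ell}=V_{\ell,k}$, applying $T$ turns a horizontal-edge point into a vertical-edge one and converts case~(c) verbatim into case~(b). I would also check, straight from the formula, that $F$ maps $\Z^2$ into $\Z^2$ and sends each open vertical edge to an open horizontal edge and conversely, so it permutes edges and vertices; thus each boundary orbit is a finite cycle of edge points, or of vertices, whose length is of the form $8n+4$ by Theorem~\ref{th:a}(iii). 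It therefore only remains to identify $n$.

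The core step is the vertical-edge case~(b). The key structural fact is that $F$ is a piecewise isometry: its linear part $(x,y)\mapsto(y,-x)$ is a rotation by $-\pi/2$, so two points lying on the same side of $LC_0$ at every step keep a constant distance. Fix $e=\{x=k,\ \ell<y<\ell+1\}$ and compare the orbit of a point of $e$ with that of an interior neighbor $(k\pm\epsilon,y_0)$ in $T_{k,\ell}$ or $T_{k-1,\ell}$. Since $F$ preserves $\mathcal{U}$ and permutes the open tiles, the neighbor's orbit never meets $\mathcal{F}$; as the offset vector starts horizontal and is rotated by $-\pi/2$ at each step, at every instant when the edge orbit lands on $LC_0$ the neighbor's offset is therefore \emph{vertical}, of definite sign. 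The two orbits then keep distance $\epsilon$, and hence share their period, unless at such an instant the offset points into $\{y<0\}$, the only configuration changing the value of $\operatorname{sign}$. Thus a side \emph{shadows} $e$ precisely when none of the crossing times falls in its downward phase; letting $\epsilon\to0$ and invoking Theorem~\ref{th:a}(ii) forces the period of $e$ to be the generic period $8V+4$ of the shadowing tile. What remains is to show that the shadowing side is $T_{k,\ell}$ when $k$ is even and $T_{k-1,\ell}$ when $k$ is odd.

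The main obstacle is exactly this parity selection, which amounts to two facts about the crossing times of the edge orbit: that they all share one odd residue modulo $4$ (so that exactly one of the two downward phases is avoided, and hence exactly one side shadows), and that this residue is governed by $k\bmod 2$. I would prove both by tracking the unique integer coordinate along the orbit: each step of $F$ moves the integer slot between the two coordinates and flips its sign, so the pair consisting of the rotation phase and the parity of the integer slot is an invariant amenable to induction, and I expect it to pin the common crossing residue to $k\bmod 2$; this is the one genuinely computational ingredient. Case~(d) is the same argument for a vertex $(k,\ell)\in\Z^2$, whose orbit remains in $\Z^2$: the four incident tiles correspond to the four diagonal offsets, and requiring that the offset never point into $\{y<0\}$ at a crossing selects one of them according to the pair $(k\bmod 2,\ell\bmod 2)$, giving sub-cases~(i)--(ii). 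In every boundary case, minimality, that the period equals $8V+4$ and not a proper divisor, follows as for the edges by combining the shadowing with the $8n+4$ form from Theorem~\ref{th:a}(iii).
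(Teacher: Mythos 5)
Your strategy for the boundary cases is genuinely different from the paper's. The paper proves (b)--(d) through a global combinatorial argument (Theorem \ref{t:non zero-fre epi2} and Corollary \ref{t:teo4}): at even levels the odd-labelled closed squares (the ``perfect squares'') form an $F$-invariant family that never visits the two problematic squares $Q_{2c+2}$, $Q_{4c+2}$ whose top edge lies on $LC_0$, so their whole boundaries, vertices included, inherit the interior period by continuity; at odd levels one tracks which edges of $Q_j$ land on the top edges of those two squares and shows the trimmed squares $\widetilde{Q}_j$ are invariant. You instead argue locally, comparing a boundary point with an interior neighbour at offset $\pm\epsilon$ and using that the linear part of $F$ rotates the offset by $-\pi/2$ each step, so that the governing tile is the one whose offset never points into $\{y<0\}$ at the crossing times. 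That mechanism is sound, and the reversor $T(x,y)=(y,x)$, which does satisfy $T\circ F=F^{-1}\circ T$ everywhere since $F^{-1}(x,y)=(\operatorname{sign}(x)-y,x)$, is a clean way to deduce (c) from (b); the paper instead just applies one iterate of $F$ to turn a horizontal edge into a vertical one.

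There is, however, a genuine gap at exactly the step that produces the case distinctions in (b)--(d): you must prove that all times $n$ at which the orbit of a fixed vertical edge meets $LC_0$ lie in a single residue class modulo $4$, equal to $3$ when $k$ is even and $1$ when $k$ is odd (and the analogous statement for vertices in terms of $(k\bmod 2,\ell\bmod 2)$). You correctly isolate this as ``the one genuinely computational ingredient'' but only conjecture it (``I expect it to pin\dots''); without it your argument shows that a boundary point inherits the period of \emph{some} adjacent tile, not \emph{which} one. The invariant you propose does close the gap: writing the orbit of $(k,y_0)$ as $(a_n,b_n)$, the integer coordinate sits in the second slot exactly when $n$ is odd and its parity is $k$ for $n\equiv 0,3$ and $k+1$ for $n\equiv 1,2 \pmod 4$, while a crossing forces that entry to be $0$; hence crossings occur only at $n\equiv 3$ for $k$ even and $n\equiv 1$ for $k$ odd, selecting $T_{k,\ell}$, respectively $T_{k-1,\ell}$, as required. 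Two further points: minimality of the period does not follow from ``divides $8V+4$ and has the form $8m+4$'' (note $12\mid 36$); it follows because shadowing identifies $F^n$ on the edge with the affine map $F^n|_{T}$, whose iterates before $8V+4$ either move the tile to another tile or rotate it nontrivially about its centre. Finally, the value $8V_{k,\ell}+2$ in item (a) of the statement must be a misprint for $8V_{k,\ell}+4$ (it is not even in $\operatorname{Per}(F)$), and your reading via Theorem \ref{th:a}(ii) is the consistent one.
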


The statement (d) in the above result is a consequence of Theorem \ref{t:non zero-fre epi2}. To obtain the result we will identify some tiles, that we will call perfect squares, such that their boundaries  (including their edges and all the vertices in $\mathcal{F}$) avoid the discontinuity effects and, therefore, the points on the boundary of such a tile have the same periodic behavior as the interior points, except their centers. To study the periodicity in the rest of the edges (without vertices) we will associate them with an appropriate tile, so that the points in the edge follow the periodic behavior of the interior points. See Section \ref{ss:dyn-non-free} for more details.

\subsection{The case $\alpha={2\pi}/{3}$}\label{ss:23}

In this case, the map $F$ in \eqref{e:normalform} is conjugate with
the map $G$ in \eqref{E:Grho} with $\rho=1$,  which was studied in \cite{ChaWanChe12}.

We define  $\mathcal{F}_{2\pi/3}$ as the grid formed by the straight
lines $y=\sqrt{3}(x-2k),$ $y=\sqrt{3}\ell$ and
$y=-\sqrt{3}(x-2m-1)$, with $k,\ell,m\in \mathbb{Z}$ and  call
${\mathcal{U}_{2\pi/3}}=\R^2\setminus \mathcal{F}_{2\pi/3}.$ Notice
that ${\mathcal{U}_{2\pi/3}}$ is the (open) trihexagonal Euclidean
uniform tiling (the tessellation 3.6.3.6 in the notation
of~\cite{GS87}), see Figure \ref{f:T120}. In fact, each tile in
${\mathcal{U}_{2\pi/3}}$ is defined by
$$\begin{array}{rl}
T_{k,\ell,m}=&\big\{(x,y),\mbox{ such that }
\sqrt{3}(x-2k-2)<y<\sqrt{3}(x-2k),
\sqrt{3}\ell<y<\sqrt{3}(\ell+1),\\
&   \mbox{ and } -\sqrt{3}(x-2m+1)<y<-\sqrt{3}(x-2m-1)\big\},
\end{array}
$$ with  $m\in\{k+\ell,k+\ell+1,k+\ell+2\},$ where
$k=B(x,y),$ $\ell=C(y)$  and $m=D(x,y),$ being
\[B(x,y)= \operatorname{E} \left( {(3x-\sqrt{3} y)}/{6}\right),\,
C(y)=\operatorname{E} \left({\sqrt{3} y}/{3}\right)\mbox{ and }
D(x,y)= \operatorname{E} \left( {(3x+\sqrt{3} y+3)}/{6}\right).\]
Moreover,
\begin{itemize}
\item   The tile
$T_{k,\ell,k+\ell+1}$ is a regular hexagon  and  its geometric
center (simply center, from now on) is the point
$p_{k,\ell}=\left(2k+\ell+{3}/{2},\sqrt{3}(\ell+{1}/{2})\right);$

\item The tiles $T_{k,\ell,k+\ell}$ and $T_{k,\ell,k+\ell+2}$ are
equilateral triangles  whose respective centers  are
$q_{k,\ell}=\left(2k+\ell+{1}/{2},\sqrt{3}(\ell+{1}/{6})\right)$ and
$r_{k,\ell}=\left(2k+\ell+{5}/{2},\sqrt{3}(\ell+{5}/{6})\right);$
\end{itemize}
and the adherence of the union of the three tiles is a parallelogram
whose sides are $y=\sqrt{3}\ell\,,y=\sqrt{3}(\ell+1)\,,\,y=\sqrt{3}
(x-2k)$ and $y=\sqrt{3}(x-2k-2),$ see Figure \ref{f:Tiles} and Lemma
\ref{l:klm} for more details. Finally, we introduce the function
\begin{multline}\label{e:V2pi3}
V_{2\pi/3}(x,y)=\max \big(\left| B(x,y)-C(y)+ D(x,y)
\right|,\\\left| B(x,y)+C(y)+ D(x,y) +1 \right| -1 , \left| -
B(x,y)+C(y)+ D(x,y) \right| \big).
\end{multline}
Observe that its level sets are discrete and $
\operatorname{Image}(V_{2\pi/3})=\mathbb{N}_0.$ Clearly, $V_{2\pi/3}$
is constant on each tile $T_{k,\ell,m}$ and we denote its value as
\begin{equation}\label{e:Vklm}
V_{k,\ell,m}=\max \left(|k-\ell+m|,|k+\ell+m+1|-1,|-k+\ell+m|
\right).
\end{equation}

\begin{teoa}\label{th:b} Consider the discrete dynamical system  generated by the
map $F$ given in \eqref{e:normalform} with $\alpha=2\pi/3.$ Then:
\begin{enumerate}[(i)]

\item Its critical set is  $\mathcal{F}=\mathcal{F}_{2\pi/3}.$
\item The function
$V=V_{2\pi/3}$ is a first integral of $F$ on the free-zero set
$\mathcal{U}=\mathcal{U}_{2\pi/3}=\R^2\setminus
\mathcal{F}_{2\pi/3}.$
\begin{enumerate}[(a)]
\item  Each level set $\{V(x,y)=c\},$ with $c\in2\N_0$  even, in $\mathcal U$ is a
necklace formed by $6c+2$ triangles, see Figure \ref{f:T120}. If we
identify each triangle with a point (the center, for instance), the DDS restricted to this
set is conjugated with the DDS generated by the map $h:\Z_{6c+2}\to
\Z_{6c+2},$ $h(i)=i+2c.$ As a consequence, each tile  in this level
set is invariant by $F^{3c+1}$  and restricted to this triangle,
$F^{3c+1}$  is a rotation of order $3$ around the center of the
tile.
 In particular, all
points but the center in each of these tiles have period $9c+3.$

\item Each level set $\{V(x,y)=c\},$ with $c\in2\N_0+1$ odd, in $\mathcal U$ is a
necklace formed by $3c+1$ hexagons, see Figure \ref{f:T120}. If we
identify each hexagon with a point, the DDS restricted to this
set is conjugated with the DDS generated by the map $h:\Z_{3c+1}\to
\Z_{3c+1},$ $h(i)=i+c.$ As a consequence, each tile  in this level
set is invariant by $F^{3c+1}$  and restricted to this hexagon,
$F^{3c+1}$  is a rotation of order $3$ around the center of the
tile.
 In particular, all
points but the center in each of these tiles have period $9c+3.$
\end{enumerate}

\item All orbits with initial condition on $\mathcal{F}$ are periodic with period $9n+3$ for some
$n\in\N_0.$

\item The map $F$  is
pointwise periodic. Furthermore, its set of periods is
$$
\operatorname{Per}(F)=\left\{3n+1\mbox{ and } 9n+3\mbox{ for all
}n\in\N_0\right\}.
$$
\end{enumerate}
\end{teoa}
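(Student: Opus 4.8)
The plan is to prove Theorem~\ref{th:b} by reducing the global dynamics of $F$ (for $\alpha=2\pi/3$) to a completely understood combinatorial model on the trihexagonal tessellation $\mathcal{U}_{2\pi/3}$, and then handling the critical set separately. First I would establish item~(i), the identification $\mathcal{F}=\mathcal{F}_{2\pi/3}$, by a direct but careful computation: starting from the discontinuity line $LC_0=\{y=0\}$, I compute the successive preimages $LC_{-i}=\{F^i(x,y)\in LC_0\}$ and show that these preimages are exactly the three families of lines $y=\sqrt{3}(x-2k)$, $y=\sqrt{3}\ell$, $y=-\sqrt{3}(x-2m-1)$. The key observation is that since $F$ acts as a rotation by $2\pi/3$ composed with a horizontal integer shift depending on $\operatorname{sign}(y)$, the orbit of the line $\{y=0\}$ under $F^{-1}$ sweeps out exactly these three directions at $60^{\circ}$ increments, and the integer shifts force the lattice spacing. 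The union being a \emph{closed} grid of positive-codimension sets guarantees $\mathcal{U}_{2\pi/3}$ is open and dense, as claimed.

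Next, for item~(ii), the heart of the argument, I would verify that $V_{2\pi/3}\circ F=V_{2\pi/3}$ on $\mathcal{U}_{2\pi/3}$. The cleanest route is to compute how the integer-valued coordinates $B(x,y),C(y),D(x,y)$ transform under $F$. Because $F$ is piecewise affine with integer translation part and these three functions are floor functions adapted to the three line directions, on each tile $F$ permutes the triple $(B,C,D)$ in a controlled way: I would show that applying $F$ sends $(B,C,D)\mapsto(B',C',D')$ where the new triple is obtained by a fixed affine substitution (the same on the whole zero-free set, since orbits never cross $LC_0$), and then check that the three expressions $|B-C+D|$, $|B+C+D+1|-1$, $|-B+C+D|$ inside the outer $\max$ are merely permuted and/or reflected by this substitution, leaving $V_{2\pi/3}$ invariant. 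This is the step I expect to be the main obstacle: the sign function makes $F$ genuinely piecewise-defined, so I must confirm that the relevant affine branch is constant along any zero-free orbit (which is exactly what $\mathcal{U}=\mathcal{U}_{2\pi/3}$ guarantees) and track the three floor functions through the rotation without off-by-one errors at the tile boundaries.

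With invariance established, I would then describe each level set combinatorially. Using Lemma~\ref{l:klm} (the description of tiles via $(k,\ell,m)$ and the parallelogram structure), I count the tiles on $\{V=c\}$: for $c$ even the level set consists of $6c+2$ triangles, and for $c$ odd of $3c+1$ hexagons. The counting follows from analysing the integer solutions of $V_{k,\ell,m}=c$ in \eqref{e:Vklm}, i.e. the lattice points on the boundary of the region $\max(|k-\ell+m|,|k+\ell+m+1|-1,|-k+\ell+m|)=c$, which is a hexagonal ``annulus''; the parity of $c$ determines whether the extremal tiles are triangles or hexagons. I would then order these tiles cyclically (the necklace), show that $F$ advances a tile to its neighbour a fixed number of steps around the necklace, and identify the induced permutation with $h(i)=i+2c$ on $\Z_{6c+2}$ (even case) or $h(i)=i+c$ on $\Z_{3c+1}$ (odd case). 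Since $\gcd$ computations give that $F^{3c+1}$ fixes each tile setwise, and on a single tile $F^{3c+1}$ is an affine map of finite order fixing the center, I conclude it is a rotation of order $3$, yielding period $9c+3$ for non-central points and $3c+1$ for centers.

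Finally, items~(iii) and~(iv) follow by assembling the pieces. For the critical set, I would argue that any point of $\mathcal{F}_{2\pi/3}$ lies on an edge or vertex shared by adjacent tiles; tracking its orbit along the boundaries (using that $F$ maps the grid $\mathcal{F}_{2\pi/3}$ to itself by item~(i)) shows each such orbit is periodic with period of the form $9n+3$, where the relevant $n$ is read off from the energy levels of the adjacent tiles. Item~(iv) then amounts to collecting all periods: the non-critical tiles contribute periods $3c+1$ (centers) and $9c+3$ (other points), the critical orbits contribute $9n+3$, and one checks these exhaust exactly $\{3n+1,\,9n+3:n\in\N_0\}$, matching the set already computed in \cite{ChaWanChe12}. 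The pointwise-periodicity is immediate since every point is covered by one of these cases, while non-periodicity of $F$ as a whole follows because the periods $9c+3$ are unbounded.
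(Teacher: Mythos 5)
Your overall architecture (tiles of the trihexagonal tessellation, necklaces of centers, conjugation to a shift on $\Z_M$, linear part $A^{3c+1}=A$ giving an order-$3$ rotation on each tile) matches the paper's, and items (i), (ii) and (iv) of your plan are essentially viable; the paper proves (i) via an itinerary/convexity argument (Lemmas \ref{convex} and the analogue of Lemma \ref{l:itineraris}) rather than by computing the preimages $LC_{-i}$ directly, and proves the invariance of $V$ by locating the centers on six explicit lines and checking the action of $F_\pm$ side by side (Lemma \ref{l:hexagons}, Propositions \ref{p:zmodul2} and \ref{p:centretriangles}, Corollary \ref{co:nou}) rather than by tracking the transformation of $(B,C,D)$. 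One caveat on your route to $V\circ F=V$: there is no single ``fixed affine substitution'' for $(B,C,D)$ valid on the whole zero-free set, because $F$ has two affine branches; the substitution on $H_+$ differs from the one on $H_-$, so you must verify the invariance of the three expressions under each branch separately. That is only a two-case check, so it is recoverable.

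The genuine gap is in item (iii). You propose to track a boundary point ``along the boundaries'' and read its period off ``from the energy levels of the adjacent tiles,'' but this does not confront the actual difficulty: $F$ is discontinuous on $LC_0$, so when the closure of a tile meets $y=0$ the boundary of that tile does \emph{not} inherit the dynamics of its interior by continuity --- on the top edge of such a tile $F=F_+$ while on the rest $F=F_-$. Moreover a boundary point lies in the closures of several tiles with \emph{different} energy levels (hence different candidate periods $9n+3$ for different $n$), so ``the relevant $n$'' is not determined without deciding which adjacent tile's itinerary the point actually follows. The paper resolves this by singling out the \emph{perfect triangles} (the upward-pointing ones, whose images only meet $y=0$ along a bottom edge where $F=F_+$ agrees with the limit from the interior), showing their full closed boundaries are $(9c+3)$-periodic, then splitting each hexagon boundary into perfect and non-perfect alternate edges and proving $F(\widetilde H_i)=\widetilde H_{i+c}$ for the reduced hexagons, and finally observing that every remaining edge and every vertex already belongs to a perfect triangle or to a treated hexagon. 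Without some version of this bookkeeping your argument for item (iii) does not go through, and consequently the assembly in item (iv) is also incomplete.
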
~

Similarly to Theorem \ref{th:a}, item $(iv)$ was already known,
see \cite{ChaWanChe12}. Again, all the geometric description of the
dynamics of $F$ given in the other items is new.

From statement (ii), on each necklace, $F$ is conjugate with the product of rotations
$\varphi:\mathbb{Z}_{6c+2}\times\mathbb{Z}_3\righttoleftarrow$  given by $\varphi(i,j)=(i+2c,j+1)$ when $c$ is even, and $\varphi:\mathbb{Z}_{3c+1}\times\mathbb{Z}_3\righttoleftarrow$  given by $\varphi(i,j)=(i+c,j+1)$ when $c$ is odd.

\begin{figure}[H]
\centerline{\includegraphics[scale=0.62]{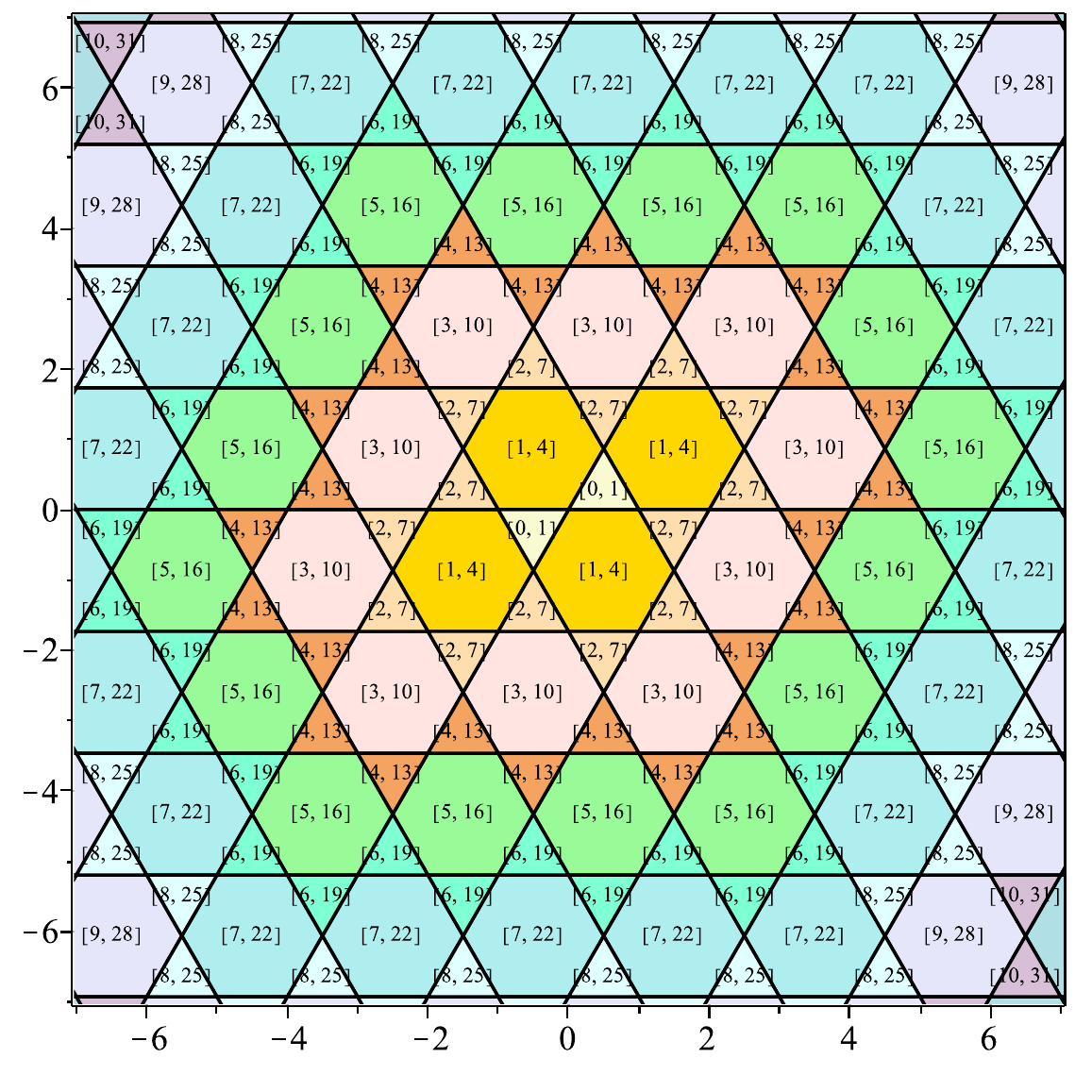}}
    \caption{Level sets of the first integral $V$ given in \eqref{e:V2pi3}. In each tile the level and the period of the
    center are indicated respectively between brackets.}\label{f:T120}
\end{figure}

\subsection{The case $\alpha={\pi}/{3}$}

In this last case, the map $F$ in \eqref{e:normalform} is conjugate
with the map $G$ in \eqref{E:Grho} with $\rho=-1$,  which was
studied in \cite{ChaChe13}.

We consider $\mathcal{F}_{\pi/3}$ the grid formed by the straight
lines $y=\sqrt{3}(x-2k-1)$; $y=\sqrt{3}\ell$ and
$y=-\sqrt{3}(x-2m)$, with $k,\ell,m\in \mathbb{Z}$ that, again, form
a trihexagonal Euclidean uniform tiling which is a translation of
the one that appeared in the previous case $\alpha={2\pi}/{3}$, see
Figure \ref{f:T60}. The interior of each tile is defined by
$$\begin{array}{rl}
T_{k,\ell,m}=&\big\{(x,y),\mbox{ such that }
\sqrt{3}(x-2k-1)<y<\sqrt{3}(x-2k+1),
\sqrt{3}\ell<y<\sqrt{3}(\ell+1),\\
&   \mbox{ and } -\sqrt{3}(x-2m)<y<-\sqrt{3}(x-2m-2)\big\}.
\end{array}
$$

As before, we call ${\mathcal {U}_{\pi/3}}$ the complement of
this grid. Any point $(x,y)\in {\mathcal {U}_{\pi/3}}$ belongs
(only) to the tile $T_{k,\ell,m}$  with $k=B(x,y),$ $\ell= C(y)$ and
$m= D(x,y),$ where \[B(x,y)= \operatorname{E} \left( {(3x-\sqrt{3}
y+3)}/{6}\right),\, C(y)=\operatorname{E} \left( {\sqrt{3}
y}/{3}\right) \mbox{ and } D(x,y)= \operatorname{E} \left(
{(3x+\sqrt{3} y)}/{6}\right)\] and now it can be seen that
$m=k+\ell-1$ or $m=k+\ell$ or $m=k+\ell+1.$  In this case,
\begin{itemize}
\item   The tile
$T_{k,\ell,k+\ell}$ is a regular hexagon  and  its center
is at the point
            $$p_{k,\ell}=\left(2k+\ell+{1}/{2},\sqrt{3}\ell+{\sqrt{3}}/{2}\right).$$

\item The tiles $T_{k,\ell,k+\ell-1}$ and $T_{k,\ell,k+\ell+1}$ are
equilateral triangles  whose centers  are
            $q_{k,\ell}=\left(2k+\ell-{1}/{2},\sqrt{3}\ell+{\sqrt{3}}/{6}\right)$
            and $r_{k,\ell}=\left(2k+\ell+{3}/{2},\sqrt{3}\ell+{5\sqrt{3}}/{6}\right),$
            respectively.
\end{itemize}

We also introduce the following function
\begin{multline}\label{e:Vpi3}
V_{\pi/3}(x,y)=\max \big(\left| B(x,y)-C(y)+ D(x,y) \right|,\left|
B(x,y)+C(y)+ D(x,y) +1 \right| -1 \\, \left| - B(x,y)+C(y)+ D(x,y)+1
\right| -1\big).
\end{multline}
Observe that by construction, it is constant on each tile
$T_{k,\ell,m}.$ Hence we can associate to each point in this tile,
the value $$ V_{k,\ell,m}=\max
\left(|k-\ell+m|,|k+\ell+m+1|-1,|-k+\ell+m+1|-1 \right).$$ Our
results for this case are collected in the next theorem. We remark that
the proof of item~$(iii)$ will be the more complicated part of the
paper.

\begin{teoa}\label{th:c} Consider the discrete dynamical system  generated by the
map $F$ given in \eqref{e:normalform} with $\alpha=\pi/3.$ Then:
\begin{enumerate}[(i)]

\item Its critical set is  $\mathcal{F}=\mathcal{F}_{\pi/3}.$
\item The function
$V=V_{\pi/3}$ is a first integral of $F$ on the free-zero set
$\mathcal{U}=\mathcal{U}_{\pi/3}=\R^2\setminus \mathcal{F}_{\pi/3}.$
\begin{enumerate}[(a)]
\item  Each level set $\{V(x,y)=c\},$ with $c\in2\N_0$  even, in $\mathcal U$ is a
necklace formed by $3c+2$ hexagons, see Figure \ref{f:T60}. If we
identify each one of them with a point, the DDS restricted to this
set is conjugated with the DDS generated by the map $h:\Z_{3c+2}\to
\Z_{3c+2},$ $h(i)=i+c/2.$ As a consequence, when $c=4j$ (resp.
$c=4j+2$),  each tile  in this level set is invariant by
$F^{3c/2+1}$ (resp. $F^{3c+2}$)  and restricted to this hexagon,
$F^{3c/2+1}$ (resp. $F^{3c+2}$) is a rotation of order $6$ (resp.
$3$)  around the center of the tile.
 In particular, all
points but the center in each of these tiles have period $9c+6.$

\item Each level set $\{V(x,y)=c\},$ with $c\in2\N_0+1$ odd, in $\mathcal U$ is a
necklace formed by $6c+4$ triangles, see Figure \ref{f:T90}. If we
identify each one of them with a point, the DDS restricted to this
set is conjugated with the DDS generated by the map $h:\Z_{6c+4}\to
\Z_{6c+4},$ $h(i)=i+c.$ As a consequence, each tile  in this level
set is invariant by $F^{6c+4}$ and restricted to this triangle,
$F^{6c+4}$  is a rotation of order $3$ around the center of the
tile.
 In particular, all
points but the center in each of these tiles have period $18c+12.$
\end{enumerate}

\item All orbits with initial condition on $\mathcal{F}$ are periodic with periods $36n+6,$ $18n+9,$ $18n+15$ or
$108n+72,$ for some $n\in\N_0,$ for more details see Theorem
\ref{t:nonzerofreepi3}.

\item The map $F$  is
pointwise periodic. Furthermore, the set of periods is
\begin{align*}
\operatorname{Per}(F)=&\left\{6n+1;\,12n+8;\,12n+10;\, 18n+9;\,18n+15;\,36n+6;\,36n+24; 36n+30\right.\\
{}&\left.\mbox{ and } 108n+72\mbox{ for all }n\in\N_0\right\}.
\end{align*}
\end{enumerate}
\end{teoa}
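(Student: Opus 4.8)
The plan is to follow the same architecture used for Theorems~\ref{th:a} and~\ref{th:b}, treating the four items in turn and reducing the new features of the $\alpha=\pi/3$ case to bookkeeping on the index triple $(B,C,D)$ that labels the tiles. For item~(i) I would compute the backward orbit of the discontinuity line $LC_0=\{y=0\}$. Writing $F$ as the clockwise rotation of angle $\pi/3$ (which has order six) composed with the horizontal translation by $-\operatorname{sign}(y)$, the preimages $LC_{-i}=F^{-i}(LC_0)$ are again straight lines. Because the rotational part has order six, only three distinct directions occur, namely the slopes $0,\pm\sqrt3$ appearing in $\mathcal F_{\pi/3}$, while the integer translations by $\pm1$ produce the arithmetic progressions of offsets. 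I would then check that iterating $F^{-1}$ from $y=0$ generates exactly the three families $y=\sqrt3(x-2k-1)$, $y=\sqrt3\ell$ and $y=-\sqrt3(x-2m)$ and nothing more, yielding $\mathcal F=\mathcal F_{\pi/3}$.

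For item~(ii) the core computation is to track how $F$ transforms the labels $(B,C,D)=(B(x,y),C(y),D(x,y))$ on the zero-free set $\mathcal U$. A direct substitution shows that $F$ permutes the three line families cyclically and hence induces an explicit affine transformation of the triple $(B,C,D)$, depending only on $\operatorname{sign}(y)$, which is constant on each tile; the symmetric max-of-three expression defining $V_{\pi/3}$ is built precisely so as to be invariant under this transformation, giving $V\circ F=V$ on $\mathcal U$. Once $V$ is a first integral, each level set $\{V=c\}$ is a finite union of tiles, which I would count—obtaining $3c+2$ hexagons for even $c$ and $6c+4$ triangles for odd $c$—and order cyclically along the necklace. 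Reading off the image of a single tile gives the shift, so that $F$ is conjugate to $h(i)=i+c/2$ on $\Z_{3c+2}$ (even $c$) and to $h(i)=i+c$ on $\Z_{6c+4}$ (odd $c$). The return time $k$ to a fixed tile is the order of the shift, computed from elementary $\gcd$ arguments: one gets $k=3c/2+1$ for $c=4j$ and $k=3c+2$ for $c=4j+2$ among the hexagons, and $k=6c+4$ for the triangles. Finally, since $F$ acts locally as a rotation of angle $-\pi/3$, after a full return the accumulated rotation is $-k\pi/3$; reducing this angle modulo the symmetry of the tile (order six for hexagons, order three for triangles) gives the claimed rotation order $p\in\{3,6\}$, and hence the generic periods $9c+6$ and $18c+12$.

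Item~(iv) then follows by assembling periods. The tile centers and generic interior points from~(ii) contribute the classes $6n+1$ and $12n+8$ and $12n+10$ (centers), together with $36n+6$, $36n+24$ and $36n+30$ (generic points, obtained by splitting $9c+6$ and $18c+12$ according to $c\bmod4$ and the parity of $c$); item~(iii) supplies the remaining classes $18n+9$, $18n+15$, $108n+72$, and again $36n+6$. Their union is exactly the stated set $\operatorname{Per}(F)$, and pointwise periodicity is immediate once every orbit—whether in $\mathcal U$ or on $\mathcal F$—has been shown periodic.

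The genuine difficulty, and the part I expect to be the \emph{main obstacle}, is item~(iii): orbits issuing from $\mathcal F$ necessarily meet the discontinuity line $y=0$, where the first-integral machinery of~(ii) no longer applies and the map branches according to the convention $\operatorname{sign}(0)=+1$. The plan here is to stratify $\mathcal F$ by the type of its points—the three families of edges and the two kinds of vertices, triangle-type and hexagon-type, of the trihexagonal tiling—and to follow each stratum's orbit explicitly through the sign switch, exploiting the order-six rotational part to control the return to the starting stratum. This careful case analysis, rather than any single clean invariance, is what produces the four separate families $36n+6$, $18n+9$, $18n+15$ and $108n+72$, and its details are deferred to Theorem~\ref{t:nonzerofreepi3}.
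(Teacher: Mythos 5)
Your plan for items (i), (ii) and (iv) follows essentially the same architecture as the paper: identify the trihexagonal tiling, show $F$ permutes tiles and acts on the $3c+2$ or $6c+4$ centers of each level set as a shift on $\Z_M$, write the return map to a tile as $A^k+v$ with $A=R_{\pi/3}$ of order six, and read off the rotation order from $k\bmod 6$; the period bookkeeping in (iv) is also correct. One caveat on item (i): your claim that the preimages $LC_{-i}=F^{-i}(LC_0)$ ``are again straight lines'' is false, because $F^{-1}$ is only piecewise affine; already $LC_{-1}$ is the union of the half-line $y=\sqrt3(x-1)$, $y\ge 0$, and the half-line $y=\sqrt3(x+1)$, $y<0$, so $\mathcal F$ is a union of broken lines and showing it fills out exactly the grid $\mathcal F_{\pi/3}$ is not a one-line observation. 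The paper instead proves both inclusions via the itinerary machinery: Lemma \ref{convex} plus the analogue of Lemma \ref{l:itineraris} show every tile is zero-free, and the fact that the centers of adjacent tiles have different infinite itineraries forces every boundary point into $\mathcal F$. Your direct computation can be repaired, but as stated it skips the real work.

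The genuine gap is item (iii), which the paper itself flags as the hardest part and which your proposal reduces to a one-sentence strategy (``stratify $\mathcal F$ and follow each stratum through the sign switch''). The strategy is the right one, but the content is missing, and it is not routine: the key phenomenon, with no analogue in Theorems \ref{th:a} or \ref{th:b}, is that a non-perfect edge does \emph{not} stay among tiles of a single energy level. The paper shows that the bottom edge of a hexagon at level $c=4k+2$ is carried successively onto edges of triangles at level $4k+3$, back to hexagons at level $4k+2$, then onto triangles at level $4k+1$, closing up only after $18k+15$, $36k+33$, $18k+9$ and $36k+15$ further iterates, for a total period $108k+72$; the non-perfect vertices require a separate chase yielding $18k+9$ and $18k+15$. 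None of this migration between energy levels is predicted or even hinted at by your invariance arguments in (ii) (indeed $V$ fails to be invariant on $\mathcal F$), so without carrying out these explicit congruence computations in $\Z_{12k+8}$, $\Z_{24k+22}$ and $\Z_{24k+10}$ you cannot obtain the families $18n+9$, $18n+15$ and $108n+72$, and item (iv) then remains unproved as well.
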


Once more, although item $(iv)$ is known, see \cite{ChaChe13}, all
the geometric description of the dynamics of $F$ given in the other
items is new. From the above result, on each necklace, $F$ is conjugate with the map
$\varphi:\mathbb{Z}_{3c+2}\times\mathbb{Z}_q\righttoleftarrow$   
given by $\varphi(i,j)=(i+c/2,j+1)$, where $q=6$ 
when $c\equiv 0\,\operatorname{mod}\,(4)$, and $q=3$ 
when $c\equiv 2\,\operatorname{mod}\,(4)$; or $\varphi:\mathbb{Z}_{6c+4}\times\mathbb{Z}_3\righttoleftarrow$   
where $\varphi(i,j)=(i+c,j+1)$, when $c$ is odd.

\begin{figure}[H]
\centerline{\includegraphics[scale=0.62]{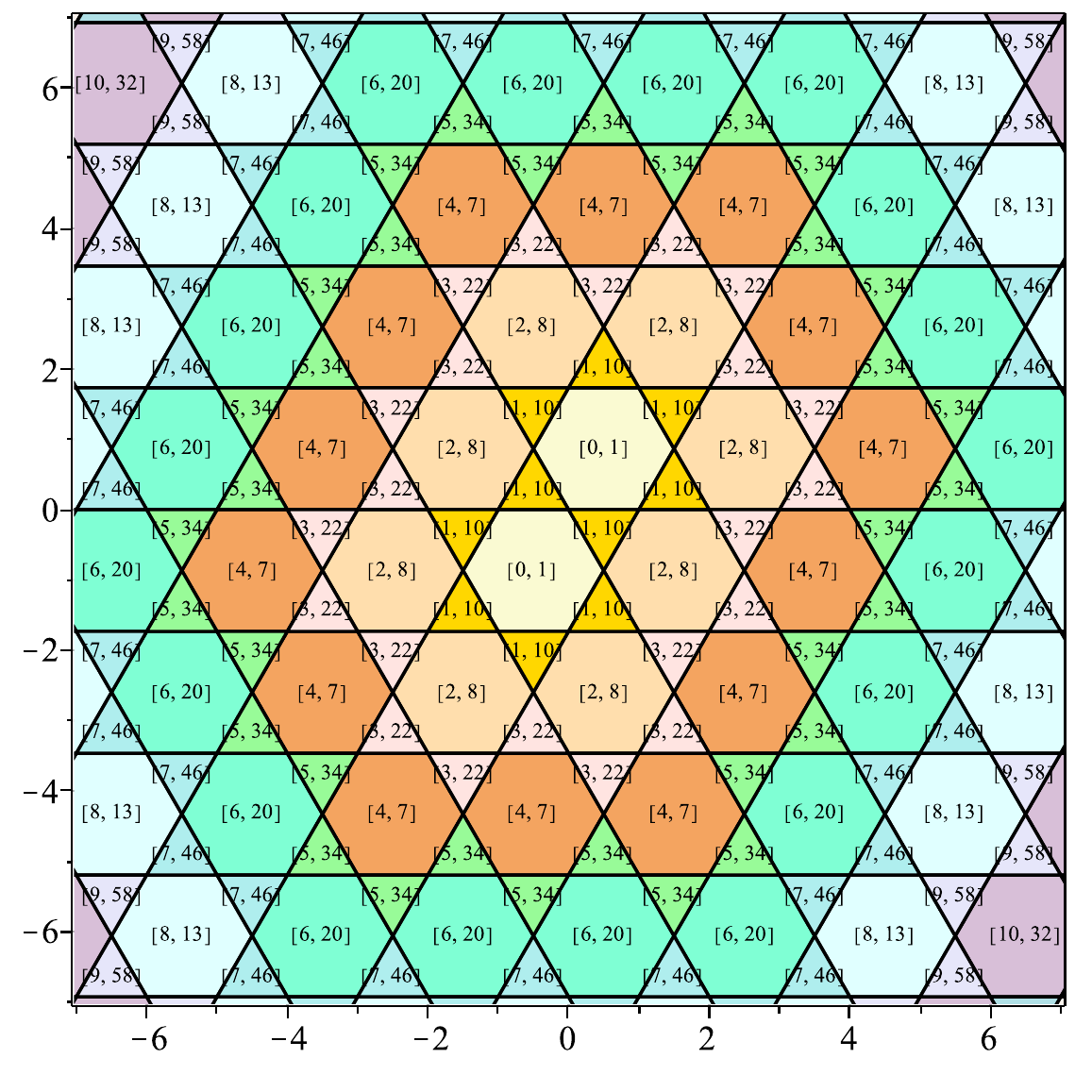}}
    \caption{Level sets of the first integral $V$ of $F$ for $\alpha={\pi}/{3}$, given in in \eqref{e:Vpi3}.
    In each tile the level and the period of the center are indicated respectively between brackets.}\label{f:T60}
\end{figure}

\begin{nota}\label{r:final}
 As we have mentioned, in \cite{GQ} an infinite number of necklaces of a family of maps that include the ones studied in this work are characterized. We want to note that Theorems \ref{th:a}--\ref{th:c} show that for our maps all energy levels are necklaces. In particular, in the case $ \alpha = \pi/2 $ the necklaces studied in \cite{GQ} correspond to the energy levels whose centers have period $4n + 1$ (which are those with even energy level). Let us observe that from Theorem \ref{th:a} we know that there are other necklaces whose period is $4n+2$ (those with odd energy level). In the case $ \alpha = 2 \pi/3 $, the necklaces in \cite{GQ} cover all energy levels since all the necklaces have centers of period $3n + 1$. In the case $\alpha=\pi/ 3$ the  necklaces in \cite{GQ} are those whose centers have period $ 6n+1 $. Observe that Theorem C guarantees the existence of much more necklaces.
\end{nota}

\subsection{The address of a point}

We end this section with the concept of \emph{address} of a point
that will be used in the proof of Theorems A, B, C.

Recall that every map in the considered one-parameter family $F$ has
discontinuity line  $LC_0=\{y=0\}$, so we introduce the sets
$$H_+=\{(x,y)\in\R^2:y\ge 0\}\,\mbox{ and }\, H_-=\{(x,y)\in\R^2:y< 0\},$$
and we call $F_+$ and $F_-$ the map $F$ restricted to $H_+$ and
$H_-$ respectively. For any point $(x,y)\in \R^2$ we define its
\emph{address} $A(x,y)$ as follows: $$A(x,y)=\left\{
                                              \begin{array}{ll}
                                                +, & \hbox{if $(x,y)\in H_+$,} \\
                                                -, & \hbox{otherwise.}
                                              \end{array}
                                            \right.$$
Moreover for every $n\in\mathbb{N}$, we call the \emph{itinerary of length
$n$} of the point $(x,y)$ the sequence of $n$ symbols
$$\underline I_n(x,y)=(A(x,y),A(F(x,y)),\ldots,A(F^{n-1}(x,y))).$$
Notice that if $\underline I_n(x,y)=(i_1,\ldots,i_n)$ then
$F^n(x,y)=F_{i_n}\circ F_{i_{n-1}}\circ\cdots\circ F_{i_1}(x,y).$

For instance if $(x,y)\in H_+$, $F(x,y)\in H_-$ and $F^2(x,y)\in
H_-$, then the length $3$ itinerary of the point is $\{+,-,-\}$, and
$F^3(x,y)=F_{-}\circ F_{-} \circ F_{+}(x,y).$

\begin{lem}\label{convex} Let $\underline J_n=(i_1,\ldots,i_n)$ be a sequence of
symbols of length $n$ with $i_i\in \{+,-\}$ and consider the set
$B(\underline J_n)= \{(x,y)\in\R^2 \mbox{ such that } \underline
I_n(x,y) =\underline J_n\}$. Then $B(\underline J_n)$ is convex.
Moreover $F^n$ restricted to $B(\underline J_n)$ is an affine map.\end{lem}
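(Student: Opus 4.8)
The plan is to prove both claims by induction on $n$, using the fact that each branch $F_+$ and $F_-$ is an affine map defined on a half-plane determined by the sign of the second coordinate.

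First I would establish the base case $n=1$. Here $B(\underline{J}_1)$ is either $H_+=\{y\ge 0\}$ or $H_-=\{y<0\}$, according to whether $i_1=+$ or $i_1=-$. Both half-planes are convex, and $F$ restricted to each of them is by definition the affine map $F_+$ or $F_-$, so both assertions hold trivially.

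For the inductive step, suppose the claim holds for itineraries of length $n-1$. Given $\underline{J}_n=(i_1,\ldots,i_n)$, write $\underline{J}_{n-1}=(i_1,\ldots,i_{n-1})$ for the truncation. The key observation is the identity
$$
B(\underline{J}_n)=B(\underline{J}_{n-1})\cap \big(F^{n-1}\big)^{-1}\big(H_{i_n}\big),
$$
that is, a point lies in $B(\underline{J}_n)$ precisely when its first $n-1$ symbols match $\underline{J}_{n-1}$ \emph{and} its $(n-1)$-th iterate lands in the half-plane $H_{i_n}$ coded by the last symbol. By the induction hypothesis, $B(\underline{J}_{n-1})$ is convex and $F^{n-1}$ restricted to it is affine; call this affine map $L$. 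Then $\big(F^{n-1}\big)^{-1}(H_{i_n})\cap B(\underline{J}_{n-1})=L^{-1}(H_{i_n})\cap B(\underline{J}_{n-1})$, and since $H_{i_n}$ is a half-plane (hence convex) and $L$ is affine, $L^{-1}(H_{i_n})$ is again a half-plane or all of $\R^2$, in any case convex. Therefore $B(\underline{J}_n)$ is an intersection of two convex sets and is convex. For the affinity claim, note $F^n=F_{i_n}\circ F^{n-1}$ on $B(\underline{J}_{n-1})$; on the subset $B(\underline{J}_n)$ every point has its $(n-1)$-th iterate in $H_{i_n}$, where $F$ acts as the single affine branch $F_{i_n}$, so $F^n|_{B(\underline{J}_n)}=F_{i_n}\circ L$ is a composition of affine maps, hence affine.

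The only delicate point is bookkeeping the correspondence between itinerary symbols and the affine branch actually applied: one must verify that on all of $B(\underline{J}_n)$ the last map applied is genuinely the \emph{same} branch $F_{i_n}$, which is exactly what the condition $F^{n-1}(x,y)\in H_{i_n}$ guarantees, so that no point of $B(\underline{J}_n)$ crosses the discontinuity line $LC_0$ at the final step. I expect this to be routine rather than a genuine obstacle, since the half-plane structure makes the preimage under an affine map transparent; the main thing to be careful about is that the strict versus non-strict inequality defining $H_-$ versus $H_+$ is preserved, but this does not affect convexity of either the half-planes or their affine preimages.
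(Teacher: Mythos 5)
Your proposal is correct and follows essentially the same route as the paper: induction on $n$, with the base case being the two half-planes and the inductive step resting on the identity $B(\underline J_n)=B(\underline J_{n-1})\cap \bigl(F^{n-1}\bigr)^{-1}(H_{i_n})$, the induction hypothesis that $F^{n-1}$ is affine on $B(\underline J_{n-1})$, and the fact that an affine preimage of a half-plane is convex. The affinity of $F^n|_{B(\underline J_n)}$ as the composition $F_{i_n}\circ F_{i_{n-1}}\circ\cdots\circ F_{i_1}$ is also exactly the paper's argument.
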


\begin{proof} The proof of the convexity follows easily by induction. If $n=1,$
$B(\underline J_n)$ is either $H_+$ or $H_-$ both convex sets.
Assume that the result holds for sequences of length $n-1$ and set
$\underline J_{n-1}=(i_1,\ldots,i_{n-1}).$ Therefore we have
$$B(\underline J_n)=\{(x,y)\in B(\underline J_{n-1}): F^{n-1}(x,y)\in H_{i_n}\}.$$ Moreover,
 $F^{n-1}$ restricted to $B(\underline J_{n-1})$ is the affine map
$G=F_{i_{n-1}}\circ\ldots\circ F_{i_1}.$ So we have
$$B(\underline J_n)=B(\underline J_{n-1})\cap G^{-1}(H_{i_n}).$$ This fact
 proves that $B(\underline J_n)$ is convex because it is
 the intersection of two convex sets. This ends the inductive proof of convexity. Furthermore,
 $F^n(x,y)=F_{i_n}\circ F_{i_{n-1}}\circ\cdots\circ
F_{i_1}(x,y),$ for all $(x,y)\in B(\underline J_n),$ showing that
$F^n$ restricted to $B(\underline J_n)$ is an affine map.
\end{proof}

\section{Proof of Theorem \ref{th:a}}\label{s:alphapi2}

\subsection{Preliminaries}

We start by determining the set of tile centers, $p_{k,\ell}$, such
that $V(p_{k,\ell})=c$ for $c\in\N_0.$ First, we observe that there
are two tiles corresponding to the level set $c=0$. These two tiles
contain the two fixed points of $F$ which are:
$p_{0,0}=\left({1}/{2},{1}/{2}\right)$ that belongs to the tile
$T_{0,0}=(0,1)\times (0,1)$ and
$p_{-1,-1}=\left(-{1}/{2},-{1}/{2}\right)$ that belongs to
$T_{-1,-1}=(-1,0)\times (-1,0).$  It is easy to see that these two
tiles are invariant. To describe the rest of level sets, we denote
$\mathcal{Q}_1=\{(x,y)\in\R^2:x<0,y>0\},$
$\mathcal{Q}_2=\{(x,y)\in\R^2:x>0,y>0\},$
$\mathcal{Q}_3=\{(x,y)\in\R^2:x>0,y<0\},$ and
$\mathcal{Q}_4=\{(x,y)\in\R^2:x<0,y<0\}.$

\begin{lem}\label{l:Nivell} For each level set $\{V=c\}$ with $c\in\N_0$
 there are $4c+2$ centers $p_{k,\ell}$. Furthermore, for each natural number $c\ge 1$ we have:
\begin{enumerate}[(a)]

\item  $\{p_{k,\ell}:V_{k,\ell}=c\}\cap \mathcal{Q}_1=\{(k+{1}/{2},\ell+{1}/{2}):l=k+c,k=-c,-c+1,\ldots ,-1\}.$
We denote by $X_1,X_2,\ldots ,X_c$ these $c$ centers for
$k=-c,-c+1,\ldots ,-1$ respectively. Every one of them lies on the
straight line $y=x+c.$

\item  $\{p_{k,\ell}:V_{k,\ell}=c\}\cap \mathcal{Q}_2=\{(k+{1}/{2},\ell+{1}/{2}):l=-k+c,k=0,1,\ldots ,c\}.$
We denote by $X_{c+1},X_{c+2},\ldots ,X_{2c+1}$ these $c+1$ centers
for $k=0,1,\ldots ,c$ respectively.
 Every one of them lies on the straight line $y=-x+c+1.$

\item  $\{p_{k,\ell}:V_{k,\ell}=c\}\cap \mathcal{Q}_3=\{(k+{1}/{2},\ell+{1}/{2}):l=k-c,k=0,1,\ldots ,c-1\}.$
We denote by $X_{2c+2},X_{2c+3},\ldots ,X_{3c+1}$ these $c$ centers
for $k=c-1,c-2,\ldots ,0$ respectively. Every one of them lies on
the straight line $y=x-c.$

\item  $\{p_{k,\ell}:V_{k,\ell}=c\}\cap \mathcal{Q}_4=\{(k+{1}/{2},\ell+{1}/{2}):l=-k-c-2,k=-1,-2,\ldots,-c-1\}.$
We denote by $X_{3c+2},X_{3c+3},\ldots ,X_{4c+2}$ these $c+1$
centers for $k=-1,-2,\ldots,-c-1$ respectively. Every one of them
lies on the straight line $y=-x-c-1.$
\end{enumerate}
\end{lem}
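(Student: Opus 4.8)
The plan is to reduce the statement to an elementary lattice-point count. Since the center $p_{k,\ell}=(k+1/2,\ell+1/2)$ satisfies $\operatorname{E}(k+1/2)=k$ and $\operatorname{E}(\ell+1/2)=\ell$, formula \eqref{e:Vpi2} gives immediately
$$V_{k,\ell}=\max\big(|k+\ell+1|-1,\,|k-\ell|\big),$$
a function on $\Z^2$ that is symmetric under $k\leftrightarrow\ell$. So the task is to count, and locate, the integer pairs $(k,\ell)$ lying on each level set $\{V_{k,\ell}=c\}$.

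First I would observe that the coordinates of a center are half-integers, hence never zero, so each center lies in exactly one of the four open quadrants $\mathcal{Q}_1,\dots,\mathcal{Q}_4$; in lattice terms these correspond to $\{k\le-1,\ell\ge0\}$, $\{k\ge0,\ell\ge0\}$, $\{k\ge0,\ell\le-1\}$ and $\{k\le-1,\ell\le-1\}$, respectively. I would then simplify $V_{k,\ell}$ separately on each quadrant. On $\mathcal{Q}_2$ we have $k+\ell+1\ge1$ and $k+\ell\ge|k-\ell|$, so $V_{k,\ell}=k+\ell$; on $\mathcal{Q}_4$, writing $k'=-k-1\ge0$ and $\ell'=-\ell-1\ge0$, the same computation gives $V_{k,\ell}=-k-\ell-2$. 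The remaining two quadrants are then handled by the $k\leftrightarrow\ell$ symmetry, so it suffices to treat $\mathcal{Q}_1$.

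The main (and only mildly delicate) step is the reduction of the maximum on the mixed-sign quadrant $\mathcal{Q}_1$, where $k+\ell+1$ can have either sign. There $k-\ell\le-1$ forces $|k-\ell|=\ell-k$, and I would compare $\ell-k$ with $|k+\ell+1|-1$ in the two cases $k+\ell+1\ge0$ and $k+\ell+1<0$: in the first case the difference is $(\ell-k)-(k+\ell)=-2k>0$, and in the second it is $(\ell-k)-(-k-\ell-2)=2\ell+2>0$. Hence $V_{k,\ell}=\ell-k$ throughout $\mathcal{Q}_1$, and by symmetry $V_{k,\ell}=k-\ell$ throughout $\mathcal{Q}_3$.

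It then remains to solve the linear equation $V_{k,\ell}=c$ in each quadrant subject to the quadrant constraints and to read off the parametrizations. For $c\ge1$ this yields $\ell=k+c$ with $k=-c,\dots,-1$ (giving $c$ centers on $y=x+c$) in $\mathcal{Q}_1$; $\ell=-k+c$ with $k=0,\dots,c$ ($c+1$ centers on $y=-x+c+1$) in $\mathcal{Q}_2$; $\ell=k-c$ with $k=0,\dots,c-1$ ($c$ centers on $y=x-c$) in $\mathcal{Q}_3$; and $\ell=-k-c-2$ with $k=-1,\dots,-c-1$ ($c+1$ centers on $y=-x-c-1$) in $\mathcal{Q}_4$, exactly matching statements (a)--(d); the lines in the $(x,y)$-plane follow by adding $1/2$ to each lattice coordinate. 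Summing the four counts gives $c+(c+1)+c+(c+1)=4c+2$. Finally the case $c=0$ is checked directly: the $\mathcal{Q}_1$ and $\mathcal{Q}_3$ equations have no solution, while $\mathcal{Q}_2$ and $\mathcal{Q}_4$ each contribute a single center, namely the two fixed points $p_{0,0}$ and $p_{-1,-1}$, for a total of $2=4\cdot0+2$.
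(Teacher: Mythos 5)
Your proposal is correct and follows essentially the same route as the paper: the decisive step in both is the sign analysis of $k+\ell+1$ on the mixed-sign quadrant $\mathcal{Q}_1$, yielding $V_{k,\ell}=\ell-k$ there (with the identical comparisons $-2k>0$ and $2\ell+2>0$), after which the count is immediate. Your presentation is marginally cleaner in that you derive a closed form for $V$ on each quadrant and solve $V=c$ once, rather than arguing by double inclusion, and you make explicit the $k\leftrightarrow\ell$ symmetry and the $c=0$ check that the paper leaves implicit.
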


\begin{proof}
    In order to prove $(a)$ we begin by considering the points $(k+{1}/{2},k+c+{1}/{2})$ with $-c\le k\le -1.$ Then
    $V_{k,k+c}=\max \left(|2k+c+1|-1,c\right).$ The inequality $-c\le k\le -1$ implies $-c+1\le 2k+c+1\le c-1,$ i.e.
    $|2k+c+1|\le c-1.$ Therefore $|2k+c+1|-1\le c-2$ and consequently $V_{k,k+c}=c.$
    Clearly, $k+{1}/{2}<0$ and $k+c+{1}/{2}> 0$, and hence the points belong to $\mathcal{Q}_1$.

    To see the other inclusion take $(k+{1}/{2},\ell+{1}/{2})\in \mathcal{Q}_1$ with $V_{k,\ell}=c$. We have to prove that
    $\ell=k+c$ and $-c\le k\le -1.$ We know that $k<0,$ $\ell\ge 0$ which easily implies that $\ell>k.$ Hence
     $|\ell-k|=\ell-k$, and $\ell+k<\ell<\ell-k.$ Then
    $$V_{k,\ell}=\max \left(  \left| k+\ell+1\right| -1, \left| k-\ell \right|  \right)=\max \left(  \left| k+\ell+1\right| -1,\ell-k\right).$$
Consider the following two cases:
\begin{enumerate}[(i)]
    \item Assume $\ell+k+1\ge 0.$ Then $V_{k,\ell}=\max(\ell+k,\ell-k)=\ell-k$ because $\ell+k<\ell<\ell-k.$ It implies
     that $\ell-k=c,$ that is $\ell=k+c.$ Furthermore, since $\ell=k+c$ and $\ell\ge 0$ we get $k\ge -c$.
    \item Assume $\ell+k+1< 0.$ Then $V_{k,\ell}=\max(-\ell-k-2,\ell-k).$ Since $(\ell-k)-(-\ell-k-2)=2\ell+2=2(\ell+1)$
     and $\ell+1>0$ also $\ell=k+c$ and the result follows as above.
\end{enumerate}

The proof of statements $(b),(c)$ and $(d)$ follows using the same
easy arguments. ~\end{proof}

In Figure \ref{f:figura1}  we show the points $p_{k,\ell}$ in the
levels $c=2$ and $c=3,$ respectively.

\begin{figure}[H]
\centerline{\includegraphics[scale=0.32]{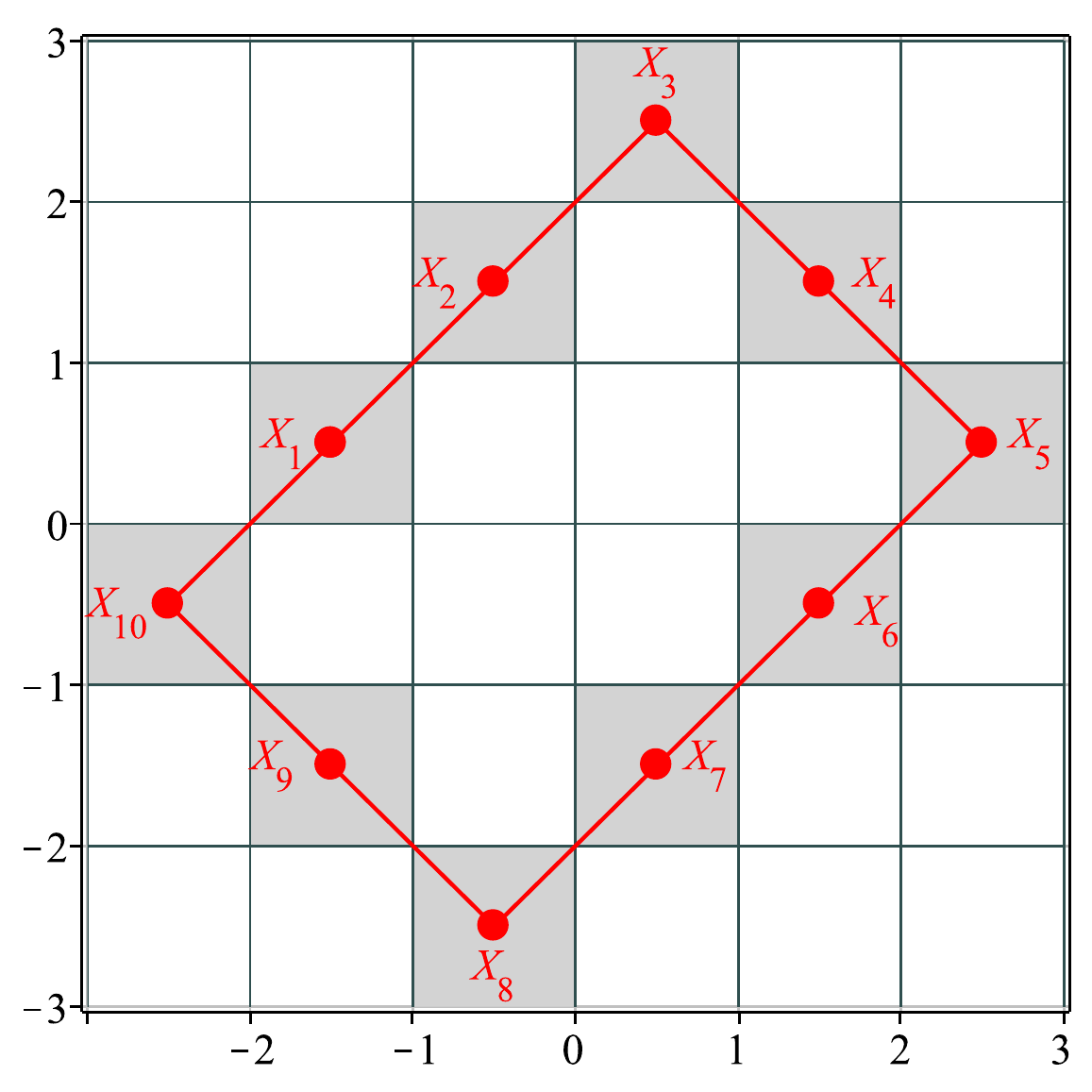}\,\includegraphics[scale=0.32]{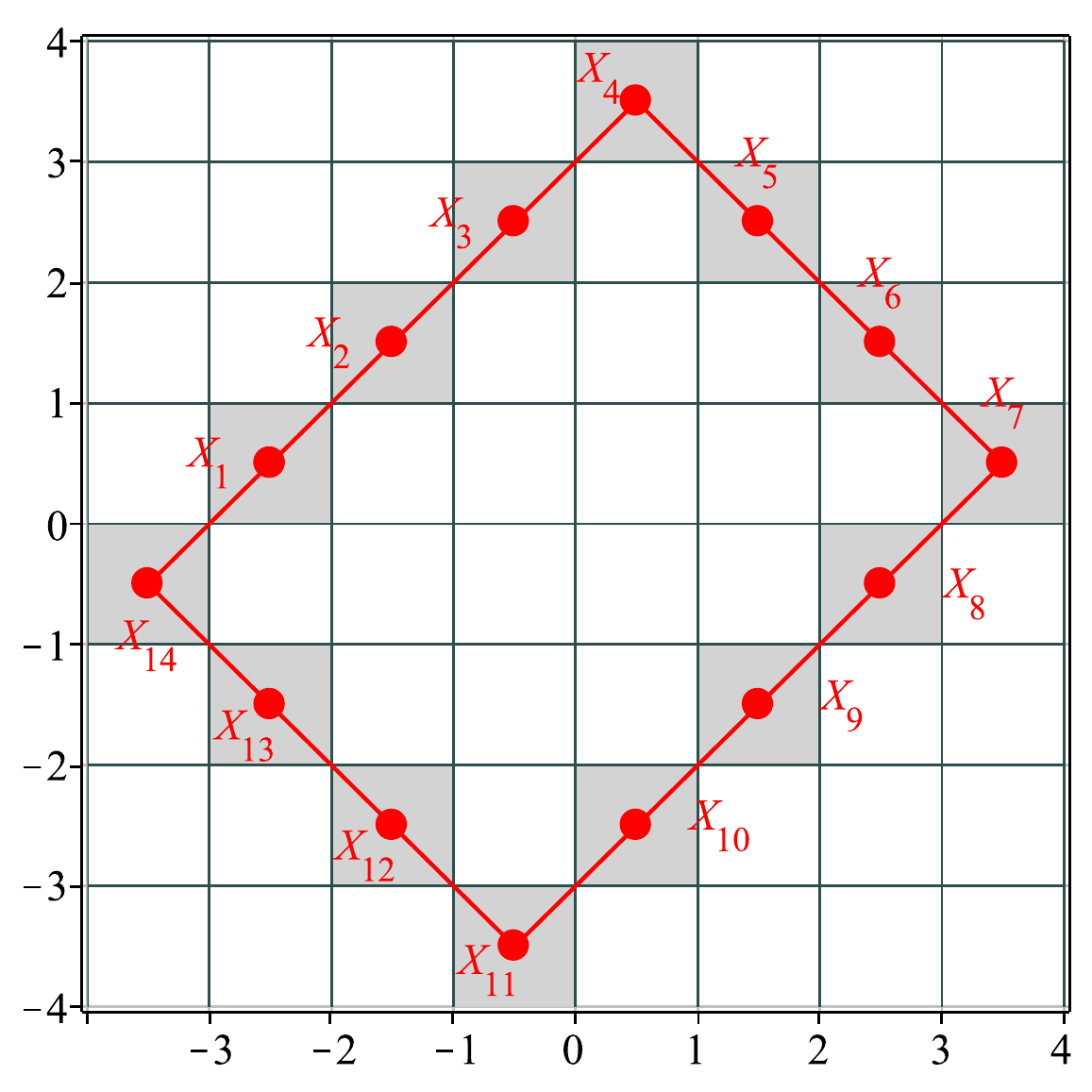}}
        \caption{The centers in the levels $c=2$ and $c=3.$}\label{f:figura1}
\end{figure}

\subsection{Proof of items (i) and (ii) of Theorem \ref{th:a}: dynamics on the zero-free set}

Recall that in this case, $F(x,y)=(y,-x+\operatorname{sign}(y)),$
$F_+(x,y)=(y,-x+1)$ and $F_-(x,y)=(y,-x-1).$ We will split our proof
of items $(i)$ and $(ii)$ of Theorem \ref{th:a} in several lemmas
and propositions.

We start facing the dynamics of the center points of the tiles, or
in other words, the dynamics among the beads of each necklace,
that as we will prove will be invariant under the map $F.$

\begin{lem}\label{l:zetamodul} Fixed  $c\in\mathbb{N}$,  consider the centers $X_1,X_2,\ldots,X_{4c+2}$ which belong to the level
             set $\{V=c\}.$ Then
    \begin{equation}\label{congruencia}
    F(X_i)=X_j \,\,\text{with}\,\,j\equiv i+c\,\, \operatorname{mod}\,\,(4c+2).
    \end{equation}
\end{lem}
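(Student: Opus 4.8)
The plan is to reduce the statement to an explicit computation of how $F$ acts on the half-integer lattice of centers, and then to carry out the index bookkeeping quadrant by quadrant using the parametrization of Lemma~\ref{l:Nivell}. The first step is to record the action of $F$ on an arbitrary center $p_{k,\ell}=(k+1/2,\ell+1/2)$. Since the second coordinate of $p_{k,\ell}$ equals $\ell+1/2$, which is positive exactly when $\ell\ge 0$ and negative exactly when $\ell\le -1$, the branch of $F$ that applies is dictated by the sign of $\ell$. A direct substitution into $F_+(x,y)=(y,-x+1)$ and $F_-(x,y)=(y,-x-1)$ gives
\begin{equation*}
F(p_{k,\ell})=\begin{cases} p_{\ell,-k}, & \ell\ge 0,\\[1mm] p_{\ell,-k-2}, & \ell\le -1,\end{cases}
\end{equation*}
because matching the half-integer coordinates forces $k'=\ell$ together with $\ell'=-k$ (resp. $\ell'=-k-2$). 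Thus $F$ sends centers to centers, and it remains only to identify indices.

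The second step treats each of the four quadrant groups of Lemma~\ref{l:Nivell} separately. For a center $X_i$ I would write its pair $(k,\ell)$ as an explicit affine function of $i$, read off the sign of $\ell$ to select the correct branch above (so that $F_+$ applies throughout $\mathcal{Q}_1$ and $\mathcal{Q}_2$, where $\ell\ge 0$, and $F_-$ throughout $\mathcal{Q}_3$ and $\mathcal{Q}_4$, where $\ell\le -1$), apply the corresponding formula, and then recognize the image as some $X_j$ by checking which of the four defining relations $\ell'-k'=c$, $k'+\ell'=c$, $k'-\ell'=c$, $k'+\ell'=-c-2$ its coordinates satisfy. For instance, in $\mathcal{Q}_1$ one has $X_i=p_{i-1-c,\,i-1}$ for $1\le i\le c$, so $\ell=i-1\ge 0$ and $F(X_i)=p_{i-1,\,c-i+1}$; its coordinates satisfy $k'+\ell'=c$, hence it lies in $\mathcal{Q}_2$ and, by the indexing of Lemma~\ref{l:Nivell}, equals $X_{c+i}=X_{i+c}$. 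Repeating the same computation on $\mathcal{Q}_2$, $\mathcal{Q}_3$ and $\mathcal{Q}_4$ yields in every case $F(X_i)=X_{i+c}$, with the index read modulo $4c+2$.

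The only point demanding care — and the main obstacle — is the wrap-around occurring in $\mathcal{Q}_4$, where the nominal index $i+c$ exceeds $4c+2$ and one must verify that these images coincide, after reduction modulo $4c+2$, with the centers of $\mathcal{Q}_1$. Concretely, for $3c+2\le i\le 4c+2$ the branch $F_-$ gives $F(X_i)=p_{\,i-4c-3,\,i-3c-3}$; for $i=3c+2$ this is still $X_{4c+2}$, while for $3c+3\le i\le 4c+2$ the image satisfies $\ell'-k'=c$ and therefore lies in $\mathcal{Q}_1$ as $X_{i+c-(4c+2)}$, which is exactly the residue of $i+c$ modulo $4c+2$. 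Since every image has been identified with a center $X_j$ of the same level set, the invariance of $\{V=c\}$ under $F$ is automatic and no separate check that $V$ is preserved on centers is required; assembling the four cases establishes \eqref{congruencia}.
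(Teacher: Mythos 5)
Your proposal is correct and follows essentially the same route as the paper: a quadrant-by-quadrant computation of $F_\pm$ on the explicit centers parametrized by Lemma~\ref{l:Nivell}, followed by re-identification of the image via the defining relation of the next quadrant. The only cosmetic difference is that you first isolate the uniform formula $F(p_{k,\ell})=p_{\ell,-k}$ (for $\ell\ge 0$) and $p_{\ell,-k-2}$ (for $\ell\le -1$), which slightly streamlines the same bookkeeping the paper carries out case by case.
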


\begin{proof}
Consider $i=1,2,\ldots ,c$, then $X_i\in \mathcal{Q}_1.$ From Lemma \ref{l:Nivell} we know that every one of these centers
 is $\left(k+{1}/{2},k+c+{1}/{2}\right)$ with $k=-c,-c+1,\ldots ,-1.$ Since they belong to $H_+$ we have
$$F(X_i)=F_+\left(k+{1}/{2},k+c+{1}/{2}\right)=\left(k+c+{1}/{2},-k+{1}/{2}\right).$$
Denoting $\bar{k}=k+c $ we see that
$\left(k+c+{1}/{2},-k+{1}/{2}\right)=\left(\bar{k}+{1}/{2},
-\bar{k}+c+{1}/{2}\right)$ which satisfies the condition (b) of the
Lemma \ref{l:Nivell}. When $k$
 runs from $-c$ to $-1$ (corresponding with the points  $X_1,X_2,\ldots ,X_c$), then $\bar{k}$ runs from $0$
  to $c-1$ (corresponding with the points  $X_{c+1},X_{c+2},\ldots ,X_{2c}$). Hence we have proved (\ref{congruencia}) for $i=1,2,\ldots, c.$

If $i=c+1$ then $X_{c+1}=\left({1}/{2},c+{1}/{2}\right),$ hence
$F(X_{c+1})=F_+(X_{c+1})=\left(c+{1}/{2},{1}/{2}\right)=X_{2c+1}.$

The proof for $i=c+2,c+3,\ldots, 2c+1$ is done in a similar way, and
also for the rest of values of $i=2c+2,\ldots,4c+2$, but taking into account that in these cases
$F(X_{i})=F_-(X_{i}).$

\end{proof}

As a consequence of Lemma \ref{l:zetamodul} the center points of a
level set form an invariant set and we can prove that the function
$V_{\pi/2}$ defined in~\eqref{e:Vpi2} is a first integral of $F.$

\begin{proof} [Proof  of the first part of item $(ii)$  of Theorem \ref{th:a}] We start
proving that the function $V=V_{\pi/2}$ defined in~\eqref{e:Vpi2} is
a first integral of $F$ on the  set ${\mathcal{U}}={\mathcal
U}_{\pi/2}$.

 Consider a point $(x,y)\in {\mathcal{U}},$ then
$(x,y)\in T_{k,\ell}$ for a certain $k,\ell$ and
  by definition we know that $V(x,y)=V(p_{k,\ell}).$ From Lemma \ref{l:zetamodul} we know that $F(p_{k,\ell})=
p_{\bar{k},\bar{\ell}}$ with $V(p_{k,\ell})=V(p_{\bar{k},\bar{\ell}}).$ On the other hand, since
 each tile is entirely contained in $H_+\setminus \{y=0\}$ or in $H_-,$ $F(T_{k,\ell})=F_+(T_{k,\ell})$
  or $F(T_{k,\ell})=F_-(T_{k,\ell}).$ Since $F_+$ and $F_-$ are rotations (thus isometries)  we get
   that $F$ sends tiles to tiles. In particular $F(T_{k,\ell})=T_{\bar{k},\bar{\ell}}$ and hence
    $V(x,y)=V(p_{k,\ell})=V(p_{\bar{k},\bar{\ell}})=V(F(x,y)).$
\end{proof}

Now we are able to describe the dynamics of the center points and,
in particular, to prove that they are periodic.

\begin{propo}\label{p:periodesdelscentres}
    Every center $p_{k,\ell}$ is a periodic point of $F.$ Furthermore, setting $V_{k,\ell}=c$ we
    have that when $c$ is even (resp. odd), then $p_{k,\ell}$ has period
    $2c+1$ (resp. $4c+2$).
    \end{propo}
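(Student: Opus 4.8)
The plan is to leverage Lemma~\ref{l:zetamodul} directly, since it already tells us that on the level set $\{V=c\}$ the map $F$ permutes the $4c+2$ centers according to the rule $F(X_i)=X_j$ with $j\equiv i+c \pmod{4c+2}$. This reduces the computation of the period of any center to a purely arithmetic question about the additive map $h(i)=i+c$ on $\Z_{4c+2}$: the period of $p_{k,\ell}$ as a point of $F$ equals the order of $c$ in the additive group $\Z_{4c+2}$, i.e.\ the smallest positive integer $m$ with $mc\equiv 0 \pmod{4c+2}$.

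First I would observe that the order of the element $c$ in $\Z_{4c+2}$ is $(4c+2)/\gcd(c,4c+2)$. So the key step is to compute $\gcd(c,4c+2)$. Since $4c+2 = 4\cdot c + 2$, we have $\gcd(c,4c+2)=\gcd(c,2)$, which equals $2$ when $c$ is even and $1$ when $c$ is odd. Substituting back: when $c$ is even the order is $(4c+2)/2 = 2c+1$, and when $c$ is odd the order is $(4c+2)/1 = 4c+2$. This matches exactly the periods claimed in the statement.

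The remaining point to address is the special case $c=0$ excluded from Lemma~\ref{l:zetamodul} (which fixes $c\in\N$), but this is handled separately: the two centers at level $c=0$ are the fixed points $p_{0,0}$ and $p_{-1,-1}$ of $F$, already identified in the Preliminaries, so they have period $1 = 2\cdot 0 + 1$, consistent with the even-$c$ formula. For $c\ge 1$ the argument above applies verbatim. I would also note, to be fully rigorous, that the order of $c$ in $\Z_{4c+2}$ is genuinely the minimal period and not merely a period: if $m$ is the minimal positive integer with $F^m(X_i)=X_i$, then by~\eqref{congruencia} iterated we get $F^m(X_i)=X_{i+mc}$, so $F^m(X_i)=X_i$ if and only if $mc\equiv 0\pmod{4c+2}$, establishing that the period coincides exactly with the order of $c$.

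I do not expect a genuine obstacle here; the only care required is the clean identification of the dynamical period with the group-theoretic order and the short gcd computation $\gcd(c,4c+2)=\gcd(c,2)$. The main thing to get right is to phrase the equivalence ``$F^m(X_i)=X_i \iff mc\equiv 0$'' carefully, so that the minimality in the definition of period transfers correctly to minimality in the order of $c$.
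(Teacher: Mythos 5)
Your proposal is correct and follows essentially the same route as the paper: both reduce the question, via Lemma~\ref{congruencia} of Lemma~\ref{l:zetamodul}, to finding the minimal $m$ with $mc\equiv 0 \pmod{4c+2}$, i.e.\ the order of $c$ in $\Z_{4c+2}$. Your $\gcd(c,4c+2)=\gcd(c,2)$ computation is a slightly cleaner packaging of the paper's parity case analysis, and your separate treatment of $c=0$ matches the paper's identification of the two fixed points in the preliminaries.
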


\begin{proof} Fix a level $\{V=c\}$ with $c\in\mathbb{N}.$ From Lemma \ref{l:Nivell} we know
 that on $\{V=c\}$ there are $4c+2$ different centers.
    From Lemma \ref{l:zetamodul}, we know that $F$ sends centers to centers, that is, the set $\{X_1,X_2,\ldots, X_{4c+1}\}$
     is invariant by $F.$ Hence, given a center $X_{i_1}$ of the previous set we can study the sequence
    $X_{i_1}\overset{F}{\longrightarrow}X_{i_2}\overset{F}{\longrightarrow}X_{i_3}\overset{F}{\longrightarrow}\cdots .$
    Since the orbit of every center has a finite number of elements and, since $F$ is a bijective map
     and therefore the orbit of $X_{i_1}$ can not be preperiodic, we
    get that $X_{i_p}=X_{i_1}$ for a certain $p$, and therefore it is periodic. Clearly the period must be less or equal to $4c+2.$

 From Lemma \ref{l:zetamodul}, the map $F$ restricted to $\{X_1,X_2,\ldots, X_{4c+1}\}$ is
  conjugate to the map $h:\Z_{4c+2}\longrightarrow \Z_{4c+2}$  defined by $h(i)=i+c.$
    Then
    $$F^p(X_i)=X_i \Leftrightarrow h^p(i)=i \Leftrightarrow i+cp\equiv i \,\, \operatorname{mod}\,\,
    (4c+2)\Leftrightarrow \exists\, n\in\N\mbox{ s.t. } cp=n(4c+2).$$

     Assume that $c=2k$ is an even number. Then $2kp=n(8k+2)\Leftrightarrow kp=n(4k+1).$
         It implies that $p$ is a multiple of $4k+1=2c+1.$ Since $p\le 4c+2$ we get that $p=2c+1$
          or $p=4c+2.$ But we observe that the orbit of $X_i$ only contains some points $X_j$
           with~$j$ having the same parity of $i.$ Hence, we get two different periodic orbits, each one of them of period $2c+1.$

         Assume that $c=2k+1$ is an odd number. Then $(2k+1)p=n(8k+6).$ It implies that~$p$ is a multiple of $8k+6=4c+2.$ Hence $p=4c+2.$

\end{proof}

 We introduce now the concept of \emph{itinerary map associated with a center}.

\begin{defi}\label{d:itinerarymap}
Fix $c\in\mathbb{N}$ and consider one of the centers of the tiles $X_j$ for some $j=1,2,\ldots ,4c+2,$ with $V(X_j)=c.$  Since $X_j$ is
$p$-periodic with $p=2c+1$ or $p=4c+2$ depending on whether $c$ is
even or it is odd,  if we consider its itinerary of length $p$:
$\underline I_p=(i_1,i_2,\ldots,i_p)$ we have that $
F^p(X_j)=F_{i_p}\circ F_{i_{p-1}}\circ\cdots\circ F_{i_1}(X_j)
=X_j.$  We denote this composition by $I_j=F_{i_p}\circ
F_{i_{p-1}}\circ\cdots\circ F_{i_1}$  and we call it the
\emph{itinerary map} associated with $X_j.$
\end{defi}

For instance, if $c=2$  then the center $X_1$ is $5$-periodic and its orbit is
$$X_{1}\overset{F_+}{\longrightarrow}X_{3}\overset{F_+}{\longrightarrow}X_{5}
\overset{F_+}{\longrightarrow}
X_{7}\overset{F_-}{\longrightarrow}X_{9}\overset{F_-}{\longrightarrow}X_{1}$$
This can be easily obtained using the formula \eqref{congruencia} in
Lemma \ref{l:zetamodul} (see also Figure \ref{f:figura1}). Hence
$\underline I_5(X_1)=(+,+,+,-,-)$ and its itinerary map is
$I_1=F_-^2\circ F_+^3$.

When $c=3,$ then using again formula
\eqref{congruencia} in Lemma \ref{l:zetamodul} (see again Figure
\ref{f:figura1}) we get:
\begin{align*}X_{1}&\overset{F_+}{\longrightarrow}X_{4}\overset{F_+}{\longrightarrow}X_{7}
\overset{F_+}{\longrightarrow}
X_{10}\overset{F_-}{\longrightarrow}X_{13}\overset{F_-}{\longrightarrow}X_{2}
\overset{F_+}{\longrightarrow}X_{5}\overset{F_+}{\longrightarrow}
X_{8}\\&\overset{F_-}{\longrightarrow}
X_{11}\overset{F_-}{\longrightarrow}X_{14}\overset{F_-}{\longrightarrow}
X_{3}
\overset{F_+}{\longrightarrow}X_{6}\overset{F_+}{\longrightarrow}X_{9}
\overset{F_-}{\longrightarrow}
X_{12}\overset{F_-}{\longrightarrow}X_{1},
\end{align*}
and hence the itinerary map of $X_1$ is
$I_1=F_-^2\circ F_+^2\circ F_-^3\circ F_+^2\circ F_-^2\circ F_+^3$.

\begin{lem}\label{l:lemarotacio} Fixed $c\in\mathbb{N},$ consider the centers $X_1,X_2,\ldots, X_{4c+2}$
 lying in the level set $\{V=c\}.$ Then for all $j=1,2,\ldots, 4c+2$, the itinerary map $I_j$
  is a rotation centered at $X_j$ of order $4$ if $c$ is even (angle ${\pi}/{2}$), and
   of order $2$ if $c$ is odd (angle $\pi$). In particular $X_j$ is an isolated fixed point of $I_j$.
\end{lem}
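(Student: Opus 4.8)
The plan is to exploit the single feature that makes all the itinerary maps behave uniformly: both branches $F_+(x,y)=(y,-x+1)$ and $F_-(x,y)=(y,-x-1)$ are affine maps sharing the \emph{same} linear part
$$R=\begin{pmatrix} 0 & 1 \\ -1 & 0\end{pmatrix},$$
which is the quarter-turn rotation (of angle $-\pi/2$), so that $R$ has order $4$ and $R^4=\operatorname{Id}$. Since the itinerary map $I_j=F_{i_p}\circ\cdots\circ F_{i_1}$ from Definition~\ref{d:itinerarymap} is a composition of exactly $p$ branches, each of which is an isometry with linear part $R$, the affine map $I_j$ is itself an isometry whose linear part is $R^p$, \emph{independently of the particular sign sequence} $(i_1,\ldots,i_p)$. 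This observation reduces the whole lemma to a parity computation together with the elementary geometry of planar isometries.

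Next I would read off $R^p$ from the period given by Proposition~\ref{p:periodesdelscentres}. When $c$ is even we have $p=2c+1\equiv 1\pmod 4$, hence $R^p=R$, a rotation of order $4$ (angle $-\pi/2$); when $c$ is odd we have $p=4c+2\equiv 2\pmod 4$, hence $R^p=R^2=-\operatorname{Id}$, a rotation of order $2$ (angle $\pi$). In both cases $R^p-\operatorname{Id}$ is invertible, since its determinant equals $2$ and $4$ respectively, so the affine isometry $I_j$ has a unique fixed point in $\R^2$.

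To identify that fixed point with $X_j$, I would use that $X_j$ is $p$-periodic and that, by construction in Definition~\ref{d:itinerarymap}, the length-$p$ itinerary of $X_j$ is precisely the sequence $(i_1,\ldots,i_p)$ defining $I_j$; therefore $I_j(X_j)=F^p(X_j)=X_j$. An affine isometry of the plane with a nontrivial rotational linear part $R^p$ and a fixed point $X_j$ is exactly the rotation about $X_j$ by the angle of $R^p$ (conjugating by the translation that sends $X_j$ to the origin turns $I_j$ into the linear map $R^p$). Hence $I_j$ is a rotation of order $4$ about $X_j$ when $c$ is even, and of order $2$ when $c$ is odd; and because the rotation angle is nonzero, $X_j$ is the only fixed point, so it is isolated. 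I do not expect any real obstacle here: the argument is immediate once the common linear part $R$ is spotted, and the only point requiring minor care is the bookkeeping of $p\bmod 4$ together with the verification that the itinerary of $X_j$ is indeed the sequence defining $I_j$, which is exactly what Definition~\ref{d:itinerarymap} provides.
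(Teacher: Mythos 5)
Your proposal is correct and follows essentially the same route as the paper: both arguments observe that $F_\pm$ share the linear part $R=R_{\pi/2}$, so $I_j$ is affine with linear part $R^p$, then reduce $p\bmod 4$ using the periods from Proposition~\ref{p:periodesdelscentres} and conclude via the uniqueness of the fixed point of an affine map with nondegenerate rotational linear part. The only cosmetic difference is that you phrase the fixed-point uniqueness via $\det(R^p-\operatorname{Id})\neq 0$ while the paper uses $\operatorname{Rank}(A-\operatorname{Id})=2$, which is the same fact.
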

\begin{proof} We already know that $I_j(X_j)=X_j.$   We write $$F(x,y)=A \cdot \begin{pmatrix}
 x-\operatorname{sign}(y)\\y \end{pmatrix}\mbox{ where }A=R_{{\pi}/{2}}=\begin{pmatrix}0&1\\-1&0
            \end{pmatrix}.$$

 If  $c=2k$, then by using Proposition \ref{p:periodesdelscentres} we have that
         $X_j$ is $(2c+1)$-periodic, hence, using also that $A^4=\operatorname{Id}$ we obtain
        $$
I_j(x,y)=A^{2c+1}\begin{pmatrix} x\\y \end{pmatrix}+v_j=
A^{4k+1}\begin{pmatrix} x\\y \end{pmatrix}+v_j=A\begin{pmatrix} x\\y \end{pmatrix}+v_j
        $$ for a certain $v_j\in\R^2$. Hence $I_j$ is a rotation of order $4$ centered at $X_j$. Since it has a unique fixed point
        (as $\operatorname{Rank}(A-\operatorname{Id})=2$) then the center of this rotation  is $X_j$.

If  $c=2k+1$,  then $X_j$ has period $4c+2,$ hence using that
$A^2=R_{\pi}=-\operatorname{Id}$, we have:
$$
I_j(x,y)=A^{4c+2}\begin{pmatrix} x\\y \end{pmatrix}+w_j=        A^{8k+6}\begin{pmatrix} x\\y \end{pmatrix}+w_j
=A^{2}\begin{pmatrix} x\\y \end{pmatrix}+w_j,
        $$ for a certain $w_j\in\R^2.$ By using the same argument as before, $I_j$ is a rotation of order~$2$ centered at $X_j$.~\end{proof}

To end  the technical results we establish the next lemma which ensures the all the points
in a tile have the same itinerary  of arbitrary length:

\begin{lem}\label{l:itineraris} All the points in a given tile $T_{k,\ell}$ have the same itinerary of length $n\in\mathbb{N}$ for every $n\in\mathbb{N}.$
\end{lem}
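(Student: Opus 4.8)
The plan is to argue by induction on $n$, the crucial geometric input being that each open tile lies entirely on one side of the discontinuity line $\{y=0\}$ and that $F$ carries every tile onto a single tile. I would first record the base case $n=1$. Given $T_{k,\ell}=\{(x,y):k<x<k+1,\ \ell<y<\ell+1\}$, if $\ell\ge 0$ then every point satisfies $y>\ell\ge 0$, so $T_{k,\ell}\subset H_+$; while if $\ell\le -1$ then $y<\ell+1\le 0$, so $T_{k,\ell}\subset H_-$. In either case the address $A$ is constant on the tile, so $\underline I_1$ is constant on $T_{k,\ell}$.

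Next I would propagate this forward using the fact, already established in the proof of the first part of item $(ii)$, that $F$ sends tiles to tiles. Indeed, since $T_{k,\ell}$ is contained in $H_+\setminus\{y=0\}$ or in $H_-$, the restriction of $F$ to the tile equals $F_+$ or $F_-$, which are rotations (hence isometries) preserving the quadrille; therefore $F(T_{k,\ell})=T_{\bar k,\bar\ell}$ for some $(\bar k,\bar\ell)\in\Z^2$. Iterating this, $F^m(T_{k,\ell})$ is again a single tile for every $m\ge 0$.

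Finally I would close the induction. Suppose that for some $n\ge 1$ the map $F^{n-1}$ carries $T_{k,\ell}$ onto a single tile $T'$ and that $\underline I_{n-1}$ is constant on $T_{k,\ell}$. Applying the base case to $T'$, the address $A$ is constant on $T'=F^{n-1}(T_{k,\ell})$, that is, $A(F^{n-1}(x,y))$ takes the same value for all $(x,y)\in T_{k,\ell}$; appending this common symbol to the constant itinerary $\underline I_{n-1}$ shows that $\underline I_n$ is constant on $T_{k,\ell}$. By the previous paragraph $F^n(T_{k,\ell})=F(T')$ is again a single tile, so the inductive hypothesis is restored and the lemma follows for every $n$.

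I expect no serious obstacle here: the only substantive ingredient is the tile-to-tile property, which is already in hand, together with the observation that open tiles never straddle $\{y=0\}$. The one point to keep in mind is that the argument genuinely uses the openness of the tiles, so that the strict inequalities $y>\ell\ge 0$ (or $y<\ell+1\le 0$) hold and no iterate lands exactly on the discontinuity line; this is precisely the content of working inside the zero-free set $\mathcal{U}$.
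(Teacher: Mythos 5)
Your proof is correct, but it follows a genuinely different route from the paper's. The paper argues by contradiction: if two points $p,q$ of a tile had different itineraries of length $n$, it takes the first time $j$ at which their addresses disagree, invokes Lemma \ref{convex} to get that $F^j$ is affine (hence continuous) on the segment $\overline{p\,q}$, and concludes by the intermediate value theorem that some $r\in\overline{p\,q}\subset T_{k,\ell}$ satisfies $F^j(r)\in LC_0$, contradicting the fact that points of an open tile are zero-free. Your argument instead runs a direct induction on $n$, using only the tile-to-tile property of $F$ (already established in the proof of the first part of item $(ii)$) together with the observation that each open tile lies strictly on one side of $\{y=0\}$. Your version is arguably more self-contained: the paper's contradiction step silently uses that the open tiles are contained in the zero-free set $\mathcal{U}$, an inclusion that is only fully articulated in the subsequent proof of item $(i)$ and that itself rests on the same tile-to-tile iteration you make explicit. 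What the paper's convexity-based argument buys in exchange is reusability: the identical segment/intermediate-value mechanism is what later shows that the \emph{boundary} points of the tiles are \emph{not} zero-free (proof of item $(i)$), where a tile-to-tile induction is unavailable, and it transfers verbatim to the hexagonal and triangular tilings of Theorems \ref{th:b} and \ref{th:c}. Both proofs are sound; yours simply front-loads the geometric input rather than the convexity lemma.
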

\begin{proof}
Fix $n\in\mathbb{N}$, and suppose that there exist two points $p$
and $q$ in $T_{k,\ell}$ with different itinerary of length $n$ and
let $j\le n-1$ the first time that $A(F^j(p))\ne A(F^j(q)).$ That is
$p$ and $q$ have the same itinerary of length $j$ but $F^j(p)$ and
$F^j(q)$ have different addresses. From Lemma \ref{convex} we have
that all the points in the  segment $\overline{p\,q}$ have the same
itinerary of length $j$  and therefore $F^j$ restricted to
$\overline{p\,q}$ is continuous. Since $F^j(p)$ and $F^j(q)$ have
different addresses it follows that there exists a point $r\in
\overline{p\,q}$ such that $F^j(r)\in LC_0.$ A contradiction because
since  $T_{k,\ell}$ is also convex, $r\in T_{k,\ell}$ and must be
zero-free.
\end{proof}

We can now prove item $(i),$ that is, the zero-free points
are exactly the points $\mathcal{U}=\mathcal{U}_{\pi/2},$ which
belong to the tiles.

\begin{proof}[Proof of item $(i)$ of Theorem \ref{th:a}]
We have already noticed that the zero-free set is included in
${\mathcal{U}_{\pi/2}}.$ Now we are going to see that the boundaries
of the tiles are formed by points which are not zero-free. Consider
a point $p=(k,y)$ where $\ell\le y\le \ell+1$ for a certain
$k,\ell\in\Z.$ Then~$p$ belongs to the right-boundary of
$T_{k-1,\ell}$ and to the left boundary of $T_{k,\ell}.$ Consider
also  the segment $\overline{r s}$ where $r=(k-1/2, y)$ and
$s=(k+1/2, y).$ From Lemma \ref{l:itineraris} the itinerary of any
length of $r$ coincides with the itinerary of the same length of
$p_{k-1,l}$ and the itinerary of any length of $s$ coincides with
the corresponding itinerary of $p_{k,l}.$ Since $p_{k-1,l}$ and
$p_{k,l}$ have different infinite itineraries, there exists $j$ such
that $\underline{I}_{j}(r)=\underline{I}_{j}(s)$ but $A(F^j(r))\ne A(F^j(s).$ Now from Lemma
\ref{convex} it follows that there exists $t\in \overline{rs}$ such
that $F^j(t)\in LC_0.$ Clearly this point must be $p.$

If we consider a point which belongs to a horizontal boundary of two
consecutive tiles, then its iterate belongs to a vertical
boundary of two consecutive tiles and then we can apply the above
result.
\end{proof}

\begin{proof}[Continuation of the proof of item $(ii)$ of Theorem \ref{th:a}] Consider the tile $T_{k,\ell}$. Let
 $X_j$ be its center. By Proposition \ref{p:periodesdelscentres} it is a $p$-periodic
point, and by Lemma \ref{l:itineraris}  all the points in the tile
have the same itinerary of length $p$, hence $
\left.F^p\right|_{T_{k,\ell}}=I_j. $ Moreover, by Lemma
\ref{l:lemarotacio}, on each tile $I_j$ is a  rotation centered at
$X_j$ of the order established in the statement.

  Assume that $V_{k,\ell}=c$ with $c$ an even number.
 We have already proved that each center $X_j$ in this level set has period $2c+1$ (see
  Proposition \ref{p:periodesdelscentres}). The points in the orbit of $X_j$ are points
   $X_i$ with $i$ having the same parity of $j$ (see Lemma \ref{l:zetamodul}). Hence
    we have two orbits, the first one formed by $X_1,X_3,\ldots,X_{4c+1}$ and the second one formed by $X_2,X_4,\ldots, X_{4c+2}$, and
     therefore we know the period of the centers of the tiles.

The period of all the points of the tiles $T_{k,\ell}$ but the
centers is a consequence that we have proved that $
\left.F^p\right|_{T_{k,\ell}}=I_{p_{k,\ell}}, $ where
$I_{p_{k,\ell}}$ is the itinerary map of $p_{k,\ell}$, which is a
rotation of order $4,$ see again Lemma \ref{l:lemarotacio}.

When $V_{k,\ell}=c$ with $c$ an odd number the proof is similar.
\end{proof}

\subsection{Proof of item $(iii)$ of Theorem \ref{th:a}: dynamics on the non zero-free set}\label{ss:dyn-non-free}

Following the notation introduced in Lemma \ref{l:Nivell}, for any fixed energy level $c$ of $V,$
 there are $4c+2$ tiles  with centers $X_1,X_2,\ldots,X_{4c+2}$. Let us denote $T_j$ to the tile with center $X_j$.
Also, for a fixed energy level  $c$, we denote by $Q_j$ the \emph{closed} square formed by the tile $T_j$
and its boundary, that is $Q_j=T_j\cup\partial T_j$.

For each energy level $c$ even, we will call the squares
$Q_1,Q_3,\ldots, Q_{4c+1}$ \emph{perfect squares} because, as we will see,
these closed squares evolve avoiding the discontinuity effects of $F.$

 Clearly every edge of a square is also an edge of the consecutive square. The perfect squares are positioned
 as Figure \ref{f:nonzerofree2} displays,  the perfect squares being the red ones.
~\begin{figure}[h]
   \centerline{\includegraphics[scale=0.4]{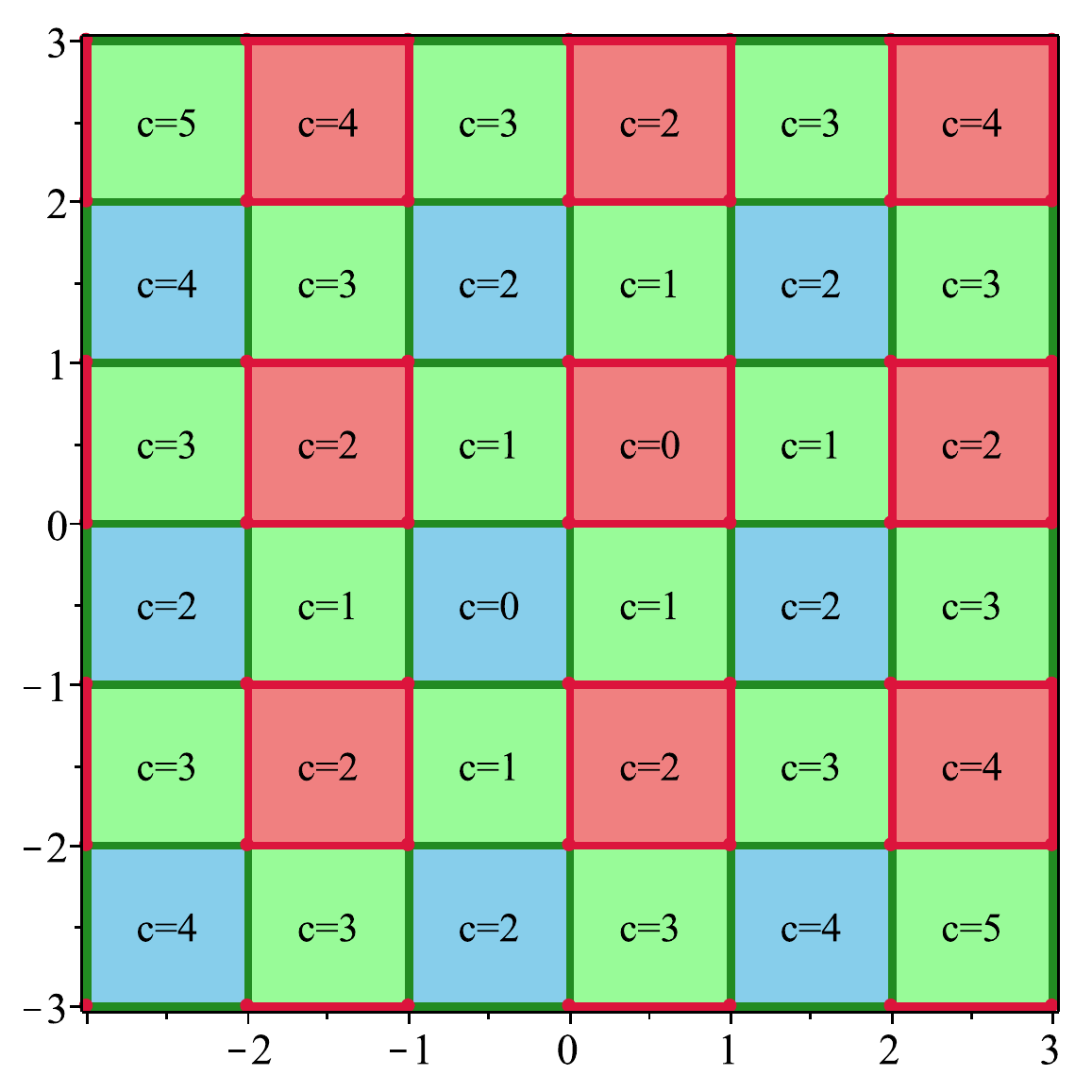}}
   \caption{Position of the perfect squares in red.  The green tiles correspond to odd energy
    levels and the blue ones correspond to the even energy levels such that their corresponding squares are not perfect.
    The borders and the vertices are highlighted with the color of the tile having the same itinerary map. The level set $\{V=c\}$ is indicated in each square.}\label{f:nonzerofree2}
\end{figure}

From the above figure we see that it is enough to prove the periodicity of
 the points on the boundary of the perfect squares and the periodicity of
  the points on the boundary on the squares of odd levels which are not in the boundary of the perfect squares.

Our result also will ensure that the points of the border (including the vertices)
 of any perfect square are periodic with the same period as the points of the tile
   corresponding to the perfect square (excluding the center). Observe that any
    vertex point in $\mathcal{F}$  belongs uniquely to a perfect square. Hence the result will characterize the dynamics of all the vertices.
     The rest of the non-zero points are periodic with the same period as the points of the adjacent tile (excluding the center)
      with odd energy levels. See again Figure \ref{f:nonzerofree2}.

Item $(iii)$ of Theorem \ref{th:a} is a straightforward consequence
of the next theorem.

\begin{teo}\label{t:non zero-fre epi2}   Consider the level set $V=c$.
        \begin{enumerate}[(a)]
    \item If $c$ is an even number, then every point on the boundary of the squares $Q_1,Q_3,\ldots, Q_{4c+1}$ is  a $(8c+4)$-periodic point.

    \item    If $c$ is an odd number, then when $j$ is odd (resp. even) the two horizontal (resp. vertical) edges of $Q_j,$
     without the vertices, are formed by $(8c+4)$-periodic points.
    \end{enumerate}
    \end{teo}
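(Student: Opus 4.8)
The plan is to reduce both parts to the single statement that, on a suitable $I_j$-invariant subset $S_j\subseteq Q_j$, the power $F^{p}$ (with $p$ the period of the center $X_j$, given by Proposition \ref{p:periodesdelscentres}) coincides with the itinerary map $I_j$ of Definition \ref{d:itinerarymap}; here $S_j$ will be the whole closed square in part $(a)$ and the union of the two tracked open edges in part $(b)$. Once this is established, Lemma \ref{l:lemarotacio} closes the argument: for $c$ even $I_j$ is the order-$4$ rotation about $X_j$ and $p=2c+1$, so $F^{4p}=\mathrm{Id}$ on $S_j$ and every point of $S_j$ other than the center has period $4p=8c+4$; for $c$ odd $I_j$ is the half-turn about $X_j$ and $p=4c+2$, so $F^{2p}=\mathrm{Id}$ on $S_j$ and every such point has period $2p=8c+4$. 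The whole difficulty is thus to decide, along the orbit, at which steps the moving closed tile meets the discontinuity line $LC_0=\{y=0\}$ and on which side. The geometric input is Lemma \ref{l:Nivell}: among the $4c+2$ level-$c$ tiles, the only ones whose closure meets $LC_0$ are those with $\ell\in\{0,-1\}$, and a short computation identifies them. The tiles with $\ell=0$ (closure in $\{y\ge 0\}$, uniform address $+$) are exactly $X_1$ and $X_{2c+1}$, both of odd index; the tiles with $\ell=-1$ (top edge on $LC_0$, so the closed square straddles $y=0$ along its two vertical sides) are exactly $X_{2c+2}$ and $X_{4c+2}$, both of even index. I will call the latter the \emph{bad} tiles.

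For part $(a)$, $c$ even, I use that the permutation $i\mapsto i+c$ of $\Z_{4c+2}$ from Lemma \ref{l:zetamodul} preserves parity. Hence the orbit of an odd-indexed center never meets a bad tile, and along the whole orbit the closed square $Q_j$ has uniform address (either $\ell\ge 0$ giving $F_+$, or $\ell\le -2$ giving $F_-$); the only contact with $LC_0$ occurs from above at $X_1,X_{2c+1}$, where $F_+$ is still continuous on the closed tile. By Lemma \ref{convex} each step is then an honest affine map on all of $Q_j$, so $F^{2c+1}\vert_{Q_j}=I_j$ holds on the whole closed square. As $I_j$ is the order-$4$ rotation about $X_j$, every boundary point of $Q_j$—edges and vertices alike—has period $8c+4$.

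For part $(b)$, $c$ odd, the map $i\mapsto i+c$ flips parity, so the single orbit of length $4c+2$ must pass through the bad tiles and the closed square is no longer perfect. The remedy is to track the two horizontal edges when $j$ is odd, and the two vertical edges when $j$ is even, always deleting the four vertices. The key parity count is that both bad indices $2c+2$ and $4c+2$ are even, so—since the modulus $4c+2$ is even—the step $m$ at which $X_j$ reaches a bad tile satisfies $j+mc\equiv 0\ (\mathrm{mod}\,2)$, i.e. $m\equiv j\ (\mathrm{mod}\,2)$ because $c$ is odd. As each application of $F$ is a quarter-turn, after $m$ steps a horizontal edge is horizontal for $m$ even and vertical for $m$ odd. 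Combining, at every bad step the tracked edge has rotated into an open vertical side of an $\ell=-1$ tile, lying in $\{-1<y<0\}$ with uniform address $-$, exactly matching the center; at all other steps uniform address is automatic as in $(a)$. Hence the two tracked open edges share the length-$(4c+2)$ itinerary of the center, so $F^{4c+2}$ restricted to their union equals the half-turn $I_j$, which interchanges them; applying it twice gives $F^{2(4c+2)}=\mathrm{Id}$ and thus period $8c+4$ for every non-vertex point on them.

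The step I expect to be the main obstacle is the parity bookkeeping of part $(b)$: one must be sure that at each of the two bad steps the tracked edge is in the vertical position, so that its open part avoids $LC_0$, and that nothing goes wrong at the remaining steps, including the harmless contacts from above at $X_1$ and $X_{2c+1}$. The deletion of the vertices is essential precisely here, since the excluded corners are the points that sit on $LC_0$ at the bad steps. Everything hinges on the two elementary facts that both bad indices are even and that $c$ is odd, which together fix the parity of the bad steps; the rest is the rigid-motion argument already packaged in Lemmas \ref{convex}, \ref{l:itineraris} and \ref{l:lemarotacio}.
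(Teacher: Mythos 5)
Your proposal is correct and follows essentially the same route as the paper: for $c$ even you use that $i\mapsto i+c$ preserves parity so the odd-indexed closed squares never meet the squares straddling $LC_0$ (the paper's ``perfect squares'' argument via Remark \ref{r:dinamicaquadrats}), and for $c$ odd you use exactly the paper's two observations — that the straddling squares $Q_{2c+2}$, $Q_{4c+2}$ have even index, forcing the hitting time $m$ to have the parity of $j$, and that the quarter-turn linear part swaps horizontal and vertical edges according to the parity of $m$ — to conclude that the tracked open edges share the center's itinerary and hence carry the rotation $I_j$. The only cosmetic difference is that you phrase the conclusion via itineraries and Lemma \ref{convex} where the paper phrases it via invariance of the sets $\widetilde{Q}_j$ and continuity; the content is identical.
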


Prior to proving the result we stress the following fact:
\begin{nota}\label{r:dinamicaquadrats}
On every point in $H_+$ the map $F=F_+$. Hence, for all
$j=1,2,\ldots ,2c+1,$ we have that $F(Q_j)=F_+(Q_j)=Q_{i}$ with
$i\equiv j+c$ mod $4c+2$  (see Lemmas \ref{l:zetamodul} and
\ref{l:itineraris}). Analogously, for $i=2c+3,\ldots,4c+1$ we have
$F(Q_j)=F_-(Q_j)=Q_{i}$ with $i\equiv j+c$ mod $4c+2$, since these
squares are contained in $H_-$. \emph{Observe, however, that the
situation for the squares $Q_{2c+2}$ and $Q_{2c+4}$ is quite
different because on the \emph{top} edge of these two squares
$F=F_+$ while in the rest of the square $F=F_-$.} In  Figure
\ref{f:nonzerofree}, we display the position of the tiles
corresponding with the centers $X_1$, $X_{2c+1}$, $X_{2c+2}$ and
$X_{4c+2}$ with respect to the discontinuity line $LC_0$.
    \end{nota}~
\begin{figure}[H]
   \centerline{\includegraphics[scale=0.5]{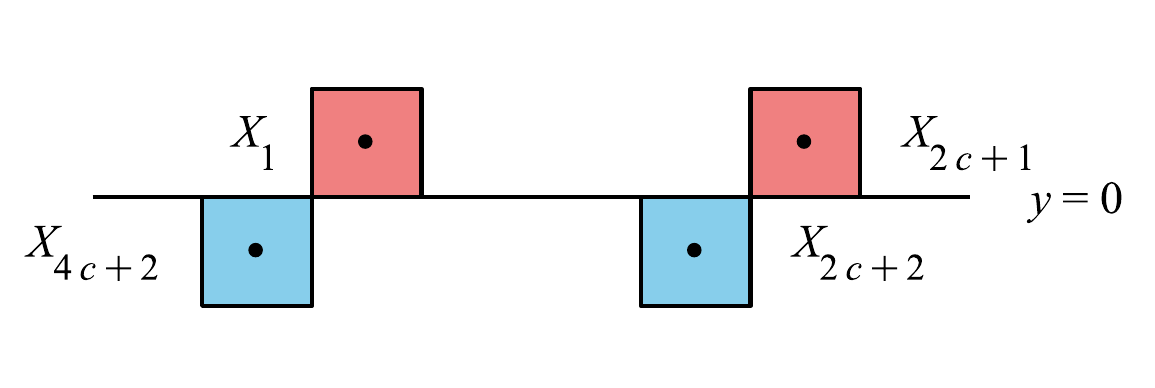}}
    \caption{Position of the squares $Q_1$, $Q_{2c+1}$, $Q_{2c+2}$ and $Q_{4c+2}$, together with its centers,
     for any level set $V=c.$}\label{f:nonzerofree}
\end{figure}

\begin{proof}[Proof of Theorem \ref{t:non zero-fre epi2}]
    Consider the squares $Q_1,Q_3,\ldots, Q_{4c+1}$ for $c$ even, that is, the perfect squares on this level.
     The only squares in this particular collection $Q_j$ with $j$ odd which intersect $y=0$ are $Q_1$ and $Q_{2c+1}.$
      But from Remark \ref{r:dinamicaquadrats} we know that $F(Q_j)=Q_{k}$ with $k\equiv j+c$ mod $4c+2$ for all $j$ odd,
       including the cases with $j=1$ and $j=2c+1.$ In particular this implies that this set of squares is invariant.
        In consequence, by continuity, the points in the boundary of $Q_j$ inherit the dynamics of the points in
         $T_j\setminus X_j$ and, therefore, they are periodic with period $4(2c+1)$. Furthermore, $\left.F^{2c+1}\right|_{Q_j}$ is a rotation of order 4.

    Now assume that $c$ is odd. We notice that the squares $Q_1,Q_3,\ldots ,Q_{4c+1}$
     (resp. $Q_2,Q_4,$ $\ldots,Q_{4c+2}$) share every vertical (resp. horizontal)
      edge with an edge of a perfect square, which we already know is periodic. Hence
       we need to follow the dynamics of their horizontal (resp. vertical) edges or, in other
       words, the dynamics of $\widetilde{Q}_j=Q_j\setminus \{$ its vertical edges, including the vertices$\}$
        (resp. $\widetilde{Q}_j=Q_j\setminus \{$its horizontal edges, including the vertices$\}$).

    Since now $X_1,X_2,\ldots ,X_{4c+2}$ belong to the same periodic orbit,
     the set of corresponding squares contains $\widetilde{Q}_{2c+2}$ and $\widetilde{Q}_{4c+2}.$
The result will be proved if we can ensure the invariance of the set
of squares $\widetilde{Q}_j$.  In order to do this, we must ensure
that the edges we are studying are not pre-images of the top edges
of the squares $Q_{2c+2}$ and $Q_{4c+2}.$  So, first, we study for
which values of $p,$ $F^p(X_j)=X_{2c+2}$ or $F^p(X_j)=X_{4c+2}.$
    \begin{itemize}
        \item From Lemma \ref{l:zetamodul} we have $F^p(X_j)=X_{2c+2}$ if and only if $j+pc\equiv 2c+2$ mod
        $4c+2$, that is if there exists  $n\in\N$ such that $j+pc=2c+2+n(4c+2).$ Hence $j+pc$ is an even
        number and since $c$ is odd we get that $p$ and $j$ have the same parity.
        \item Analogously, $F^p(X_j)=X_{4c+2}$ if and only if $j+pc\equiv 0$
        mod  $4c+2$, which means that there exists $ n\in\N$ such that $ j+pc=n(4c+2).$ As before $p$ and $j$ have the same parity.
    \end{itemize}
Assume that $j$ is odd,  and let the $p$-iterate of $Q_j$ be the
first one that  reaches $Q_{2c+2}$ (or $Q_{4c+2}$). Since it is the
first time that the images of  $Q_j$ intersect $\{y=0\}$, we can
still apply the arguments in the proof of  Lemma \ref{l:lemarotacio}
and therefore $F^p|_{Q_j}$ is an even-order rotation. Thus, since
$p$ is also odd, the horizontal edges of $Q_j$ are mapped via $F^p$
to the vertical edges of $Q_{2c+2}$ (or $Q_{4c+2}$). Therefore
$F^p(\widetilde{Q}_j)=\widetilde{Q}_{2c+2}$ (or
$\widetilde{Q}_{4c+2}$). It implies that for all $j$ odd,
$F(\widetilde{Q}_j)=\widetilde{Q}_{j+c}.$

The same arguments work when $j$ is even: $p$ is even too and $F^p$
sends the vertical edges of $Q_j$ to the vertical  edges of
$Q_{2c+2}$ (or $Q_{4c+2}$). So also
$F(\widetilde{Q}_j)=\widetilde{Q}_{j+c}$ for every $j$ even.

The condition $F(\widetilde{Q}_j)=\widetilde{Q}_{j+c}$ implies that,
by continuity, the points in the edges under study of $Q_j$
inherit the dynamics of the points in  $T_j\setminus X_j$, which are
periodic with period $2(4c+1)$.
\end{proof}

Consider the square $Q_{k,\ell}=T_{k,\ell}\cup\partial T_{k,\ell}$,
and set $c=V_{k,\ell}$; then, we will say that the  square has
odd label (resp. even label) if $Q_{k,\ell}=Q_j$ for an odd  value
$j$ (resp. even) in the order introduced in Lemma \ref{l:Nivell}.
Observe that, in particular, with the proof of Theorem~\ref{t:non
zero-fre epi2} we also have proved the following result that gives
the dynamics of $F$ on all the points in~$\mathcal{F}$.

\begin{corol}\label{t:teo4}
Consider the square $Q_{k,\ell}=T_{k,\ell}\cup\partial T_{k,\ell}$,
and set $c=V_{k,\ell}$, then:
\begin{enumerate}[(a)]
\item If $c$ is even, and the square has odd label then $Q_{k,\ell}$
 is invariant under the action of the map $F^{2c+1}$, and $\left.F^{2c+1}\right|_{Q_{k,\ell}}$ is a rotation of order $4$
 centered at $p_{k,\ell}$; as a consequence the edges of these $Q_{k,\ell}$ are formed by $4(2c+1)-$periodic points.

\item If $c$ is odd, and the square has odd label (resp. even label) then the horizontal  (resp. vertical)
edges (excluding the vertices) are invariant under the action of the
map $F^{4c+2}$ which is also  a  rotation of order $2$ centered at
$p_{k,\ell}$ on that edges (excluding the vertices); as a consequence the edges of these squares which are not edges of a perfect square are formed by $2(4c+2)-$periodic points.

\end{enumerate}
\end{corol}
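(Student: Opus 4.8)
The plan is to obtain the statement by simply reorganizing, square by square, what was already established inside the proof of Theorem~\ref{t:non zero-fre epi2}; no new computation is required. Throughout I write $c=V_{k,\ell}$, and I let $X_j=p_{k,\ell}$ be the center of $Q_{k,\ell}=Q_j$, where $j$ is the label of this square in the ordering of Lemma~\ref{l:Nivell}.

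For part~(a) I assume $c$ even and $j$ odd, so $Q_{k,\ell}$ is a perfect square. First I would recall that $X_j$ is $(2c+1)$-periodic by Proposition~\ref{p:periodesdelscentres}, and that by Lemma~\ref{l:itineraris} every point of the closed tile shares the itinerary of length $2c+1$ of its center; since a perfect square evolves without meeting $LC_0$, the map $F^{2c+1}$ is affine on all of $Q_{k,\ell}$ and there coincides with the itinerary map $I_j$. By Lemma~\ref{l:lemarotacio}, $I_j$ is a rotation of order~$4$ about $p_{k,\ell}$, and such a rotation carries the square onto itself; this yields both the invariance of $Q_{k,\ell}$ under $F^{2c+1}$ and the description of $F^{2c+1}|_{Q_{k,\ell}}$. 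The periodicity of the edge points is then nothing but Theorem~\ref{t:non zero-fre epi2}(a), read as $8c+4=4(2c+1)$.

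For part~(b) I assume $c$ odd. Now Proposition~\ref{p:periodesdelscentres} gives that $X_j$ is $(4c+2)$-periodic and Lemma~\ref{l:lemarotacio} that $I_j$ is a rotation of order~$2$ about $p_{k,\ell}$. In the proof of Theorem~\ref{t:non zero-fre epi2}(b) the key relation $F(\widetilde Q_j)=\widetilde Q_{j+c}$ was shown, where for $j$ odd (resp.\ even) $\widetilde Q_j$ is $Q_j$ with its vertical (resp.\ horizontal) edges and vertices removed; hence $F^{4c+2}$ restricted to $\widetilde Q_j$ is affine and equals $I_j$. Restricting this order-$2$ rotation to the horizontal edges of an odd-labeled square, or the vertical edges of an even-labeled one --- precisely the edges not shared with a perfect square --- gives the claimed invariance under $F^{4c+2}$ and the rotation structure; its only fixed point $p_{k,\ell}$ lies in the interior, off these edges. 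The period $8c+4=2(4c+2)$ of these edge points is again Theorem~\ref{t:non zero-fre epi2}(b).

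The only genuinely delicate ingredient --- and the one already carried out inside the proof of Theorem~\ref{t:non zero-fre epi2} --- is the boundary bookkeeping: one must control exactly which edges of a square avoid the discontinuity line $LC_0$ under iteration, namely the perfect-square cycle when $c$ is even, and, when $c$ is odd, the fact that the order-$2$ rotation maps the surviving horizontal edges to vertical ones before the orbit can reach $\{y=0\}$. Once that invariance is in hand, identifying $F^{2c+1}$ (resp.\ $F^{4c+2}$) with the itinerary map and invoking Lemma~\ref{l:lemarotacio} closes the argument at once.
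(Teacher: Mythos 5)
Your proposal is correct and follows the paper's route exactly: the paper itself presents Corollary~\ref{t:teo4} as already established within the proof of Theorem~\ref{t:non zero-fre epi2}, and you reassemble precisely those ingredients (Proposition~\ref{p:periodesdelscentres}, Lemmas~\ref{l:itineraris} and~\ref{l:lemarotacio}, the relation $F(Q_j)=Q_{j+c}$ for perfect squares, and $F(\widetilde{Q}_j)=\widetilde{Q}_{j+c}$ in the odd-level case). One small slip in your closing remark: the map carrying the surviving horizontal edges onto vertical ones at the first encounter with $Q_{2c+2}$ or $Q_{4c+2}$ is $F^p$ with $p$ odd, i.e.\ an odd power of the order-$4$ rotation matrix, not the order-$2$ rotation $I_j$ (which preserves edge directions); since you explicitly defer that bookkeeping to the theorem's proof, this does not affect the argument.
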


Remember that  if the square has even energy level and even label we
treat their boundaries as being  part of the boundary of the
adjacent odd-energy level tile.

\subsection{Proof of item $(iv)$ of Theorem \ref{th:a}}

The proof simply follows by collecting the results of the previous
items.

\section{Proof of Theorem \ref{th:b}}\label{s:alpha2pi3}

\subsection{Preliminaries}

For each tile $T_{k,\ell,m},$ we start determining $m$ in terms of
$k$ and $\ell.$

\begin{lem}\label{l:klm}
Any point $(x,y)\in \mathcal{U}={\mathcal{U}_{2\pi/3}}$ belongs to a
tile $T_{k,\ell,m}$ where either, $m=k+\ell$ or     $m=k+\ell+1$ or
$m=k+\ell+2.$
\end{lem}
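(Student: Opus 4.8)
The plan is to prove Lemma~\ref{l:klm} by a direct calculation, exploiting the explicit formulas for $k=B(x,y)$, $\ell=C(y)$ and $m=D(x,y)$ in terms of the floor function, and then bounding the quantity $m-k-\ell$ from both sides. Concretely, one has
\[
k+\ell=\operatorname{E}\left(\frac{3x-\sqrt3\,y}{6}\right)+\operatorname{E}\left(\frac{\sqrt3\,y}{3}\right)
\quad\text{and}\quad
m=\operatorname{E}\left(\frac{3x+\sqrt3\,y+3}{6}\right),
\]
so the claim $m\in\{k+\ell,k+\ell+1,k+\ell+2\}$ is equivalent to the two-sided estimate $0\le m-k-\ell\le 2$. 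The natural tool is the elementary inequality $z-1<\operatorname{E}(z)\le z$, valid for every real $z$, which lets me replace each floor by its argument up to an explicit error in $[0,1)$.

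First I would write $a=(3x-\sqrt3\,y)/6$, $b=\sqrt3\,y/3$ and $d=(3x+\sqrt3\,y+3)/6$, and note the exact linear relation $d=a+b/2+\tfrac12$ among the arguments (a quick check: $a+b/2=(3x-\sqrt3 y)/6+(\sqrt3 y)/6=3x/6=x/2$, and indeed $d=(3x+\sqrt3 y+3)/6$, so I would instead record the correct combination, namely $a+b=(3x+\sqrt3 y)/6=d-\tfrac12$, giving $d=a+b+\tfrac12$). Thus $m-k-\ell=\operatorname{E}(d)-\operatorname{E}(a)-\operatorname{E}(b)$ with $d=a+b+\tfrac12$. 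Using $\operatorname{E}(a)>a-1$, $\operatorname{E}(b)>b-1$ and $\operatorname{E}(d)\le d=a+b+\tfrac12$ gives the upper bound $m-k-\ell< (a+b+\tfrac12)-(a-1)-(b-1)=\tfrac52$, hence $m-k-\ell\le 2$. Symmetrically, $\operatorname{E}(d)>d-1=a+b-\tfrac12$ together with $\operatorname{E}(a)\le a$, $\operatorname{E}(b)\le b$ gives $m-k-\ell>(a+b-\tfrac12)-a-b=-\tfrac12$, hence $m-k-\ell\ge 0$. Combining, $m-k-\ell\in\{0,1,2\}$, which is exactly the assertion.

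The only genuine subtlety is bookkeeping: since $(x,y)\in\mathcal{U}_{2\pi/3}$ lies off the grid $\mathcal{F}_{2\pi/3}$, the arguments $a,b,d$ are never integers, so the floor inequalities are strict where needed and no boundary case can force $m-k-\ell$ outside $\{0,1,2\}$; I would remark that this is why the lemma is stated on $\mathcal{U}$ rather than on all of $\R^2$. I expect the main (very mild) obstacle to be simply verifying the exact affine relation $d=a+b+\tfrac12$ between the three floor-arguments, since an arithmetic slip there would shift the whole estimate; once that identity is pinned down, the two floor inequalities close the argument immediately. Finally, the fact that each such $(x,y)$ lies in exactly one tile $T_{k,\ell,m}$ is immediate from the definitions of $B,C,D$ as floor functions, so the three admissible values of $m$ genuinely partition $\mathcal{U}_{2\pi/3}$ into the triangles and hexagons described before the statement.
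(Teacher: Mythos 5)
Your proof is correct and follows essentially the same route as the paper's: both hinge on the identity $(3x+\sqrt{3}y+3)/6=a+b+\tfrac12$ with $a=(3x-\sqrt{3}y)/6$, $b=\sqrt{3}y/3$, and then sandwich the third floor between $k+\ell+\tfrac12$ and $k+\ell+\tfrac52$. (A minor remark: your generic floor bounds $z-1<\operatorname{E}(z)\le z$ already give $-\tfrac12<m-k-\ell<\tfrac52$ unconditionally, so the non-integrality of $a,b,d$ on $\mathcal{U}$ is not actually needed for this particular conclusion.)
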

\begin{proof}
From the inequalities $ k< {x}/{2}-{\sqrt{3}y}/{6}<k+1$ and $\ell<
{\sqrt{3} y}/{3}<\ell+1$ it follows that
${1}/{2}+k+\ell<{x}/{2}+{\sqrt{3} y}/{6}+{1}/{2}<k+\ell+{5}/{2}.$
Hence $m=\operatorname{E}({x}/{2}+{\sqrt{3} y}/{6}+{1}/{2})$ is
either $m=k+\ell,$ $m=k+\ell+1$ or $m=k+\ell+2.$
\end{proof}
In fact, the set of points satisfying $ k<
{x}/{2}-{\sqrt{3}y}/{6}<k+1$ and $\ell< {\sqrt{3} y}/{6}<\ell+1$
form a parallelogram whose sides are
$y=\sqrt{3}\ell\,,y=\sqrt{3}(\ell+1)\,,\,y=\sqrt{3} (x-2k)$ and
$y=\sqrt{3}(x-2k-2).$ From the proof of the above lemma we see that
\begin{enumerate}[(a)]
    \item $m=k+\ell \Leftrightarrow k+\ell+{1}/{2}<{x}/{2}+{\sqrt{3}y}/{6}+{1}/{2}<k+\ell+1 \Leftrightarrow
    \sqrt{3}(-x+2k+2\ell)<y<\sqrt{3}(-x+2k+2\ell+1)$
        \item $m=k+\ell+1 \Leftrightarrow k+\ell+1<{x}/{2}+{\sqrt{3}y}/{6}+{1}/{2}<k+\ell+2\Leftrightarrow\sqrt{3}
        (-x+2k+2\ell+1)<y<\sqrt{3}(-x+2k+2\ell+3)$
        \item $m=k+\ell+2 \Leftrightarrow k+\ell+2<{x}/{2}+{\sqrt{3}y}/{6}+{1}/{2}<k+\ell+\frac{5}{2}\Leftrightarrow\sqrt{3}
        (-x+2k+2\ell+3)<y<\sqrt{3}(-x+2k+2\ell+4)$
\end{enumerate}
Denoting by
$L_1=\{y=\sqrt{3}(-x+2k+2\ell)\},L_2=\{y=\sqrt{3}(-x+2k+2\ell+1)\},L_3=\{y=\sqrt{3}(-x+2k+2\ell+3)\}$
and $L_4=\{y=\sqrt{3}(-x+2k+2\ell+4)\},$ we can draw its graphics in
Figure \ref{f:Tiles}.

\begin{figure}[H]
\centerline{\includegraphics[scale=0.45]{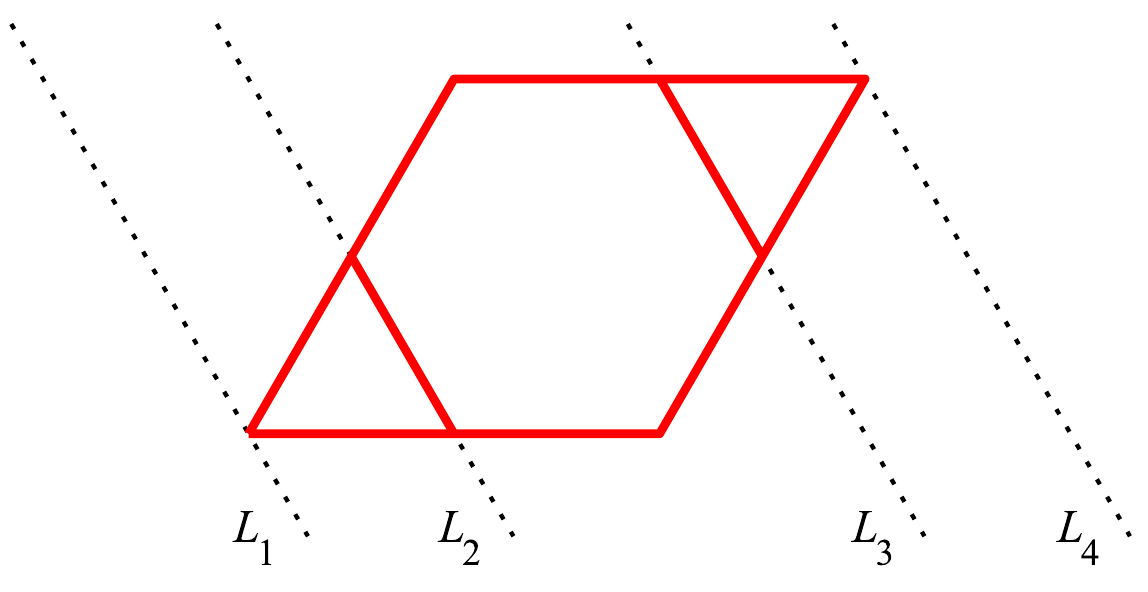}}
    \caption{The tiles  $T_{k,\ell,k+\ell}\cup T_{k,\ell,k+\ell+1}\cup T_{k,\ell,k+\ell+2}$    when $\alpha={2\pi}/{3}.$}\label{f:Tiles}
\end{figure}

In Figure \ref{f:Tiles}, we can see that the parallelogram has been
partitioned into three sets: two triangles and one hexagon. Thus, we
have proved the following lemma, where we use the notation
introduced in Section \ref{ss:23}.

\begin{lem}\label{l:centroid1}
    Let $T_{k,\ell,m}$ be one tile of  $\mathcal{U}_{2\pi/3}=\mathcal{U}$.
    \begin{enumerate}[(a)]
        \item If $m=k+\ell$ or $m=k+\ell +2$, then $T_{k,\ell,m}$
        is a triangle whose center is either $q_{k,\ell}$ or  $r_{k,\ell},$
        respectively.
        \item If $m=k+\ell +1$ then $T_{k,\ell,m}$
        is a hexagon whose center  is
        $p_{k,\ell}.$
    \end{enumerate}
\end{lem}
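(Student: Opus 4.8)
The plan is to prove Lemma~\ref{l:centroid1} by a direct computation, leveraging the partition of the parallelogram already established in the discussion preceding the statement. The three cases $m=k+\ell$, $m=k+\ell+1$ and $m=k+\ell+2$ correspond exactly to the three horizontal strips cut out by the lines $L_1,L_2,L_3,L_4$ in Figure~\ref{f:Tiles}, intersected with the two families of oblique lines $y=\sqrt{3}(x-2k)$, $y=\sqrt{3}(x-2k-2)$ and the two horizontal lines $y=\sqrt{3}\ell$, $y=\sqrt{3}(\ell+1)$ that bound the parallelogram. So the real content is: (1) identify which of these bounding lines actually meet each strip, hence whether the resulting tile is a triangle or a hexagon; and (2) compute the centroid of the resulting polygon and check it equals $q_{k,\ell}$, $p_{k,\ell}$ or $r_{k,\ell}$ respectively.

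First I would treat the hexagonal case $m=k+\ell+1$, since the symmetry makes it cleanest. Here the tile is bounded by all six lines: the two horizontals $y=\sqrt{3}\ell$, $y=\sqrt{3}(\ell+1)$, the two lines of slope $\sqrt{3}$, and the two lines $L_2$, $L_3$ of slope $-\sqrt{3}$. I would compute the six vertices by pairwise intersection of consecutive bounding lines, verify the hexagon is regular (all three pairs of opposite sides are parallel and equidistant), and then average the six vertices to obtain the center. By the symmetry of a regular hexagon the centroid is just this average, and I expect it to simplify to $p_{k,\ell}=\bigl(2k+\ell+3/2,\sqrt{3}(\ell+1/2)\bigr)$. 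For the two triangular cases I would do the analogous but shorter computation: in the strip $m=k+\ell$ only three of the bounding lines are active (the strip is a thin sliver near one corner of the parallelogram), they bound an equilateral triangle, and averaging its three vertices gives $q_{k,\ell}=\bigl(2k+\ell+1/2,\sqrt{3}(\ell+1/6)\bigr)$; symmetrically the strip $m=k+\ell+2$ yields the triangle with center $r_{k,\ell}=\bigl(2k+\ell+5/2,\sqrt{3}(\ell+5/6)\bigr)$. The factor $1/6$ versus $5/6$ in the vertical coordinate is exactly what one expects from the centroid of a triangle sitting at one-third height within a strip of height $\sqrt{3}$.

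The main obstacle, such as it is, is purely bookkeeping: one must be careful that the ``center'' claimed in the statement is genuinely the geometric center (centroid) of each tile, and that the vertex computations use the correct pair of lines in each strip rather than accidentally including a line that does not form an edge of that particular tile. I would organize this by first writing down, for each of the three strips (a), (b), (c) from the displayed characterization, the explicit list of bounding lines, then solving the small linear systems for the vertices. Since every line has slope $0$, $\sqrt{3}$ or $-\sqrt{3}$, each intersection is an elementary computation, and the verification that the claimed centers satisfy all the defining strict inequalities of $T_{k,\ell,m}$ is immediate. I would close by remarking that the regularity of the hexagon and the equilateral shape of the triangles follow from the fact that the three line-directions are at mutual angles of $60^\circ$, so the tessellation is the standard trihexagonal one, consistent with the identification $3.6.3.6$ made in Section~\ref{ss:23}.
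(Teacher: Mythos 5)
Your proposal is correct and follows essentially the same route as the paper: the paper derives the three strip characterizations (a)--(c) via the lines $L_1,\dots,L_4$ and then reads the lemma off Figure~\ref{f:Tiles}, whereas you simply make explicit the vertex and centroid computations that the paper leaves to the picture. The arithmetic checks out (e.g.\ for $k=\ell=0$, $m=1$ the six vertices average to $\left(3/2,\sqrt{3}/2\right)=p_{0,0}$, and the triangle for $m=0$ has centroid $\left(1/2,\sqrt{3}/6\right)=q_{0,0}$), so this is a sound, slightly more rigorous write-up of the paper's argument.
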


\subsection{Proof of items $(i)$ and $(ii)$: dynamics on the zero-free set}

As in the case $\alpha=\pi/2$ we split the proof of these two items
into several lemmas and propositions. Here there is an added
difficulty, there are tiles with hexagonal shape and others with
triangular shape. We study them separately.

\subsubsection{Dynamics on the hexagonal tiles. The case $m=k+\ell+1$} From Lemma \ref{l:centroid1}
the tile $T_{k,\ell,k+\ell+1}$ is a regular hexagon. Set
$V_{k,\ell}$ for the value of $V$ on  $T_{k,\ell,k+\ell+1}$. Then $
V_{k,\ell}=V_{k,\ell,k+\ell+1}=\max
\left(|2k+1|,2|k+\ell+1|-1,|2\ell+1| \right). $ The next two results
characterize the set of centroids of the hexagons, that is their
number and  geometric locus, for this case.
\begin{lem}\label{l:hexagons}
    Let $m=k+\ell+1$ and
    $p_{k,\ell}=\left(2k+\ell+{3}/{2},\sqrt{3}\left(\ell+{1}/{2}\right)\right).$
    Then
    \begin{enumerate}[(a)]
        \item $V(p_{k,\ell})=c$ is an odd number.
        \item The set $\{p_{k,\ell}:V_{k,\ell}=c\}$ has $3c+1$ points. In particular there are $3c+1$
        hexagons $T_{k,\ell,k+\ell+1}$  in this energy level.
        \item The points $p_{k,\ell}$ with $V_{k,\ell}=c$ lie in the irregular hexagon determined by the intersection
        of the straight lines $y=\sqrt{3}(x\pm c)$, $y=\pm {\sqrt{3}c}/{2}$ and $y=\sqrt{3}(-x\pm(1+c)),$ see Figure \ref{f:figurahexagonc3}.
    \end{enumerate}
\end{lem}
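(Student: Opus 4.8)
The plan is to prove the three items in the order (a), (c), (b), since the coordinate description underlying (c) makes the counting in (b) transparent. Throughout I use the explicit value on a hexagonal tile,
\[
V_{k,\ell}=\max\bigl(|2k+1|,\ 2|k+\ell+1|-1,\ |2\ell+1|\bigr),
\]
which follows by substituting $m=k+\ell+1$ into \eqref{e:Vklm}. Item (a) is then immediate: $|2k+1|$ and $|2\ell+1|$ are odd, while $2|k+\ell+1|-1$ is an even number minus one, hence also odd; the maximum of odd integers is odd, so $V(p_{k,\ell})=c$ is odd.

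For (c) I would pass to the coordinates of the center itself. Writing $p_{k,\ell}=(x,y)$ with $x=2k+\ell+\tfrac32$ and $y=\sqrt{3}(\ell+\tfrac12)$, a direct substitution gives $2k+1=x-\tfrac{y}{\sqrt{3}}$, $2\ell+1=\tfrac{2y}{\sqrt{3}}$ and $2(k+\ell+1)=x+\tfrac{y}{\sqrt{3}}$, so that
\[
V_{k,\ell}=\max\Bigl(\Bigl|x-\tfrac{y}{\sqrt{3}}\Bigr|,\ \Bigl|x+\tfrac{y}{\sqrt{3}}\Bigr|-1,\ \tfrac{2|y|}{\sqrt{3}}\Bigr).
\]
Consequently the sublevel region $\{V\le c\}$ is exactly the set cut out by $|x-\tfrac{y}{\sqrt{3}}|\le c$, $|x+\tfrac{y}{\sqrt{3}}|\le c+1$ and $\tfrac{2|y|}{\sqrt{3}}\le c$, whose six bounding lines are precisely $y=\sqrt{3}(x\pm c)$, $y=\sqrt{3}(-x\pm(1+c))$ and $y=\pm\tfrac{\sqrt{3}c}{2}$. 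Since a center with $V_{k,\ell}=c$ saturates at least one of these three inequalities, it lies on the boundary of this irregular hexagon, which is the assertion of (c) and matches Figure~\ref{f:figurahexagonc3}.

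For (b) I would count lattice points $(k,\ell)\in\Z^2$. Because $c$ is odd, $\tfrac{c+1}{2}\in\Z$ and $V_{k,\ell}\le c$ is equivalent to the three integral constraints $-\tfrac{c+1}{2}\le k\le\tfrac{c-1}{2}$, $-\tfrac{c+1}{2}\le\ell\le\tfrac{c-1}{2}$ and $-\tfrac{c+3}{2}\le k+\ell\le\tfrac{c-1}{2}$. After the shift $k'=k+\tfrac{c+1}{2}$, $\ell'=\ell+\tfrac{c+1}{2}$ this becomes counting the pairs $(k',\ell')\in\{0,\dots,c\}^2$ with $\tfrac{c-1}{2}\le k'+\ell'\le\tfrac{3c+1}{2}$. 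Removing from the $(c+1)^2$ points of the square the two corner triangles $k'+\ell'\le\tfrac{c-3}{2}$ and $k'+\ell'\ge\tfrac{3c+3}{2}$ — each carrying $\tfrac{c^2-1}{8}$ points, equal by the symmetry $(k',\ell')\mapsto(c-k',c-\ell')$ — yields $N(c):=\#\{V\le c\}=\tfrac{(3c+5)(c+1)}{4}$. As $V$ attains only odd values, the number of hexagon centers with $V_{k,\ell}=c$ equals $N(c)-N(c-2)=3c+1$ (the formula also gives $N(-1)=0$, covering $c=1$), establishing (b).

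I expect the lattice count in (b) to be the only genuinely delicate step: the parity of $c$ must be used to keep every bound integral, and the two triangular corner counts must be evaluated and shown equal. The cleanest route is the telescoping $N(c)-N(c-2)$, avoiding a direct enumeration of the boundary points of the hexagon from (c).
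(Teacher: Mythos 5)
Your proposal is correct, and all three steps check out: the identity $V_{k,\ell}=\max(|2k+1|,\,2|k+\ell+1|-1,\,|2\ell+1|)$ follows from \eqref{e:Vklm} with $m=k+\ell+1$; the substitutions $2k+1=x-y/\sqrt{3}$, $2(k+\ell+1)=x+y/\sqrt{3}$, $2\ell+1=2y/\sqrt{3}$ are exact; and the lattice count $N(c)=(c+1)^2-2\cdot\frac{c^2-1}{8}=\frac{(c+1)(3c+5)}{4}$ together with the telescoping $N(c)-N(c-2)=3c+1$ is arithmetically right (the parity of $c$ keeps all bounds integral, and oddness of the range of $V$ justifies the telescoping, as you note). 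However, your route is genuinely different from the paper's. The paper proves all three items simultaneously by a six-fold case analysis on the signs of $2k+1$, $2\ell+1$ and $k+\ell+1$: in each case it identifies which term of the maximum dominates, deduces $c=-2k-1$ (etc.) hence odd, writes the centers explicitly as $\bigl(-c+\ell+1/2,\sqrt{3}(\ell+1/2)\bigr)$ and so on, places them on one specific side of the hexagon, and counts $\frac{c+1}{2}$ or $\frac{c-1}{2}$ points per side, summing to $3c+1$. Your argument is shorter and less error-prone -- (a) by pure parity of the three terms, (c) by recognizing $\{V\le c\}$ as a convex region cut out by three pairs of parallel lines, (b) by inclusion--exclusion on a square minus two corner triangles. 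What the paper's heavier computation buys is the explicit per-side parametrization of the centers (which side each $p_{k,\ell}$ lies on and how many sit on each of the six lines); this bookkeeping is reused immediately afterwards to define the clockwise labeling $X_1,\ldots,X_{3c+1}$ and to prove Proposition \ref{p:zmodul2}, where $F_{\pm}$ is tracked side by side. Your count does not deliver that parametrization directly, so if this lemma were replaced by your proof, the subsequent labeling argument would need the side-by-side data to be extracted separately.
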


\begin{figure}[H]
    \centerline{\includegraphics[scale=0.40]{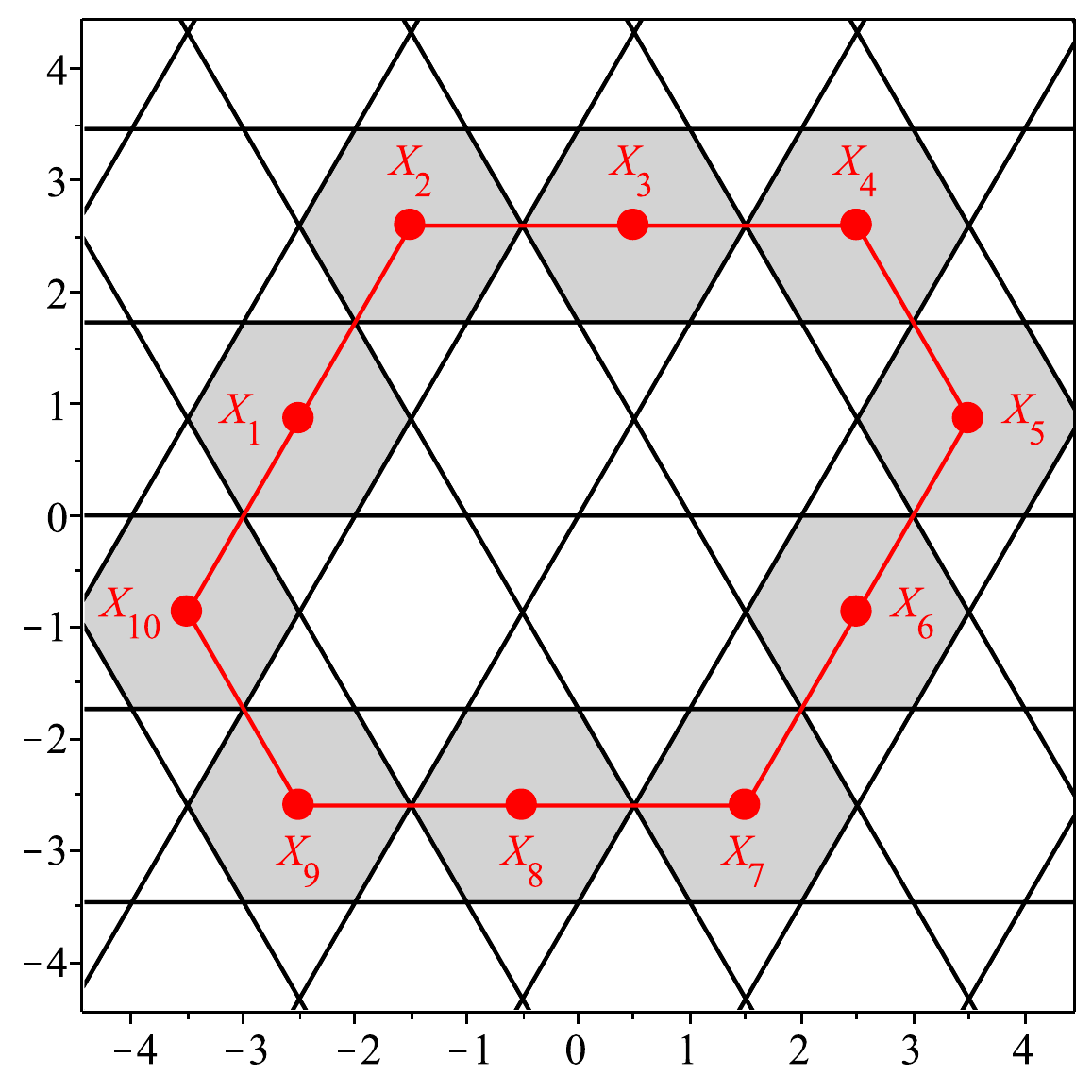}}
    \caption{The centers  points $p_{k,\ell}$  in the level $c=3.$}\label{f:figurahexagonc3}
\end{figure}

\begin{proof} Since $V_{k,\ell}$ depends on the signs of $2k+1,$ $2\ell+1$ and $k+\ell+1,$ we are going to consider the different cases.
    \begin{enumerate} [(1)]
        \item Assume that $k<0$, $l\ge 0$ and $k+\ell+1\le 0.$ From \eqref{e:Vklm} we have that  $V_{k,\ell}=
        \max\left(-2k-1,2\ell+1,-2k-2\ell-3\right),$ and from the inequalities
        $(2\ell+1)-(-2k-1)\le 0$  and  $(-2k-2\ell-3)-(-2k-1)<0$
        we get that $V_{k,\ell}=-2k-1=c.$ Hence $c$ is odd, $k=-(1+c)/{2}$, $0\le \ell\le -k-1={(c-1)}/{2},$ and the
        points $p_{k,l}$ can be written as
$
p_{k,l}=\left(-c+\ell+{1}/{2},\sqrt{3}\left(\ell+{1}/{2}\right)\right)$
with  $0\le \ell \le {(c-1)}/{2}.$ Hence, every $p_{k,l}$ lies on
        the straight line $y=\sqrt{3}(x+c)$ and that there are ${(c+1)}/{2}$ 
        such points.
        \item   Assume that $k<0$, $l\ge 0$ and $k+\ell+1> 0.$ Then, proceeding analogously to the previous case we get
        $V_{k,\ell}=2\ell+1=c.$ Hence $c$ is odd, $\ell={(c-1)}/{2}$ and ${-(c+1)}/{2}<k<0.$ We get the points
        $
        p_{k,l}=\left(2k+{(c+2)}/{2},{\sqrt{3}c}/{2}\right)$ with  ${(1-c)}/{2}\le k \le -1.
        $
        These points lie on the straight line $y={\sqrt{3}c}/{2}$ and there are ${(c-1)}/{2}$ such points.

        \item Assume that $k\ge 0$ and $\ell\ge 0.$ Then $V_{k,\ell}=2k+2\ell+1=c>0.$ Hence $c$ is odd,
        $k={(c-1-2\ell)}/{2}$ and $0\le \ell\le {(c-1)}/{2}.$ We get the points
        $
    p_{k,l}=\left(c-\ell+{1}/{2},\sqrt{3}\left(\ell+{1}/{2}\right)\right)$ with  $0\le \ell \le {(c-1)}/{2}.
    $
    These points lie on $y=\sqrt{3}(-x+1+c)$ and there are ${(c+1)}/{2}$ such points.

    \item Assume that $\ell< 0,$  $k\ge 0$ and $k+\ell +1\ge 0.$ Then  $V_{k,\ell}=2k+1=c.$ Hence $c$ is odd, $k={(c-1)}/{2}$
    and ${-(c+1)}/{2}\le \ell<0.$ We obtain the points
    $
    p_{k,l}=\left(c+\ell+{1}/{2},\sqrt{3}\left(\ell+{1}/{2}\right)\right)$  with  ${-(c+1)}/2\le \ell\le -1,
    $
     which lie on $y=\sqrt{3}(x-c)$ and there are ${(c+1)}/{2}$ such  points.

     \item Assume that $\ell< 0,$  $k\ge 0$ and $k+\ell +1< 0.$ Then  $V_{k,\ell}=-2\ell-1=c.$ Hence $c$ is odd, $\ell={-(c+1)}/{2}$
     and $0\le k<{(c-1)}/{2}.$ We get the points
     $
     p_{k,l}=\left(2k+{(2-c)}/{2},-{\sqrt{3}c}/{2}\right)$  with  $0\le k\le {(c-3)}/{2},
     $
     which lie on $y=-{\sqrt{3}c}/{2}$ and there are ${(c-1)}/{2}$ such  points.
     \item Assume $k<0$ and $\ell<0.$ Then $k+\ell+1<0$ and $V_{k,\ell}=-2k-2\ell-3=c.$ Hence $c$ is odd, $k=-\ell-{(c+3)}/{2}$
     with $-{(c+3)}/{2}<\ell<0.$ We get the points
     $
     p_{k,l}=\left(-c-\ell-{3}/{2},\sqrt{3}\left(\ell + {1}/{2}\right)\right)$ with  $-{(c+1)}/{2}\le \ell\le -1,
     $
which lie on $y=-\sqrt{3}(x+c+1)$ and there are ${(c+1)}/{2}$ 
such points.
\end{enumerate}
The Lemma follows from the above case-by-case study.
\end{proof}

Consider an \emph{odd} energy level $V_{k,\ell}=c$; we will
label the center points $p_{k,\ell}$  analogously as in the case
$\alpha={\pi}/{2}$: we denote by $X_1$ the point on the
corresponding irregular  hexagon defined by the lines in the above
lemma, which belongs to $H_+$ and its first component is the
smallest one; that is, $X_1=(-c+{1}/{2},{\sqrt{3}}/{2}).$ After we
denote by $X_2,X_3,\ldots X_{3c+1}$ the consecutive points on the
hexagon turning clockwise (see Figure \ref{f:figurahexagonc3} for
instance). The set of center points in such  a level set is
invariant under the action of $F$ and its dynamics is given in the
next result:

\begin{propo} \label{p:zmodul2} Assume $m=k+\ell+1$. Fixed
 $V_{k,\ell}=c$ an odd number, consider the points $X_1,X_2,\ldots, X_{3c+1}$ introduced above. Then
\begin{enumerate}[(a)]
    \item For all $i=1,2,\ldots ,3c+1$ , $F(X_i)=X_j$ with $j\equiv i+c \text { mod }(3c+1).$
    \item The set $\{X_1,X_2,\ldots, X_{3c+1}\}$ is a periodic orbit of period $3c+1.$ \end{enumerate}
\end{propo}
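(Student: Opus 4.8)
The plan is to establish (a) by a direct coordinate computation that parallels Lemma \ref{l:zetamodul}, and then to read off (b) as an immediate arithmetic corollary, exactly as Proposition \ref{p:periodesdelscentres} was deduced in the case $\alpha=\pi/2$.

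First I would use the explicit description of the $3c+1$ centers furnished by Lemma \ref{l:hexagons}, which groups them along the six segments of the irregular hexagon: the three segments $y=\sqrt{3}(x+c)$, $y=\sqrt{3}c/2$ and $y=\sqrt{3}(-x+1+c)$ carry the centers with $y>0$, while the three segments $y=\sqrt{3}(x-c)$, $y=-\sqrt{3}c/2$ and $y=-\sqrt{3}(x+c+1)$ carry those with $y<0$. Since each center satisfies $|y|\geq\sqrt{3}/2$, none of them lies on the discontinuity line $LC_0$, so on the centers $F$ coincides unambiguously with $F_+$ on the three upper segments and with $F_-$ on the three lower ones. This reduces the statement to verifying that the single affine rotation $F_+$ permutes the upper centers and $F_-$ permutes the lower centers, always within the level $c$.

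The core step is to feed the generic center of each of the six families of Lemma \ref{l:hexagons} into the appropriate $F_\pm$ and to recognise the output as a center of another family, recording its clockwise index. As a representative check I would compute $F_+(X_1)=F_+\!\left(-c+\tfrac12,\tfrac{\sqrt3}{2}\right)=\left(\tfrac{c}{2}+1,\tfrac{\sqrt3\,c}{2}\right)$, which is exactly the first center of the segment $y=\sqrt{3}(-x+1+c)$ encountered when turning clockwise, namely $X_{c+1}$, in agreement with the claimed shift $j\equiv i+c\pmod{3c+1}$. Performing the analogous computation on a generic point of each segment, while tracking the clockwise labelling whose consecutive blocks have sizes $\tfrac{c+1}{2},\tfrac{c-1}{2},\tfrac{c+1}{2},\tfrac{c+1}{2},\tfrac{c-1}{2},\tfrac{c+1}{2}$ (summing to $3c+1$), yields $F(X_i)=X_j$ with $j\equiv i+c\pmod{3c+1}$ throughout. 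The main obstacle is precisely this bookkeeping: because the six blocks have unequal sizes and because $F_+$ and $F_-$ are rotations about different centers (so there is no global rotational symmetry, $3c+1$ being moreover indivisible by $3$), one must check segment by segment that the index advances by the same constant $c$.

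Finally, part (b) is a formal consequence of (a). By (a) the restriction of $F$ to $\{X_1,\dots,X_{3c+1}\}$ is conjugate to the translation $h\colon\Z_{3c+1}\to\Z_{3c+1}$, $h(i)=i+c$, so the common period of the centers equals the additive order of $c$ in $\Z_{3c+1}$, that is $(3c+1)/\gcd(c,3c+1)$. Since $3c+1\equiv1\pmod{c}$ gives $\gcd(c,3c+1)=1$, this order is $3c+1$; hence the $3c+1$ centers constitute a single periodic orbit of period $3c+1$, as claimed.
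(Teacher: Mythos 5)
Your proposal is correct and follows essentially the same route as the paper: part (a) by a side-by-side check of the six families of centers from Lemma \ref{l:hexagons}, applying $F_+$ to the three upper segments and $F_-$ to the three lower ones and tracking the clockwise index shift by $c$ (your representative computation $F_+(X_1)=X_{c+1}$ is exactly the paper's), and part (b) via conjugation with $h(i)=i+c$ on $\Z_{3c+1}$ and $\gcd(c,3c+1)=1$. The only cosmetic difference is that the paper shortens the bookkeeping you flag as the main obstacle by noting that $F_\pm$ are isometries and consecutive centers on a side are equally spaced, so it suffices to compute one endpoint per side and identify the image line.
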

\begin{proof} To prove statement $(a)$ we are going to consider the points that are on each of the six sides of the irregular hexagon
delimited by the straight lines in Lemma \ref{l:hexagons}.
    \begin{itemize}
    \item Consider the points $X_1,X_2,\ldots ,X_{{(c+1)}/{2}}$ which lie on $y=\sqrt{3}(x+c).$ The map $F_+$ sends this
    straight line to $y=\sqrt{3}(-x+1+c)$ and
        $$F(X_1)=F_+\left(-c+{1}/{2},{\sqrt{3}}/{2}\right)=\left({(c+2)}/{2},{\sqrt{3}c}/{2}\right)=X_{c+1}.$$
        Since the distance between two consecutive points is constant and $F_+$ is an isometry we get that $X_2,X_3,\ldots
        ,X_{{(c+1)}/{2}}$  are mapped to $X_{c+2},X_{c+3},\ldots ,X_{{(3c+1)}/{2}}$ respectively. In particular
        \begin{equation}\label{e:auxiliar}
        F(X_i)=X_{i+c}
        \end{equation}
for all $i=1,2,\ldots, X_{{(c+1)}/{2}}.$
        \item Now consider $X_{{(c+1)}/{2}},X_{{(c+3)}/{2}},\ldots, X_{c+1}$ which lie on $y={\sqrt{3}c}/{2}.$ $F_+$ sends
        $y={\sqrt{3}c}/{2}$ to the straight line $y=\sqrt{3}(x-c)$ and we already check that $X_{{(c+1)}/{2}}$ is
        mapped to $X_{{(3c+1)}/{2}}.$ Being $F_+$ an isometry we get that $X_{{(c+3)}/{2}},\ldots ,X_{c+1}$ are
        mapped to $X_{{(3c+3)}/{2}},\ldots ,X_{2c+1}$ respectively. Finally, equation \eqref{e:auxiliar} also holds for
        every $i={(c+1)}/{2},\ldots ,c+1.$
        \item Following the same argument it is seen that $X_{c+1},X_{c+2},\ldots,X_{{(3c+1)}/{2}}$ are mapped
        to $X_{2c+1},X_{2c+2},\ldots,X_{{(5c+1)}/{2}}$ respectively.
        \item The points $X_{{(3c+3)}/{2}},\ldots,X_{2c+1}$ lie on $y=\sqrt{3}(x-c)$ and on $H_-.$ Then we have
        to take into account $F_-$ which maps $y=\sqrt{3}(x-c)$ to $y=-\sqrt{3}(x+c+1)$ and
        $$F(X_{{(3c+3)}/{2}})=F_-\left(c-{1}/{2},-{\sqrt{3}}{2}\right)=\left({-(c+2)}/{2},-{\sqrt{3}c}/{2}\right)=
        X_{{(5c+3)}/{2}}.$$ Hence $F_-$ sends $X_{{(3c+5)}/{2}},\ldots,X_{2c+1}$ to $X_{{(5c+5)}/{2}},\ldots,X_{3c+1}$
        respectively and \eqref{e:auxiliar} holds for every $i={(3c+3)}/{2},{(3c+5)}/{2},\ldots, 2c+1.$

        \item Now consider the points of the irregular hexagon which lie on the straight line $y=-{\sqrt{3}c}/{2},$ that is, $X_{2c+1},\ldots,X_{{(5c+3)}/{2}}.$
        The map $F_-$ sends  $y=-{\sqrt{3}c}/{2}$ to $y=\sqrt{3}(x+c)$ and we verify that $X_{2c+1}$ is sent to $X_{3c+1}.$
        Then $X_{2c+2},\ldots,X_{\frac{5c+3}{2}}$ are sent to $X_1,\ldots,X_{{(c+1)}/{2}}.$ So $F(X_i)=X_{j}$ with $j\equiv i+c \text { mod }(3c+1)$
        for $i=2c+1,\ldots,{(5c+3)}/{2}.$
        \item Finally, the points $X_{{(5c+5)}/{2}},\ldots,X_{3c+1}$ are on $H_-$ and also lie on $y=-\sqrt{3}(x+c+1)$.
         The map $F_-$ sends this straight line to $y={\sqrt{3}c}/{2}$ and since we already know that that $F_-(X_{{(5c+5)}/{2}})=
         X_{{(c+1)}/{2}}$ we get that $X_{{(5c+5)}/{2}},\ldots,X_{3c+1}$ are sent to $X_{{(c+3)}/{2}},\ldots,X_c$ respectively.
         Again $F(X_i)=X_{j}$ with $j\equiv i+c \text { mod }(3c+1)$ for every $i={(5c+5)}/{2},\ldots,X_{3c+1}$.
        \end{itemize}
    In order to prove $(b)$ we proceed as in the proof of Proposition \ref{p:periodesdelscentres}. We use that
    the map $F$ restricted to $\{X_1,X_2,\ldots, X_{3c+1}\}$ is conjugated to
$h:\Z_{3c+1}\longrightarrow \Z_{3c+1}$ defined by $h(i)=i+c.$
    Then
    $$F^p(X_i)=X_i \Leftrightarrow h^p(i)=i \Leftrightarrow i+cp\equiv i \,\, \text{mod}\,\,(3c+1)\Leftrightarrow \exists n\in\N:cp=n(3c+1).$$
    This implies that $p$ must be a multiple of $3c+1,$ and since $p\le 3c+1$ we get that the minimal period is $p=3c+1$ as we wanted to see.
    \end{proof}

\subsubsection{Dynamics on the triangular tiles. The cases $m=k+\ell$ and $m=k+\ell+2$}
For the triangular tiles, a result analogous to Lemma
\ref{l:hexagons} is the following. We omit all the details.
\begin{lem}\label{l:triangles}
\begin{enumerate}[(i)]
        \item Let $m=k+\ell$ and  the points $q_{k,\ell}=\left(2k+\ell+{1}/{2},\sqrt{3}(\ell+{1}/{6})\right).$ Then
        \begin{enumerate}[(a)]
            \item $V(q_{k,\ell})=c$ is an even number.
            \item The set $\{q_{k,\ell}:V(q_{k,\ell})=c\}$ has $3c+1$ elements. In particular there are $3c+1$  triangles $T_{k,\ell,k+\ell}$
            in this energy level.
            \item The points $q_{k,\ell}:V(q_{k,\ell})=c$ lie in the irregular hexagon determined by the intersection of the six straight lines
            $y=\sqrt{3}(x+c-{1}/{3}),y={\sqrt{3}}(3c+1)/6,y=-\sqrt{3}(x-c-{2}/{3}),y=\sqrt{3}(x-c-{1}/{3}),y=
            -{\sqrt{3}}(3c-1)/6$ and $y=-\sqrt{3}(x+c+{4}/{3}).$
        \end{enumerate}
    \item Let $m=k+\ell+2$ and the points $r_{k,\ell}=\left(2k+\ell+{5}/{2},\sqrt{3}(\ell+{5}/{6})\right).$ Then
    \begin{enumerate}[(a)]
        \item $V(r_{k,\ell})=c$ is an even number.
        \item The set $\{r_{k,\ell}:V(r_{k,\ell})=c\}$ has $3c+1$ elements. In particular there are $3c+1$  triangles $T_{k,\ell,k+\ell}$
        in this energy level.
        \item The points $r_{k,\ell}:V(r_{k,\ell})=c$ lie in the irregular hexagon determined by the intersection of the six straight lines
        $y=\sqrt{3}(x-c+{1}/{3}),y={\sqrt{3}}(3c-1)/6,y=-\sqrt{3}(x+c+{4}/{3}),y=\sqrt{3}(x-c+{1}/{3}),
        y=-{\sqrt{3}}(3c+1)/6$ and $y=-\sqrt{3}(x+c-{4}/{3}).$
    \end{enumerate}
\end{enumerate}
    \end{lem}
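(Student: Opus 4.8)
The plan is to mirror the case-by-case computation already carried out for the hexagonal centers in Lemma \ref{l:hexagons}. The starting point is formula \eqref{e:Vklm}, namely $V_{k,\ell,m}=\max(|k-\ell+m|,|k+\ell+m+1|-1,|-k+\ell+m|)$. Substituting the two admissible triangular values of $m$ reduces this to two explicit expressions in $k$ and $\ell$ alone: for $m=k+\ell$ one obtains $V_{k,\ell,k+\ell}=\max(2|k|,|2(k+\ell)+1|-1,2|\ell|)$, while for $m=k+\ell+2$ one obtains $V_{k,\ell,k+\ell+2}=\max(2|k+1|,|2(k+\ell)+3|-1,2|\ell+1|)$. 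From here every item of the statement follows by examining which of the three arguments realizes the maximum.

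Part (a) is then immediate. In both reduced expressions the two ``pure'' arguments ($2|k|$ and $2|\ell|$, or $2|k+1|$ and $2|\ell+1|$) are manifestly even, while the ``mixed'' argument is the absolute value of an odd integer decreased by one, hence also even; the maximum of three even integers is even, so $V$ takes only even values on every triangular tile.

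Before grinding the remaining two items twice, I would exploit a symmetry that collapses case (ii) into case (i). A direct substitution shows that replacing $(k,\ell)$ by $(-1-k,-1-\ell)$ turns the $m=k+\ell+2$ expression into the $m=k+\ell$ expression, and at the level of centers one checks that $q_{-1-k,-1-\ell}=-r_{k,\ell}$. Thus the central symmetry $(x,y)\mapsto(-x,-y)$, which preserves $V$, sends the $r$-centers of level $c$ bijectively onto the $q$-centers of level $c$ and carries the irregular hexagon of item (ii)(c) onto that of item (i)(c). Consequently it suffices to establish (b) and (c) for the $q$-triangles, i.e. for $m=k+\ell$, and the statements for the $r$-triangles follow for free.

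For the $q$-case I would split the $(k,\ell)$ lattice into the same six sign regions (of $k$, $\ell$ and $2(k+\ell)+1$) as in the six enumerated cases of the proof of Lemma \ref{l:hexagons}. In each region a single argument attains the maximum, and the condition ``that argument equals $c$'' pins the admissible centers onto one of the six lines $y=\sqrt{3}(x+c-1/3)$, $y=\sqrt{3}(3c+1)/6$, $\ldots$, $y=-\sqrt{3}(x+c+4/3)$ listed in the statement; counting the integer values of the free index along each segment and summing over the six edges yields the total $3c+1$. The step I expect to be the only genuine obstacle is precisely this bookkeeping of the counts: since here $c$ is \emph{even} (rather than odd as in Lemma \ref{l:hexagons}) the centers distribute differently along the six edges and, just as there, the six corner centers must each be attributed to exactly one edge to avoid double counting. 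Verifying that the six partial counts still sum to $3c+1$, together with checking the degenerate level $c=0$ (where one recovers the single triangle $T_{0,0,0}$), is where care is needed, though it is entirely routine and is the reason the authors omit the details.
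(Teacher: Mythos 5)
Your proposal is correct and follows the route the paper intends: the paper omits the proof of this lemma entirely, saying only that it is ``analogous to Lemma \ref{l:hexagons}'', and your six-region case analysis of $V_{k,\ell,k+\ell}=\max\bigl(2|k|,|2(k+\ell)+1|-1,2|\ell|\bigr)$ is exactly that analogous computation (I checked the count, e.g.\ $13$ centers for $c=4$, and the parity claim, both of which hold). Your central-symmetry reduction $(k,\ell)\mapsto(-1-k,-1-\ell)$, $q_{-1-k,-1-\ell}=-r_{k,\ell}$, is a genuine labor-saving addition not present in the paper, and it also exposes that the list of six lines in item (ii)(c) of the statement contains a repeated line and hence a typo.
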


\begin{figure}[H]
\centerline{\includegraphics[scale=0.37]{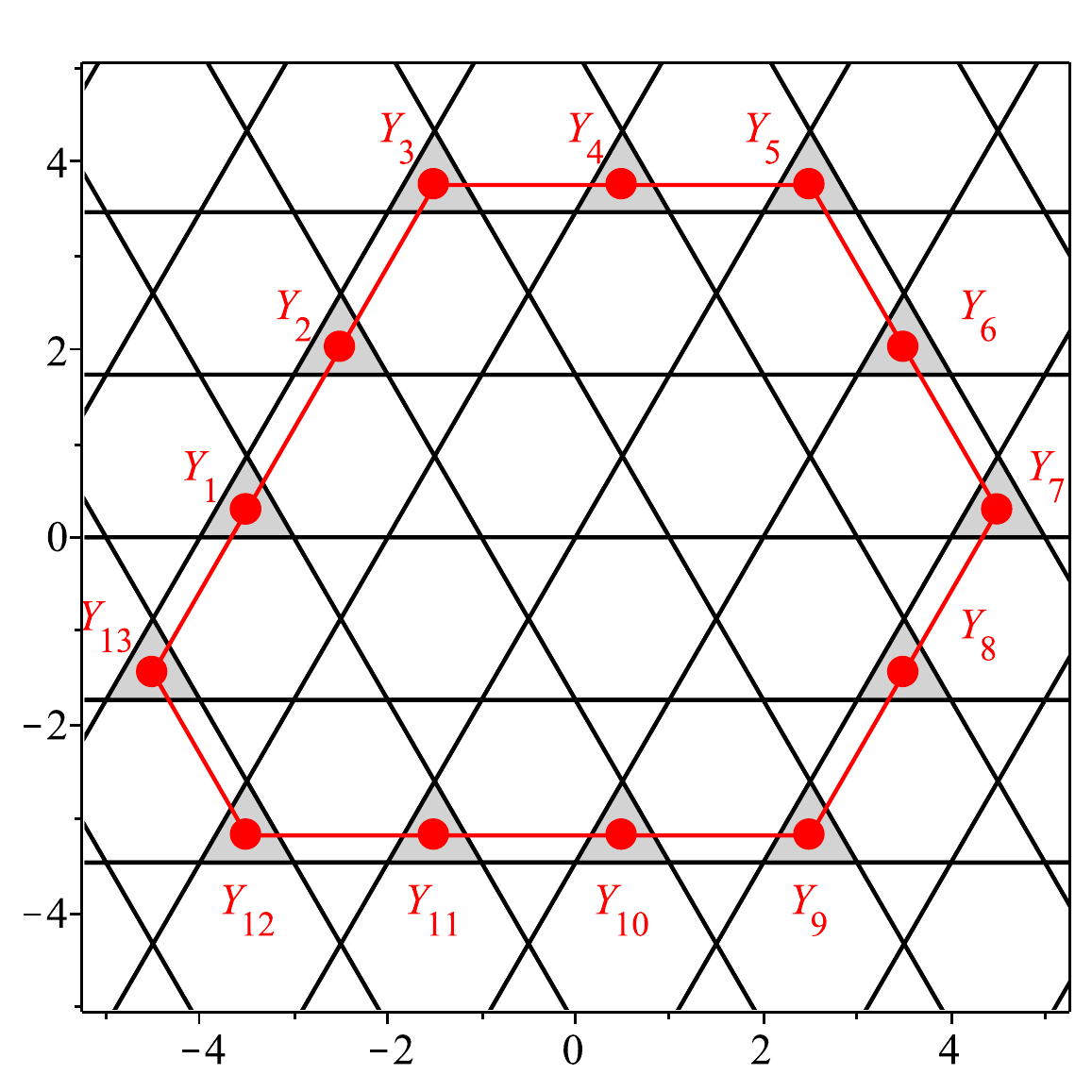}\,
\quad \includegraphics[scale=0.37]{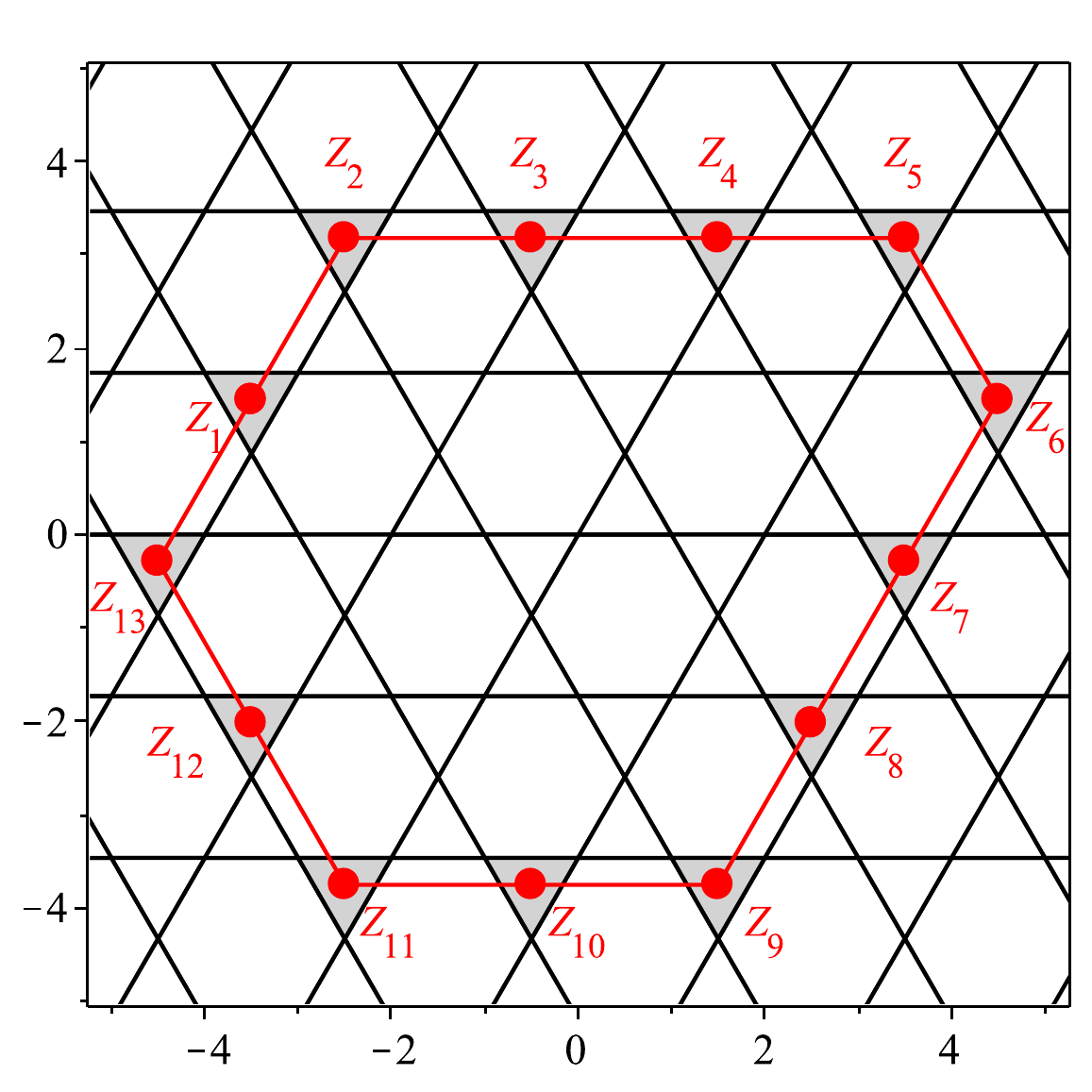}}
    \caption{Position of the centers $q_{k,\ell}$ and $r_{k,\ell}$ in the level  $c=4.$}\label{f:hexc4}
\end{figure}

For a fixed even number $c\ge 2$ consider
$\{q_{k,\ell}:V(q_{k,\ell})=c\}$ (resp.
$\{r_{k,\ell}:V(r_{k,\ell})=c\}$). We denote by $Y_1$ (resp. $Z_1$)
the point $q_{k,\ell}$ (resp. $r_{k,\ell}$) in the corresponding
irregular hexagon defined by the lines in the above  lemma, which
belongs to $H_+$ and its first component is the smallest one, that
is $Y_1=\left(-c+{1}/{2},{\sqrt{3}}/{6}\right)$ (resp.
$Z_1=\left(-c+{1}/{2},{5\sqrt{3}}/{6}\right)$). We denote by
$Y_2,Y_3,\ldots,Y_{3c+1}$ (resp. $Z_2,Z_3,\ldots,Z_{3c+1}$) the
consecutive points on the corresponding hexagon turning clockwise.
See Figure~\ref{f:hexc4} where the center points of the energy level $c=4$ are shown.

\begin{propo}\label{p:centretriangles} Consider a fixed even energy level $V_{k,\ell}=c$ and  the
points $Y_1,Y_2,\ldots ,Y_{3c+1}$ and $Z_1,Z_2,\ldots ,Z_{3c+1}$
defined above. Then
    \begin{enumerate}[(a)]
    \item  For all $i=1,2,\ldots ,3c+1,$ $F(Y_i)=Y_j$ with $j\equiv i+c \text{ mod }(3c+1)$ and $F(Z_i)=Z_j$
    with $j\equiv i+c\text{ mod } (3c+1).$
    \item The set $\{Y_1,Y_2,\ldots ,Y_{3c+1}\}$ is a periodic orbit of period $3c+1$ and the set $Z_1,Z_2,\ldots ,Z_{3c+1}$
    also is a periodic orbit of period $3c+1.$
        \end{enumerate}
\end{propo}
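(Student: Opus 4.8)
The plan is to follow verbatim the strategy already used for the hexagonal centers in Proposition \ref{p:zmodul2}, treating the two triangular families $\{Y_i\}$ and $\{Z_i\}$ in parallel. The structural facts that make this work are that $F_+$ and $F_-$ are isometries (so they preserve distances between consecutive centers), that each of the six sides of the irregular hexagon described in Lemma \ref{l:triangles} lies entirely in $H_+$ or entirely in $H_-$, and that along each of these sides the relevant centers are equally spaced. Thus the whole argument reduces to anchoring the map on one point of each side and propagating by the isometry property.

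For part $(a)$ I would first fix the $Y$ family and verify the base case directly: starting from $Y_1=\left(-c+{1}/{2},{\sqrt{3}}/{6}\right)\in H_+$, compute $F(Y_1)=F_+(Y_1)$ and check that it equals $Y_{1+c}$, i.e. that $F_+$ carries the first side of the hexagon of Lemma \ref{l:triangles}(i) onto the side containing $Y_{1+c}$. Because $F_\pm$ is an isometry and consecutive $Y_i$ are equidistant, every other point on that same side then maps to the consecutive points, giving $F(Y_i)=Y_{i+c}$ for all $i$ on that side. Proceeding clockwise side by side around the hexagon, switching from $F_+$ to $F_-$ precisely when a side passes into $H_-$, and reading the indices that overshoot $3c+1$ modulo $3c+1$, establishes $F(Y_i)=Y_j$ with $j\equiv i+c \pmod{3c+1}$ for every $i$. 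The identical computation anchored at $Z_1=\left(-c+{1}/{2},{5\sqrt{3}}/{6}\right)$, using the six lines of Lemma \ref{l:triangles}(ii), yields the same congruence for the $Z_i$.

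For part $(b)$, once the congruence is in hand the restriction $\left.F\right|_{\{Y_1,\ldots,Y_{3c+1}\}}$ is conjugate to $h:\Z_{3c+1}\to\Z_{3c+1}$, $h(i)=i+c$, exactly as in Proposition \ref{p:periodesdelscentres}. Hence
$$F^p(Y_i)=Y_i \iff cp\equiv 0 \pmod{3c+1}.$$
Since $\gcd(c,3c+1)=\gcd(c,1)=1$, the class of $c$ is invertible modulo $3c+1$, so this forces $(3c+1)\mid p$; together with $p\le 3c+1$ this gives minimal period exactly $3c+1$. The same conclusion holds for the orbit $\{Z_1,\ldots,Z_{3c+1}\}$.

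The main obstacle is the bookkeeping in part $(a)$: one must carefully track, side by side around the irregular hexagon, which half-plane each segment of centers occupies (hence whether $F_+$ or $F_-$ applies) and confirm that the index wraparound is consistent with the single congruence $j\equiv i+c$. Everything else is routine isometry computation, and the coprimality $\gcd(c,3c+1)=1$ driving part $(b)$ is immediate, so no extra arithmetic subtlety arises there.
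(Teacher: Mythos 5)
Your proposal follows essentially the same route as the paper, which proves this proposition by simply invoking the arguments of Lemma \ref{l:zetamodul}, Proposition \ref{p:periodesdelscentres} and (implicitly) Proposition \ref{p:zmodul2}: anchor $F$ at one center per segment, propagate along equally spaced centers using that $F_\pm$ are isometries, and then obtain the period from the conjugacy with $h(i)=i+c$ on $\Z_{3c+1}$ together with $\gcd(c,3c+1)=1$. The only imprecision is your preliminary claim that each of the six sides of the irregular hexagon of Lemma \ref{l:triangles} lies entirely in $H_+$ or entirely in $H_-$ --- as in the proof of Proposition \ref{p:zmodul2}, the slanted sides cross $y=0$ and must be split into their $H_+$ and $H_-$ portions with a separate anchor for each, which your later remark about switching from $F_+$ to $F_-$ when a side passes into $H_-$ already accommodates in practice.
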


The proof follows exactly by the same arguments involved in the
proofs of  Lemma \ref{l:zetamodul} and
Proposition~\ref{p:periodesdelscentres}. The next corollary simply
consists of gluing (in a suitable way) the two sets given in the
previous proposition, to form a single necklace with $6c+2$
triangular beads.

\begin{corol}\label{co:nou} Consider a fixed even energy level $V_{k,\ell}=c$ and
denote the set of $6c+2$ ordered points $Y_1,Z_1,Y_2,Z_2\ldots
,Y_{3c+1},Z_{3c+1},$ that we will denote $W_{i},i=1,2,\ldots,6c+2.$
Then  for all $i=1,2,\ldots ,6c+2,$ $F(W_i)=W_j$ with $j\equiv i+2c
\text{ mod }(6c+2).$
\end{corol}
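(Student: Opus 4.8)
The plan is to deduce the congruence purely by transporting the two separate dynamics of Proposition~\ref{p:centretriangles} through the interleaving relabeling, so that essentially no new geometric input is needed. First I would record the relabeling explicitly: by the definition of the ordered list $W_1,W_2,\ldots,W_{6c+2}=Y_1,Z_1,Y_2,Z_2,\ldots,Y_{3c+1},Z_{3c+1}$ we have $W_{2i-1}=Y_i$ and $W_{2i}=Z_i$ for every $i=1,2,\ldots,3c+1$. Thus the odd indices correspond to the $Y$-centers and the even indices to the $Z$-centers, matching the fact that $F$ sends $Y$-centers to $Y$-centers and $Z$-centers to $Z$-centers.

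The one elementary observation I would isolate is that a congruence modulo $3c+1$ doubles to a congruence modulo $6c+2$: if $a\equiv b \text{ mod }(3c+1)$, then multiplying by $2$ gives $2a\equiv 2b \text{ mod }(6c+2)$. Together with the fact that the shift $2c$ is \emph{even} (so it preserves the parity of an index, in agreement with $F$ preserving the $Y/Z$ type), this is precisely what turns the two separate mod-$(3c+1)$ rotations into a single mod-$(6c+2)$ rotation on the merged necklace.

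Then I would simply treat the two parities. For an odd index $2i-1$, Proposition~\ref{p:centretriangles}(a) gives $F(W_{2i-1})=F(Y_i)=Y_{i'}$ with $i'\equiv i+c \text{ mod }(3c+1)$; doubling yields $2i'-1\equiv 2i+2c-1=(2i-1)+2c \text{ mod }(6c+2)$, and since $Y_{i'}=W_{2i'-1}$ this is exactly $F(W_{2i-1})=W_j$ with $j\equiv (2i-1)+2c$. For an even index $2i$, the same proposition gives $F(W_{2i})=F(Z_i)=Z_{i'}$ with $i'\equiv i+c\text{ mod }(3c+1)$, hence $2i'\equiv 2i+2c=(2i)+2c\text{ mod }(6c+2)$ and $Z_{i'}=W_{2i'}$, giving $F(W_{2i})=W_j$ with $j\equiv (2i)+2c$. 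Since every index in $\{1,\ldots,6c+2\}$ has one of these two forms, the claimed relation $j\equiv i+2c\text{ mod }(6c+2)$ holds for all $i$. There is no genuine obstacle here: the statement is a bookkeeping consequence of Proposition~\ref{p:centretriangles}, and the only point requiring care is the interaction between the mod-$(3c+1)$ reduction inside each orbit and the mod-$(6c+2)$ reduction in the combined necklace, which is exactly what the doubling observation and the even shift handle.
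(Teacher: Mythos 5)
Your proof is correct and follows the same route the paper intends: the paper states that the corollary ``simply consists of gluing'' the two orbits of Proposition~\ref{p:centretriangles}, and your argument is exactly that gluing written out, via the relabeling $W_{2i-1}=Y_i$, $W_{2i}=Z_i$ and the observation that a congruence mod $3c+1$ doubles to one mod $6c+2$ while the even shift $2c$ preserves the $Y/Z$ parity. Nothing is missing.
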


\begin{proof}[Proof of item $(i)$ of Theorem \ref{th:b}] Following the spirit of Definition \ref{d:itinerarymap}, we
can introduce the concept of itinerary map for the  centers
$p_{k,\ell}$, $r_{k,\ell}$ and $q_{k,\ell}$ in an analogous way.
Then, the proof is exactly the same proof as for item $(i)$
of Theorem \ref{th:a}. It is based on the fact that all the points
in the same tile have the same itineraries of arbitrary length (a
result analogous to Lemma \ref{l:itineraris}) and also on Lemma
\ref{convex}.
\end{proof}

\begin{proof}[Proof of item $(ii)$ of Theorem \ref{th:b}] We start proving that
$V=V_{3\pi/2}$ is a first integral.  As in the proof of item $(ii)$
of Theorem \ref{th:a},  we notice that since the tiles are
completely contained in $H_+\setminus\{y=0\}$ or $H_-$ and the maps
$F_{\pm}$ are rotations, then $F$ sends tiles to tiles. Remember that by
its definition $V$ is constant on each tile, and in particular takes
the value attained at the center point. The result follows now from
the fact that in each level set, the set of centers is invariant,
see Propositions \ref{p:zmodul2} and \ref{p:centretriangles}.

 Similarly that in the proof of Theorem \ref{th:a} we consider the tile $T_{k,\ell,m}$.
We know that all the points in the tile have the same itinerary than
its center which, by Propositions~\ref{p:zmodul2}, \ref{p:centretriangles} and Corollary
\ref{co:nou} give the discrete dynamical systems generated by the functions
$h$ given in the statement of Theorem \ref{th:b} between the
corresponding $\Z_M.$ Moreover, we know that the centers are
periodic with period $3c+1.$ Hence, if $I$ is the itinerary map
associated with the center point, that is
$I=\left.F^{3c+1}\right|_{T_{k,\ell,m}},$ we have that
$I(T_{k,\ell,m})=T_{k,\ell,m}$. Writing $F(x,y)=A \cdot
\left(x-\operatorname{sign}(y),y \right)^t$ where
$A=R_{{2\pi}/{3}}$, we have $I=A^{3c+1}+v=A+v$ for a certain
$v\in\R^2,$ $v\ne 0,$ which implies that $I$ is a rotation with a unique
fixed point, hence it is the center point.
Furthermore $I^3=\operatorname{Id}$ because
$I^3=A^3+(A^2+A+\operatorname{Id})v=\operatorname{Id},$  since
$A^2+A+\operatorname{Id}=0.$  In summary, $I$ is a rotation of angle
$\alpha={2\pi}/{3}$ which implies that the points $(x,y)\in
T_{k,\ell,m}$ which are not centers
 are 3-periodic for $F^{3c+1}$ and consequently, they are $(9c+3)$-periodic.~\end{proof}

\subsection{Proof of item $(iii)$  of Theorem \ref{th:b}: dynamics in the non zero-free set}

From the previous results, we know that the non zero-free set
$\mathcal{F}$ is  formed by the borders of the tiles, both hexagons
and triangles.

Consider an energy level $c\in\mathbb{N}_0$. Assume that $c$ is an
odd number, then the level  set $\{V=c\}$ is formed by $3c+1$
hexagonal tiles, whose centers $X_1,X_2,\ldots ,X_{3c+1}$ form a
periodic orbit. Denoting by $H_i$ the closure of this hexagon we
also know that $H_1$ and $H_{2c-1}$ intersect $y=0$ at the bottom
edge while $H_{2c}$ and $H_{3c+1}$ intersect $y=0$ at the top edge.
When $c$ is even, we have the points $Y_1,Y_2,\ldots ,Y_{3c+1}$
(respectively, $Z_1,Z_2,\ldots ,Z_{3c+1}$). Each $Y_i$ (resp. $Z_i$)
is the center of an upward (resp. downward) facing triangle; its
closure intersects $y=0$ only when $i=1$ and $i={(3c+2)}/{2}$ (resp.
$i={(3c+2)}/{2}$ and $i=3c+1$), see Figure \ref{f:nonzerofreed1d2}.

\begin{figure}[H]
    \centerline{\includegraphics[scale=0.4]{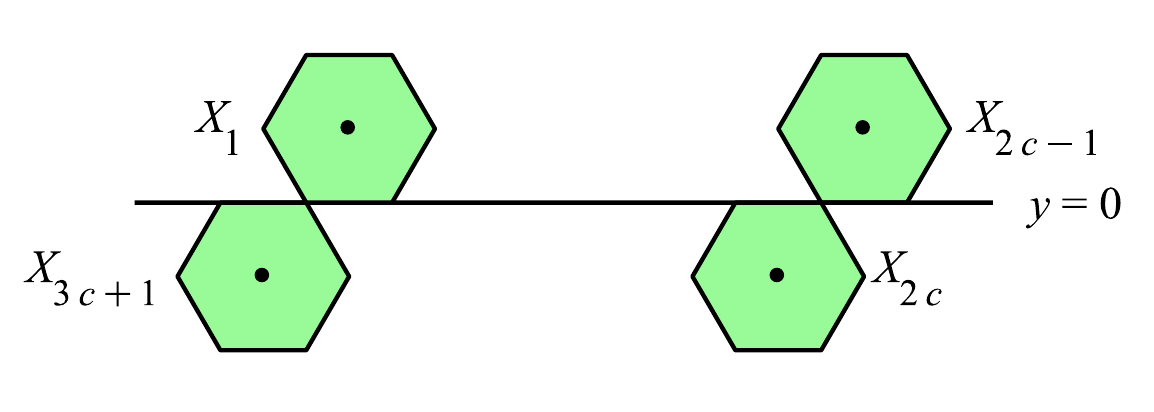}}
    \vspace{0.3cm}
    \centerline{\includegraphics[scale=0.4]{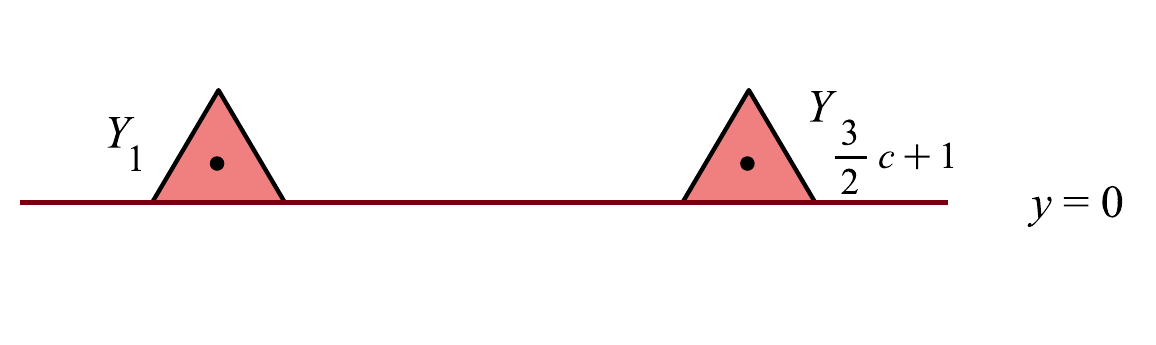} \includegraphics[scale=0.4]{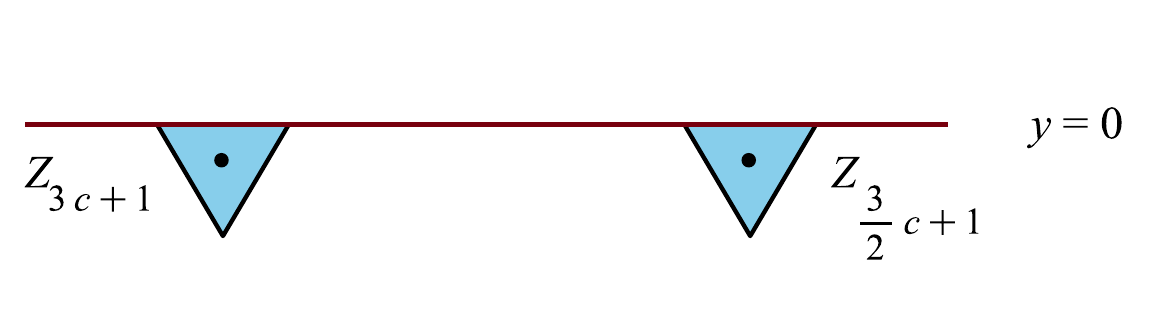}}
    \caption{Position of the tiles which intersect $y=0.$}\label{f:nonzerofreed1d2}
\end{figure}

We are going to call \emph{perfect triangles} the ones corresponding
to $Y_1,Y_2,\ldots ,Y_{3c+1}.$ As for perfect squares, we will prove
that these figures will evolve avoiding the discontinuity of $F.$
They are positioned as the Figure \ref{nonzerofree3} shows, 
the perfect triangles being the red ones, which are precisely the ones
pointing upwards. The blue ones correspond to $Z_1,Z_2,\ldots
,Z_{3c+1}.$
\begin{figure}[H]
    \centerline{\includegraphics[scale=0.4]{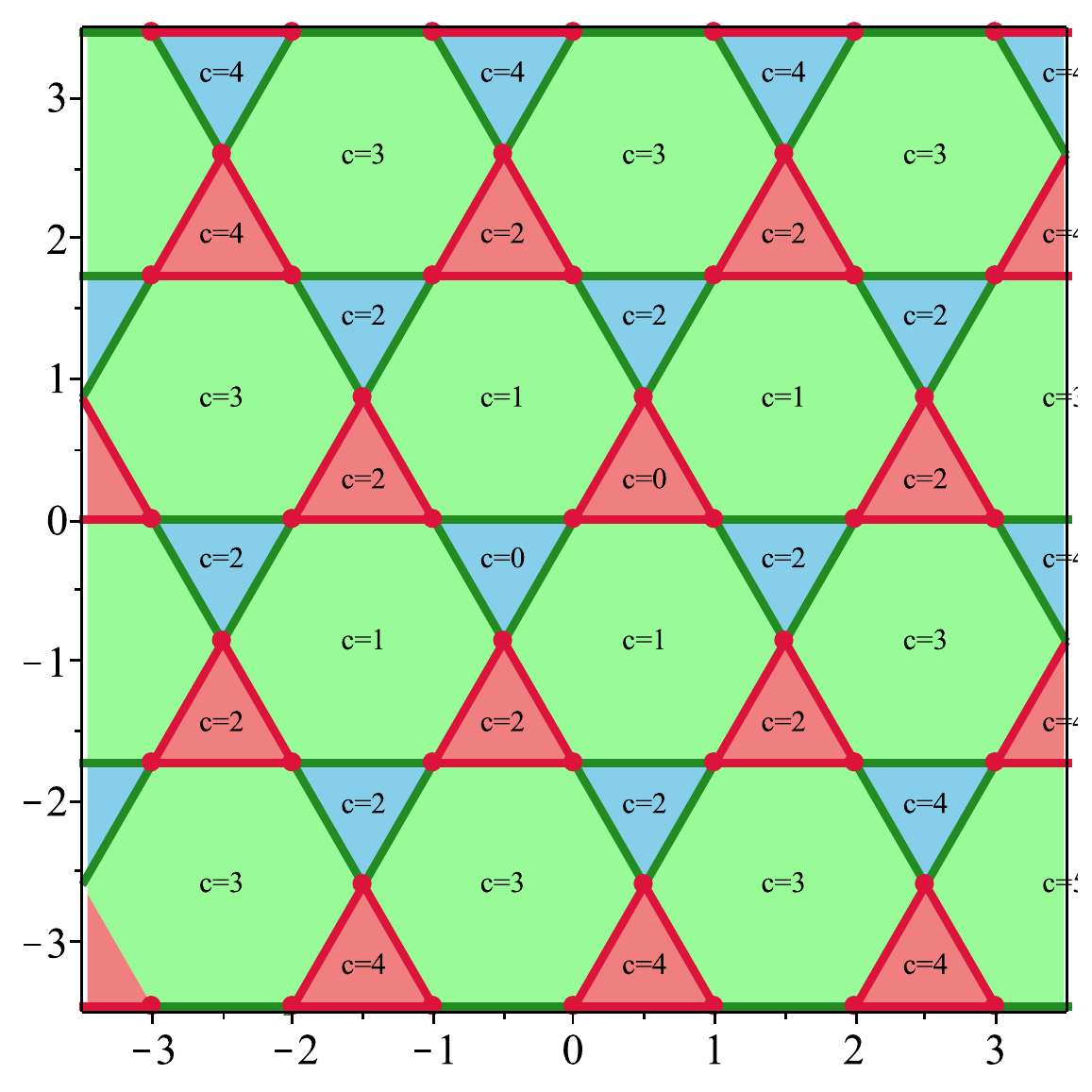}}
    \caption{Position of the perfect triangles in red. The borders and the vertices are highlighted with the
    color of the tile having the same itinerary map. The level set $\{V=c\}$ is indicated in each tile.}\label{nonzerofree3}
\end{figure}

\begin{proof}[Proof of item $(iii)$  of Theorem \ref{th:b}] First, we observe that the borders of the perfect triangles (including the vertices)
have the same period as the  interior points which are not centers.
Indeed, set an even number  $c\in \mathbb{N}_0$ and denote by $T_i$
the closed triangle (i.e. including the boundary with vertices)
which contain the point $Y_i$. For $i\ne 1,{3c}/{2}+1,$ the triangle
$T_i$ does not intersect $y=0,$ hence $F(T_i)=T_{i+c}.$ For $i=1$ or
$i={3c}/{2}+1,$ $F(T_i)=F_+(T_i)$ which also is $T_{i+c}.$
Therefore, by continuity, the points in the boundary of $T_i$ are
periodic with period $3(3c+1)=9c+3,$ as for the points in the interior
of $T_i.$

    Now take $c$ odd and let $H_i$ be the closed hexagon which contains $X_i$ in its interior.
    Looking at Figure \ref{nonzerofree3} we see that  $H_i$ has three edges which also are edges
    of a perfect triangle; if we call these edges the \emph{perfect edges}, we consider
    $\widetilde{H}_i =H_i\setminus \text{perfect edges}.$ (the motivation for this name is similar that the
    ones of perfect triangles, or squares, and will be apparent later).  Then, $\widetilde{H}_i $
    contains three alternate edges, say $l_1,l_2,l_3$, such
    that the slopes of the straight lines which contain them  are $\sqrt{3},-\sqrt{3}$ and $0$ respectively. Observe that $l_3$
is always at the bottom of the hexagon, hence $\widetilde{H}_i $ is
always fully contained in $H_+$ and $H_-$, and therefore
$F(\widetilde{H}_i )=F_+(\widetilde{H}_i )$ or $F(\widetilde{H}_i
)=F_-(\widetilde{H}_i ).$ In any case the three edges included in
$F(\widetilde{H}_{i})$ are three alternate edges with the edge of
slope $0$ in the bottom of the hexagon $F(\widetilde{H}_{i}).$ That
is, $F(\widetilde{H}_{i})=\widetilde{H}_{i+c}$ for all
$i=1,2,\ldots,3c+1.$ As in the previous case, by continuity, the
points in the boundary of $\widetilde{H}_i $ are periodic with
period $9c+3,$ as for the points in the interior of $H_i.$ Hence we have
proved that all the points in the edges of the hexagons are periodic
points.

    It remains to consider the edges of the triangles which are not perfect. But, as can be seen in Figure
    \ref{nonzerofree3}, all these edges are also the edges of the contiguous hexagons, which we
    have already proved that all of them are periodic. Observe also that all the vertices belong to perfect triangles.
\end{proof}

\subsection{Proof of item $(iv)$ of Theorem \ref{th:b}}

As in the case $\alpha=\pi/2,$ the proof follows by replacing the
value of $V$ by $2n$ or $2n+1,$ for $n\in\N_0,$ in  the results of
the previous items. We re-obtain the results of  \cite{ChaWanChe12}.

\section{Proof of Theorem \ref{th:c}}\label{s:alpha1pi3}

\subsection{Preliminaries}

As in the case studied in the previous section, for each tile
$T_{k,\ell,m},$ the values $k,\ell,m$ are not independent. Here,
either $m=k+\ell-1$ or $m=k+\ell$ or $m=k+\ell+1.$

\begin{lem}\label{l:centroid2}
    Let $T_{k,\ell,m}$ be one tile of  $\mathcal{U}_{\pi/3}=\mathcal{U}$.
        \begin{enumerate}[(a)]
                \item If $m=k+\ell$ then $T_{k,\ell,m}$
            is a hexagon whose center  is
            $p_{k,\ell}=\left(2k+\ell+{1}/{2},\sqrt{3}\ell+{\sqrt{3}}/{2}\right).$
            \item If $m=k+\ell-1$ or $m=k+\ell+1$, then $T_{k,\ell,m}$
            is a triangle whose center is either
            $q_{k,\ell}=\left(2k+\ell-{1}/{2},\sqrt{3}\ell+{\sqrt{3}}/{6}\right)$
            or
            $r_{k,\ell}=\left(2k+\ell+{3}/{2},\sqrt{3}\ell+{5\sqrt{3}}/{6}\right),$
            respectively.
\end{enumerate}
\end{lem}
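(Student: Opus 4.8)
The plan is to mirror the strategy used for $\alpha={2\pi}/{3}$ in Lemmas~\ref{l:klm} and~\ref{l:centroid1}, adapting it to the shifted floor functions $B$, $C$, $D$ of the present case. First I would pin down the dependence of $m$ on $k$ and $\ell$. Writing out the defining inequalities $k<(3x-\sqrt{3}y+3)/6<k+1$ and $\ell<\sqrt{3}y/3<\ell+1$, I would use the algebraic identity
$$\frac{3x+\sqrt{3}y}{6}=\frac{3x-\sqrt{3}y+3}{6}+\frac{\sqrt{3}y}{3}-\frac12,$$
which exhibits the argument of $D$ as the sum of the argument of $B$, the argument of $C$, and the constant $-1/2$. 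Substituting the two bounds gives $k+\ell-\tfrac12<(3x+\sqrt{3}y)/6<k+\ell+\tfrac32$, so that $m=\operatorname{E}((3x+\sqrt{3}y)/6)\in\{k+\ell-1,\,k+\ell,\,k+\ell+1\}$, exactly as claimed.

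Next I would turn each admissible value of $m$ into an explicit description of the region $T_{k,\ell,m}$. The constraints coming from $B$ and $C$ cut out a parallelogram with sides on the lines $y=\sqrt{3}(x-2k\mp1)$ and $y=\sqrt{3}\ell$, $y=\sqrt{3}(\ell+1)$, whose four vertices are elementary to compute. The constraint coming from $D$ contributes two lines of slope $-\sqrt{3}$, namely $y=-\sqrt{3}(x-2(k+\ell))$ (separating $m=k+\ell-1$ from $m=k+\ell$) and $y=-\sqrt{3}(x-2(k+\ell)-2)$ (separating $m=k+\ell$ from $m=k+\ell+1$). Exactly as in the analogous Figure~\ref{f:Tiles} for the previous case, these two lines slice two opposite corners off the parallelogram, leaving a central hexagon: the corner toward the bottom-left is the triangle $m=k+\ell-1$, the corner toward the top-right is the triangle $m=k+\ell+1$, and the central region is the hexagon $m=k+\ell$.

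Finally I would identify the three centers by direct computation. For the hexagon ($m=k+\ell$) the region is centrally symmetric, so its center coincides with the center of the surrounding parallelogram, that is, the average of the four parallelogram vertices, which evaluates to $p_{k,\ell}=\left(2k+\ell+{1}/{2},\sqrt{3}\ell+{\sqrt{3}}/{2}\right)$. For each triangular corner I would list its three vertices (two obtained by intersecting a slope-$-\sqrt{3}$ line with a side of the parallelogram, one being a vertex of the parallelogram itself) and take the centroid; the bottom-left triangle yields $q_{k,\ell}=\left(2k+\ell-{1}/{2},\sqrt{3}\ell+{\sqrt{3}}/{6}\right)$ and the top-right triangle yields $r_{k,\ell}=\left(2k+\ell+{3}/{2},\sqrt{3}\ell+{5\sqrt{3}}/{6}\right)$, matching the statement.

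The whole argument is essentially bookkeeping, and the only point that requires genuine care --- which I regard as the main (minor) obstacle --- is orienting the two cutting lines correctly so that each label $m$ is matched to the right corner of the parallelogram. The decisive check here is to track whether the region lies above or below each slope-$-\sqrt{3}$ line, using that $m=k+\ell-1$ corresponds to $3x+\sqrt{3}y<6(k+\ell)$ while $m=k+\ell+1$ corresponds to $3x+\sqrt{3}y>6(k+\ell+1)$; once the orientation is fixed the centroid computations are routine.
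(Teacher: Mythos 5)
Your proposal is correct and follows exactly the route the paper itself takes: the paper states this lemma without proof, deferring to the identical computation carried out for $\alpha=2\pi/3$ in Lemma~\ref{l:klm} and the discussion around Figure~\ref{f:Tiles}, and your adaptation of that computation (the floor-argument identity giving $m\in\{k+\ell-1,k+\ell,k+\ell+1\}$, the parallelogram cut by the two slope-$-\sqrt{3}$ lines, and the resulting centroids) is accurate. The computed centers $p_{k,\ell}$, $q_{k,\ell}$, $r_{k,\ell}$ all check out.
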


\subsection{Proof of items $(i)$ and $(ii)$ of Theorem \ref{th:c}: dynamics on the zero-free set}

These results can be proved by the same arguments that we have used
in the proofs of Theorems \ref{th:a} and \ref{th:b}, in Sections
\ref{s:alphapi2} and \ref{s:alpha2pi3}.  Although we will not give
all the details of their proofs,
 we want to highlight the main features and results that allow to give the dynamics  in this case.

Consider  an even number $c$. Then, by Lemma \ref{l:centroid2}, the
tiles on the level set $V=c$ are hexagons whose  centers are some of
the points $p_{k,\ell}$  for some $k,\ell.$
 It can be proved that there are $3c+2$ centers in this level. This centers lie in certain hexagons. We
 denote them by $\{X_1,X_2,\ldots ,X_{3c+2}\}$ labeling them as in the case $\alpha={2\pi}/{3},$ see the Figure \ref{f:alpha60c=4v2}.
 \begin{figure}[H]
 \centerline{\includegraphics[scale=0.4]{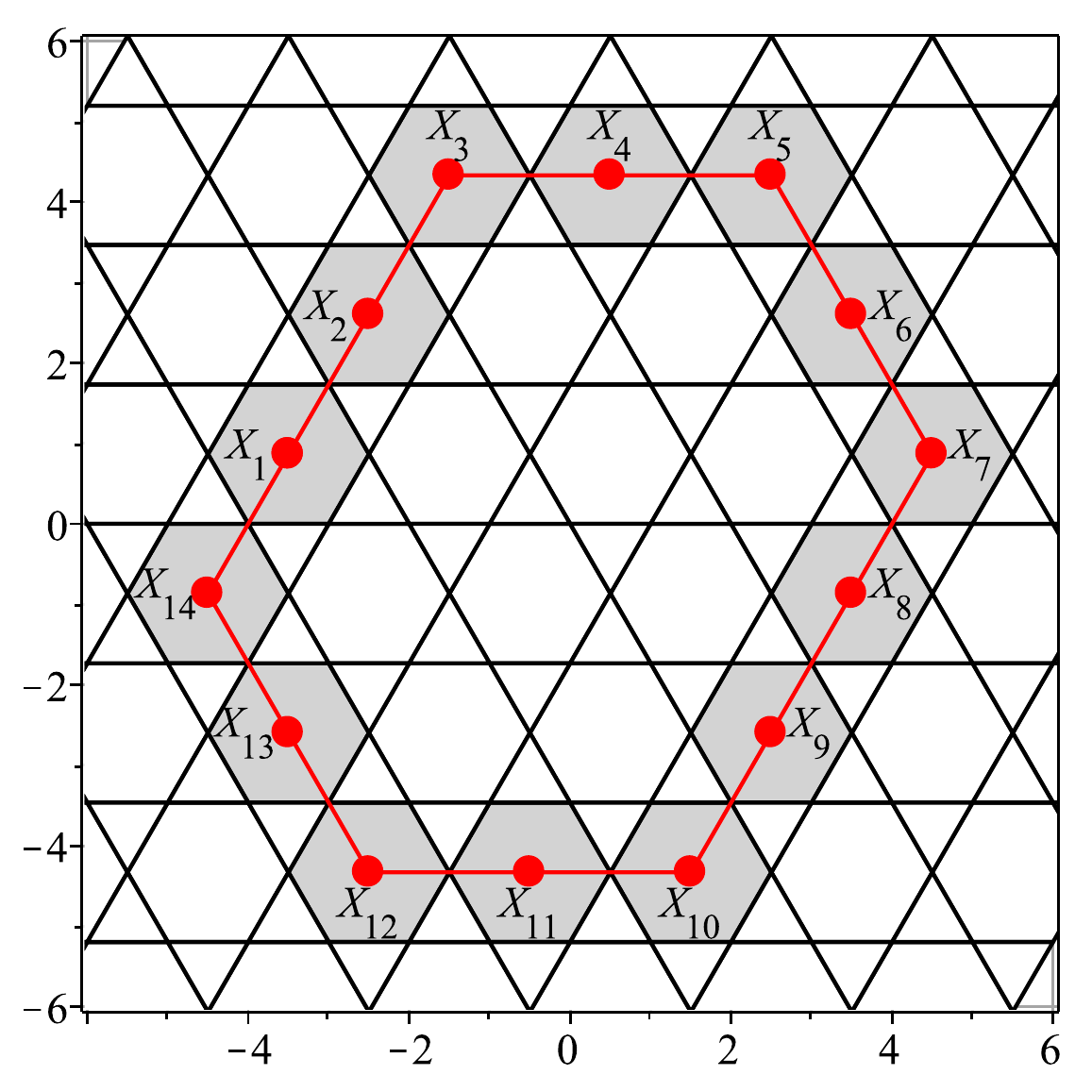}}
    \caption{Position of the centers in the level  $c=4.$ Observe that the points do not belong to the same orbit.
    In this case there are two different orbits. The lines linking the centers are plotted because their expressions play a role in
    order to obtain the expression of the first integrals $V$, as explained in Section \ref{s:firstintegrals}.}\label{f:alpha60c=4v2}
 \end{figure}

In this case $F$ restricted to $\{X_1,X_2,\ldots ,X_{3c+2}\}$ is conjugated to
\begin{equation}\label{e:h3cmes2}
h:\Z_{3c+2}\longrightarrow \Z_{3c+2}\mbox{ where } h(i)=i+{c}/{2}.
\end{equation}
From this equality, and using that $3c\equiv -2$ mod $3c+2$, one
easily gets that when ${c}/{2}$ is even then the minimal period is
$3{c}/{2}+1$,   and that when ${c}/{2}$ is odd, then the minimal
period is $3c+2$. In  the first case we get two periodic orbits
$\{X_1,X_3,\ldots ,X_{3c+1}\}$ and $\{X_2,X_4,\ldots ,X_{3c+2}\}$
while in the second one all the points $X_i$, $i=1,2,\ldots ,3c+1$
belong to the same periodic orbit.

To study the periodicity of the points in the hexagonal tile different from its center, for each $X_j$ we consider its itinerary map $I_j.$
 \begin{itemize}
    \item When ${c}/{2}$ is even, $I_j$ has the form $I_j=A^{3{c}/{2}+1}+v$ (where $v=X_j-A^{3{c}/{2}+1}X_j$)
    and $3{c}/{2}+1=3\cdot 2n+1=6n+1$ for some $n\in\N_0.$ Hence, since $A^6=\operatorname{Id},$ it holds that $I_j=A+v.$
     Therefore $I_j$ restricted to the hexagon which contains
    $X_j,$ is a rotation of angle ${\pi}/{3}$ centered at $X_j$ and every point in the hexagon is a 6-periodic point for $I_j.$
    It implies that these points are $6(3{c}/{2}+1)=9c+6$ periodic points for $F.$

    \item When ${c}/{2}$ is odd, $I_j=A^{3c+2}+v$ and $3c+2=3\cdot 2(2n+1)+2=6(2n+1)+2$ for some $n\in\N_0.$ Thus
    $I_j=A^2+v,$ using again that $A^6=\operatorname{Id}.$ This implies that every point in the hexagon is a $3(3c+2)=9c+6$ periodic point for $F.$
    \end{itemize}

     Now let $c$ be an odd number. Then the tiles in $\{V=c\}$ are triangles whose centers are either the points $q_{k,\ell}$ or
     the points $r_{k,\ell}$ introduced in Lemma \ref{l:centroid2}, for some $k,\ell\in\Z$. The points $q_{k,\ell}$ lie in some
     lines that define a hexagon, as do the points $r_{k,\ell}.$ But now all these centers belong to the same periodic orbit.
     To prove this, as usual, we label these points in a clockwise direction, as Figure \ref{f:alpha60c=3} shows for the case $c=3$: the
     red points are the points $q_{k,\ell}\in\{V=3\}$ while the blue ones are $r_{k,\ell}\in\{V=3\}.$
\begin{figure}[H]
    \centerline{\includegraphics[scale=0.4]{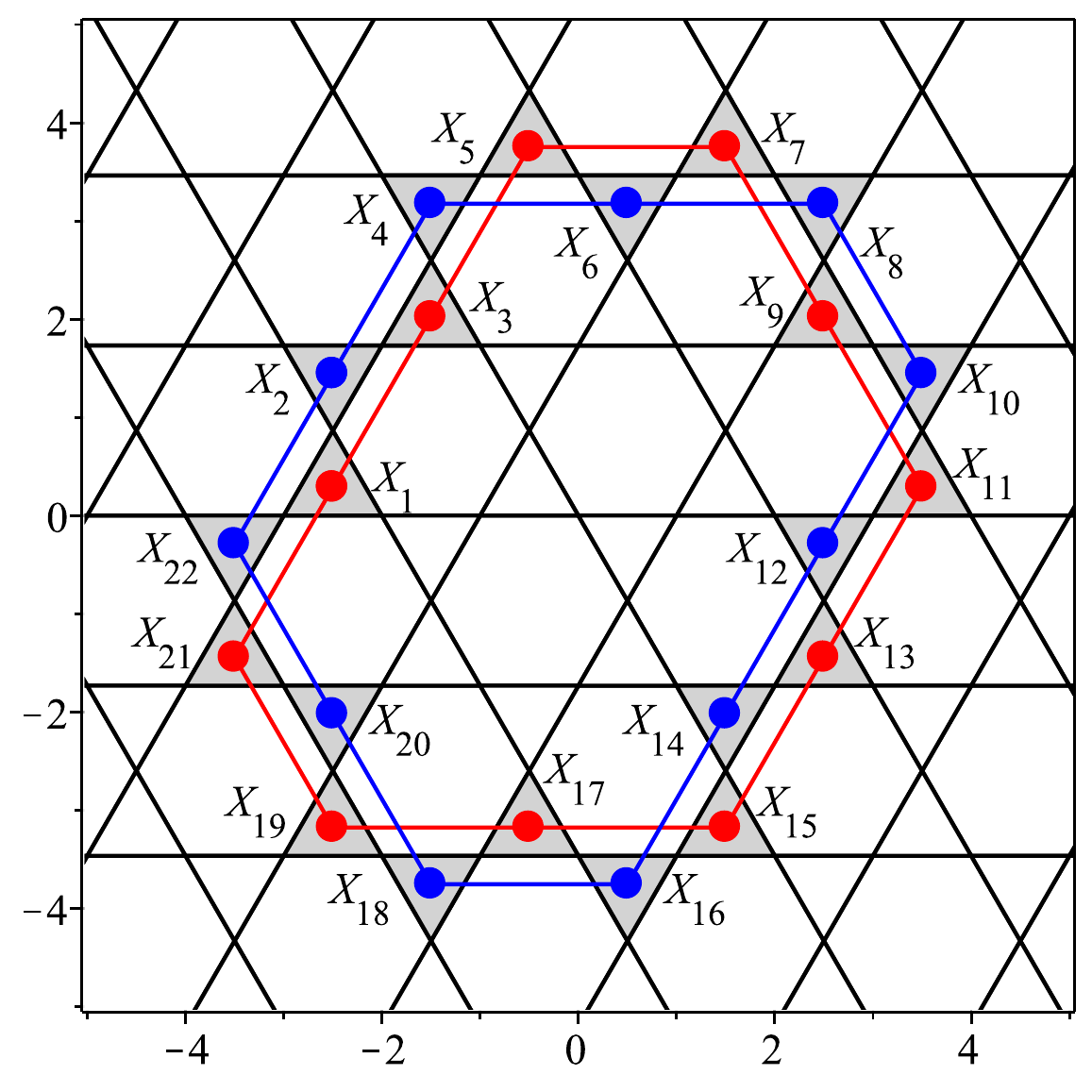}}
    \caption{Position of the centers in the level  $c=3.$ Observe that all the points belong to the same orbit. As shown in Section
    \ref{s:firstintegrals}, the lines joining the centers play a role in the determination of the first integral $V$ and do not
    represent two different orbits.}\label{f:alpha60c=3}
\end{figure}

With this labeling it can be proved that $F(X_i)=X_j$ with $j\equiv
i+c$ (mod $6c+4$) which implies that the minimal period is $p=6c+4.$
To see the periodicity of the points in the triangles different from
its center we consider the itinerary function  of the center $X_j$
which has the form $I_j=A^4+v.$ Then
$I_j^3=A^{12}+(A^4+A^2+\operatorname{Id})v=\operatorname{Id}.$
Arguing as before we get that each point in the triangle different
from its center is a $3(6c+4)=18c+12$ periodic point.

\subsection{Proof of item (iii) of Theorem \ref{th:c}: dynamics on the non zero-free set}

In this case, the dynamics of the points on the edges and vertices
of the tiles is more complicated than the ones found in the cases
$\alpha={\pi}/{2}$ and $\alpha={2\pi}/{3},$ so we are going to give
the details.

\subsubsection{Perfect edges and vertices}

We begin by considering the levels $c=4k,$ $k\in\N.$ We already know
that in these levels there are $3c+2=12k+2$ centers,
$X_1,X_2,\ldots,X_{12k+2}$ and $F(X_i)=X_j$ where $j=i+2k$ mod
$(12k+2).$ Also $X_1,X_3,\ldots ,X_{12k+1}$ form a periodic  orbit
of period $6k+1$, as does $X_2,X_4,\ldots X_{12k+2}.$ Let
$H_j$ be the hexagon such that $X_j\in H_j$ \emph{including its
boundary} (hence also its vertices). Then the hexagons that meet
$y=0$ are $H_1,H_{6k+1}$ (its bottom edge is contained in $y=0$) and
$H_{6k+2},H_{12k+2}$ (its top edge is contained in $y=0$). See
Figure \ref{f:fig0seccio5v3}.
\begin{figure}[H]
    \centerline{\includegraphics[scale=0.5]{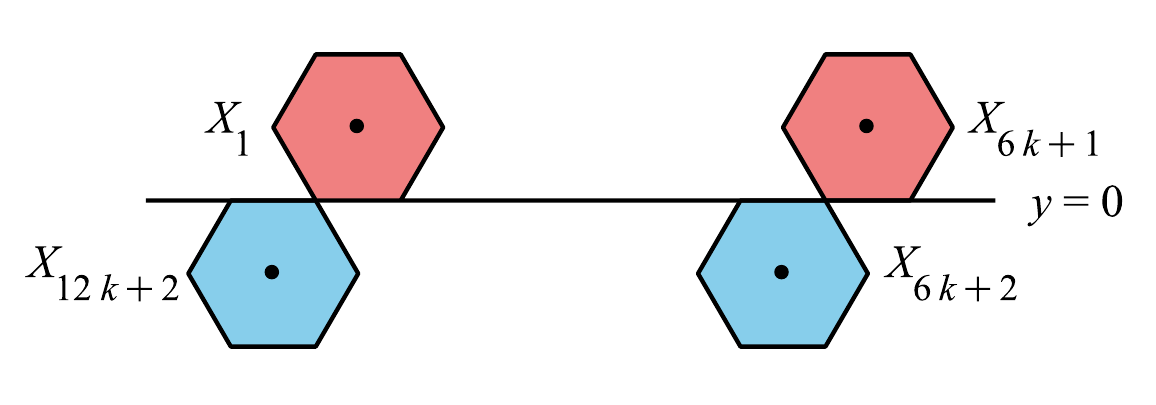}}
    \caption{Position of the hexagonal tiles of a level $c=4k$  which intersect $y=0.$ The red ones belong to the set of
    perfect hexagons.}\label{f:fig0seccio5v3}
    \end{figure}

Clearly for all $j=1,2,\ldots ,6k+1$, $F(H_j)=F_+(H_j)=H_{j+2k}$.
while for $j=6k+3,6k+4,\ldots, 12k+1$ also
$F(H_j)=F_{-}(H_j)=H_{j+2k\mbox{ mod }(12k+2)}.$ But the hexagons
$H_{6k+2},H_{12k+2}$ do not satisfy this property because on the top
edge of these hexagons $F=F^+$. Then we easily get:
\begin{lem} Assume that $c=4k$ and consider the (closed) hexagons $H_1,H_3,\ldots ,H_{12k+1}.$ Then for all $j=1,3,\ldots, 12k+1$
every point in $H_j$ different from its center is periodic of period
$36k+6.$
    \end{lem}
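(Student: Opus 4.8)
The plan is to prove that the family of closed odd-indexed hexagons $\{H_1,H_3,\ldots,H_{12k+1}\}$ is invariant under $F$, with $F$ acting on it as the shift $j\mapsto j+2k \bmod (12k+2)$ does on the centers; the period of the boundary points will then be inherited from the interior non-center points by continuity, exactly as in Theorems \ref{th:a} and \ref{th:b}.

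First I would check which hexagons of the whole level meet the discontinuity line $y=0$. By the preceding description these are only $H_1,H_{6k+1},H_{6k+2}$ and $H_{12k+2}$; among them the two odd-labelled ones, $H_1$ and $H_{6k+1}$, touch the line along their \emph{bottom} edge and hence lie entirely in the closed upper half-plane $\{y\ge 0\}$, where $F=F_+$ is applied uniformly, whereas the troublesome hexagons $H_{6k+2},H_{12k+2}$ carry even labels and are excluded from our collection. Therefore every $H_j$ in the collection lies entirely in one half-plane, so $F|_{H_j}$ is a single rigid branch ($F_+$ or $F_-$); combining this with $F(X_j)=X_{j+2k\bmod(12k+2)}$ and the fact that $F_\pm$ maps tiles to tiles, I would conclude $F(H_j)=H_{j+2k\bmod(12k+2)}$, proving invariance.

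Granting invariance, the shift $j\mapsto j+2k$ on the odd labels is a single $(6k+1)$-cycle, since $\gcd(k,6k+1)=1$ forces the least $p$ with $kp\equiv 0 \bmod (6k+1)$ to be $6k+1$, and parity is preserved under reduction modulo the even number $12k+2$. Thus $F^{6k+1}(H_j)=H_j$ and $F^{6k+1}|_{H_j}$ is the itinerary map $I_j$, which for $c=4k$ (so $c/2=2k$ is even) is a rotation of order $6$ about $X_j$, as established for item (ii) of Theorem \ref{th:c}. Hence every point of $H_j$ other than the fixed point $X_j$ is $6$-periodic for $I_j$ and so $6(6k+1)=36k+6$ periodic for $F$; the passage from the interior to the whole closed hexagon is justified by continuity on the invariant collection.

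The main obstacle is the invariance step: I must rule out that any odd hexagon straddles $y=0$ somewhere along its orbit, since a hexagon split by the line (as happens to the even ones $H_{6k+2},H_{12k+2}$, on whose top edge $F=F_+$ while $F=F_-$ elsewhere) would not be moved by a single isometric branch and the whole argument would collapse. This is precisely what the bottom-edge position of $H_1$ and $H_{6k+1}$ prevents: they meet the line only from above, so the entire collection stays clear of the bad configuration and the identity $F^{6k+1}|_{H_j}=I_j$ is valid on the full closed hexagon, yielding the stated period $36k+6$.
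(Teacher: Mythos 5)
Your proposal is correct and follows essentially the same route as the paper: the key observation in both is that $j\mapsto j+2k \bmod (12k+2)$ preserves parity, so the odd-labelled hexagons never land on the problematic $H_{6k+2}$ or $H_{12k+2}$ (the only ones straddled by the discontinuity line), whence each $H_j$ is moved rigidly, $F^{6k+1}|_{H_j}=I_j$ is the order-$6$ rotation already computed for $c/2$ even, and the boundary inherits the period $6(6k+1)=36k+6$ by continuity. Your write-up actually makes explicit a couple of points the paper leaves implicit (the parity preservation and the single $(6k+1)$-cycle structure via $\gcd(k,6k+1)=1$), but the argument is the same.
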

\begin{proof} For $j=1,3,\ldots, 12k+1$, the hexagons  satisfy $F(H_j)=H_{j+2k\mbox{ mod } (12k+2)},$ hence it is easy to
observe that their images are never the hexagons $H_{6k+2}$ and
$H_{12k+2}$. Then, by continuity, every point on the  boundary of
$H_j$ has the same periodicity as the points inside the hexagon (except the center). In
particular, $H_1,H_3,\ldots ,H_{12k+1}$ form an invariant set.
\end{proof}
 As in the above sections we
call $H_1,H_3,\ldots ,H_{12k+1}$ \emph{perfect hexagons} and their
edges  and vertices behave as the corresponding interior points, apart from the
 centers, that is they are $(36k+6)-$ periodic. Also we will call
\emph{non-perfect edges or vertices} those which do not collide with
a perfect hexagon.

\begin{figure}[H]
    \centerline{\includegraphics[scale=0.4]{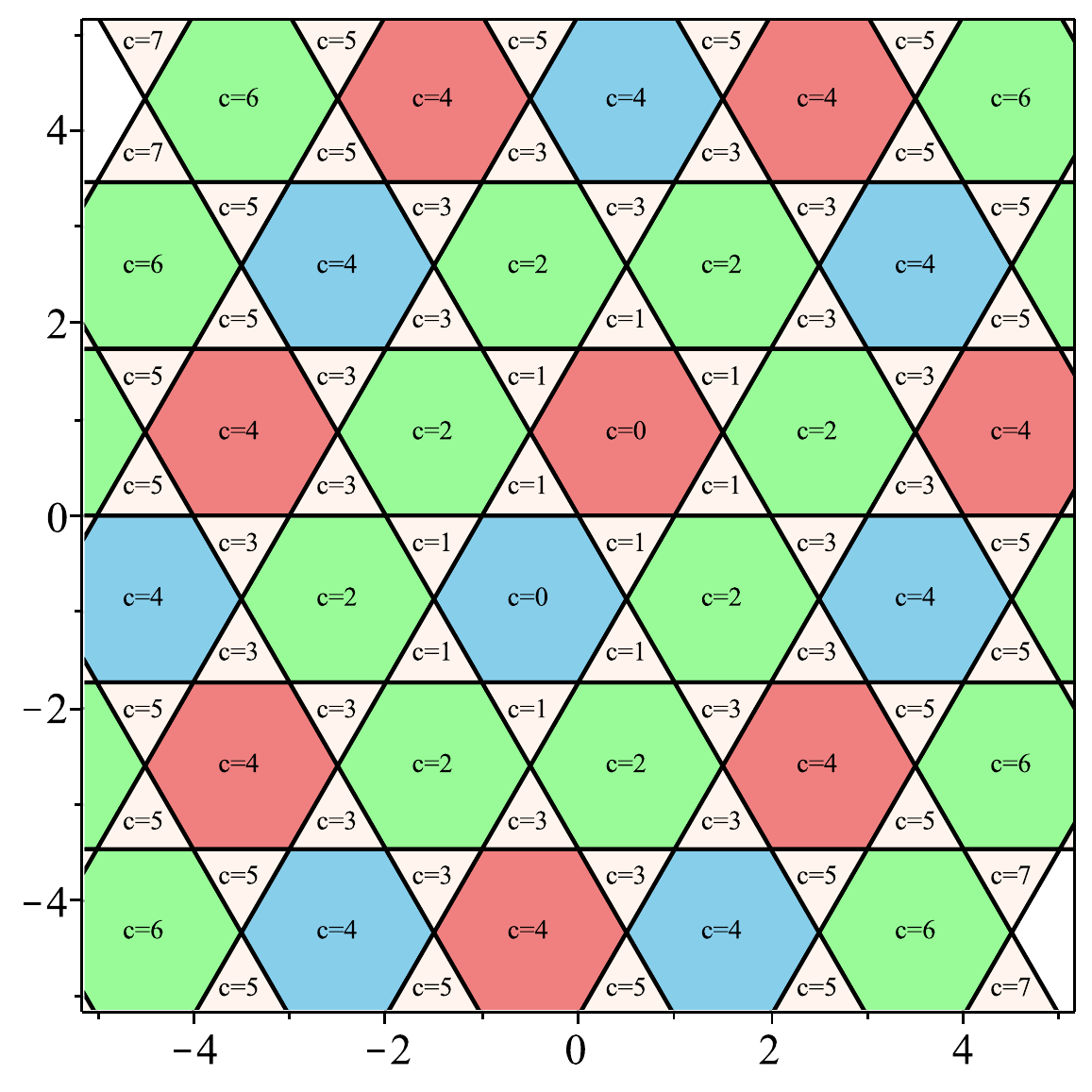}}
    \caption{Position of the perfect hexagons in red. The level  $\{V=c\}$ is indicated in each set.}\label{f:zerofree4}
\end{figure}

\subsubsection{Non-perfect edges}\label{sss:nonperf}

We continue the study considering the even levels of the form
$c=4k+2$. We know that in these level sets there are $3c+2=12k+8$
centers and that all of them belong to the same periodic orbit. The
hexagons which meet $y=0$ are $H_1,H_{6k+4},H_{6k+5}$ and
$H_{12k+8}$, see Figure \ref{f:fig1seccio5v2}.
\begin{figure}[H]
    \centerline{\includegraphics[scale=0.5]{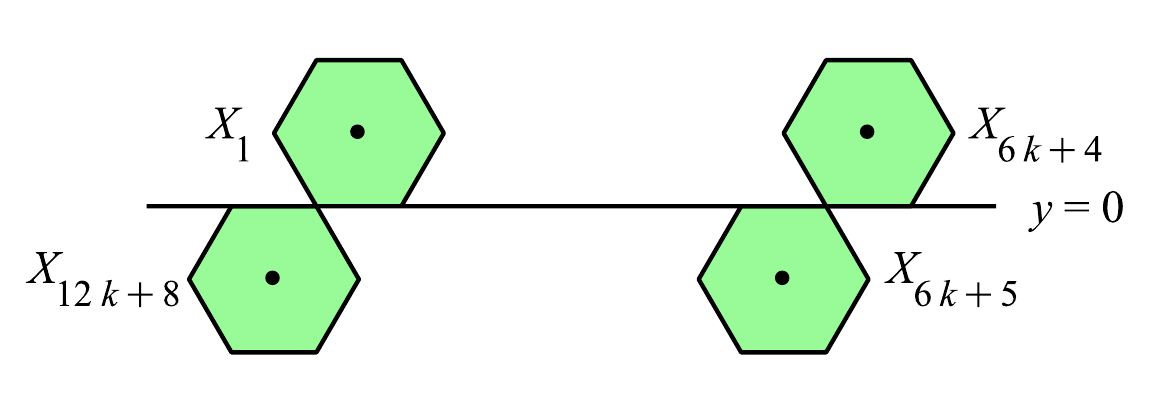}}
    \caption{Position of the hexagonal tiles of a level $c=4k+2$, which intersect $y=0.$}\label{f:fig1seccio5v2}
    \end{figure}

We are going to follow the dynamics of the interior  of the bottom
edge of $H_1,$ that we will denote as $\mathcal{L}$ (for simplicity
we will use the term \emph{edge} although the two boundary points
are not included). This dynamics is, by far, the most complex of
those we have studied in this paper. Since the argument is long, we
first briefly summarize it: \emph{we will show that every point in
$\mathcal{L}$ is $(108k+72)$-periodic. The edge is rigidly mapped by
by iterating $F$ into the edges of the hexagons in the level $4k+2$,
but also into the edges of the triangles in the levels $4k+1$ and
$4k+3$.}

Indeed, after the first iteration, $F(\mathcal{L})$ is the edge of
$H_{2k+2}$ obtained after rotating $\mathcal{L}$ an angle equal to
${\pi}/{3}$, because $F(X_1)=X_{2k+2}$ (remember that
from~\eqref{e:h3cmes2}, $F(X_i)=X_{i+2k+1\mbox{ mod } (12k+8)}.)$ We
continue iterating until we find the hexagon $H_{6k+5}.$ To compute
how many iterations we need for $X_1$ to reach $X_{6k+5}$ we ask for
the minimal positive number $p$ such that $F^p(X_1)=X_{6k+5}.$ That
is, $F^p(X_1)=X_{6k+5},$ or equivalently,  $1+p(2k+1)\equiv
6k+5\mbox{ mod }(12k+8).$ Thus,
\begin{align*}
p\equiv (2k+1)^{-1}(6k+4)= (6k+1)(6k+4)=36k^2+30k+4\equiv 6k+4\mbox{
mod }(12k+8).
\end{align*}

That is $F^{6k+4}(X_1)=X_{6k+5}.$ Now $F^{6k+4}(\mathcal{L})$ is the
edge of $H_{6k+5}$ after rotating $\mathcal{L}$ an angle equal to
$\frac{4\pi}{3}.$  Hence we follow iterating until we arrive to
$X_{12k+8},$ that is, three iterates more:
$F^3(X_{6k+5})=X_{12k+8}.$ This implies that $F^{6k+7}(\mathcal{L})$
is the edge of $H_{12k+8}$ obtained after rotating $\mathcal{L}$ an
angle equal to ${\pi}/{3}.$ Now we ask for the minimal $p$ such that
$F^p(X_{12k+8})=X_{6k+5}.$ The computation gives that $p=12k+5.$
Hence we can write
$$X_1\overset{F^{6k+4}}{\longrightarrow}X_{6k+5}\overset{F^{3}}{\longrightarrow}X_{12k+8}\overset{F^{12k+5}}{\longrightarrow}
X_{6k+5}\overset{F^{3}}{\longrightarrow}X_{12k+8},$$ and following
the edge $\mathcal{L}$ we have that after $18k+15$ iterates the
initial edge $\mathcal{L}$ of $H_1$ is  the top edge of $H_{12k+8},$
\emph{which is the bottom edge of the triangle $T_1$ in the level
$4k+3$.}

Next, we follow the orbit of the centers of the triangles in the
level $c=4k+3$. Let $Y_1,Y_2,\ldots, Y_{24k+22}$ be the centers of
the triangles $T_1,T_2,\ldots ,T_{24k+22}$. All of them form a
unique periodic orbit  and $F(Y_i)=Y_j$ where $j=i+4k+3$ mod
$(24k+22)$, remember that in the triangles $F(Y_i)=Y_{i+c\mbox{ mod
} (6c+4)}$. The triangles with edges in the critical line are
displayed in the Figure \ref{f:fig2seccio5v2}.
\begin{figure}[H]
    \centerline{\includegraphics[scale=0.5]{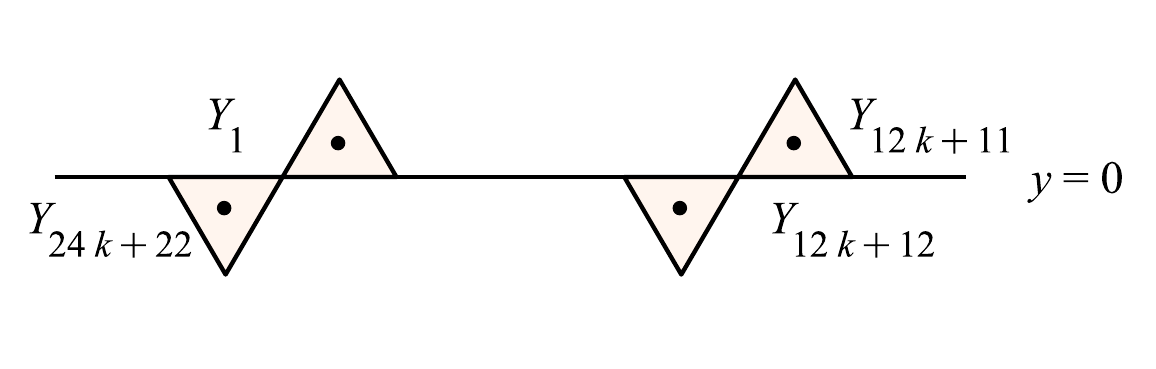}}
        \caption{Position of the triangular tiles which intersect $y=0.$}\label{f:fig2seccio5v2}
\end{figure}

 Taking into account that $(4k+3)^{-1}=18k+15$ in $\Z_{24k+22}$ and solving the corresponding congruences we find that:
$$Y_1\overset{F^{6k+7}}{\longrightarrow}Y_{24k+22}\overset{F^{6k+4}}{\longrightarrow}Y_{12k+12}\overset{F^{18k+18}}
{\longrightarrow}Y_{24k+22}\overset{F^{6k+4}}{\longrightarrow}X_{12k+12}.$$
Then we see that the bottom edge of $T_1$ is transformed into the
top edge of $T_{12k+12}$ after $36k+33$ iterates.  This one also is
the bottom edge of the hexagon $H_{6k+4}$ in the level $c=4k+2,$ see
again Figure \ref{f:zerofree4}, and also Figure \ref{f:T60}.

Following the same procedure it can be seen that
$F^{6k+1}(X_{6k+4})=X_{6k+5}$ and using the calculations made before
we obtain
$$X_{6k+4}\overset{F^{6k+1}}{\longrightarrow}X_{6k+5}\overset{F^{3}}{\longrightarrow}X_{12k+8}\overset{F^{12k+5}}{\longrightarrow}X_{6k+5}.$$
Hence, the bottom edge of $H_{6k+4}$ is transformed into the top
edge of $H_{6k+5}$ after $18k+9$ iterates.

The top edge of $H_{6k+5}$ is also the bottom edge of one triangle
whose center belongs to the level set $4k+1.$ In this  level set
there are $24k+10$ centers of triangles, that we denote by
$Z_1,Z_2,\ldots ,Z_{24k+10},$ and we know that $F(Z_i)=Z_j$ with
$j=i+4k+1$ mod $(24k+10).$ We call $T_1,T_2,\ldots,T_{24k+10}$ these
triangles. Specifically, the top edge of $H_{6k+5}$ is the bottom
edge of $T_{12k+5}$, see Figures \ref{f:zerofree4} and
\ref{f:fig3seccio5v2}.
\begin{figure}[H]
    \centerline{\includegraphics[scale=0.5]{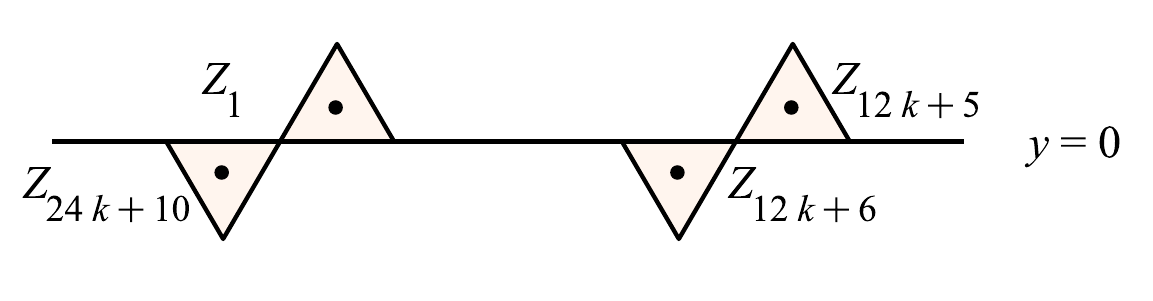}}
            \caption{Position of some of the triangular the tiles which intersect $y=0.$}\label{f:fig3seccio5v2}
\end{figure}

Using that in $\Z_{24k+10}$, $(4k+1)^{-1}=6k+1$ and solving the corresponding congruences we have that
$$Z_{12k+5}\overset{F^{6k+1}}{\longrightarrow}Z_{12k+6}\overset{F^{6k+4}}{\longrightarrow}Z_{24k+10}\overset{F^{18k+6}}
{\longrightarrow}Z_{12k+6}\overset{F^{6k+4}}{\longrightarrow}Z_{24k+10}.$$
It follows that after $36k+15$ iterates, the bottom edge of $T_{12k+5}$ is transformed in the top edge of $T_{24k+10}.$

But this top edge of $T_{24k+10}$ is exactly the edge $\mathcal{L}$.
Hence summing up the involved iterates we have that every  point in
$\mathcal{L}$ is a  $108k+72$ periodic point. Also the same holds
for all the points belonging to the $108k+72$ edges obtained
iterating $\mathcal{L}.$ In other words, we get a periodic orbit of
edges of period $108k+72$ and, of course, the points of
$\mathcal{L}$ are mapped to themselves after these iterations.

\subsubsection{Non-perfect vertices}

And what about the vertices? As we will see in the proof of Theorem
\ref{t:nonzerofreepi3}, we only need to prove the periodicity  of
the vertices in $y=0$. Observe that if such a vertex belongs to a
perfect hexagon, then we already know that it is periodic with the
same period as the interior points.  If it is non-perfect, then
either $(a)$ it is mapped to a vertex colliding from the top with a
triangle of level $V=4k+1$ and a hexagon of level $V=4k+2$, both in
$H_+$, as the solid-circle point in Figure \ref{f:vertex}; or (b) it
collides from the top with a hexagon of level $V=4k+2$  and triangle
of level $V=4k+3$, both in $H_+$, as the box-shaped point in
Figure~\ref{f:vertex}.

\begin{figure}[H]
    \centerline{\includegraphics[scale=0.7]{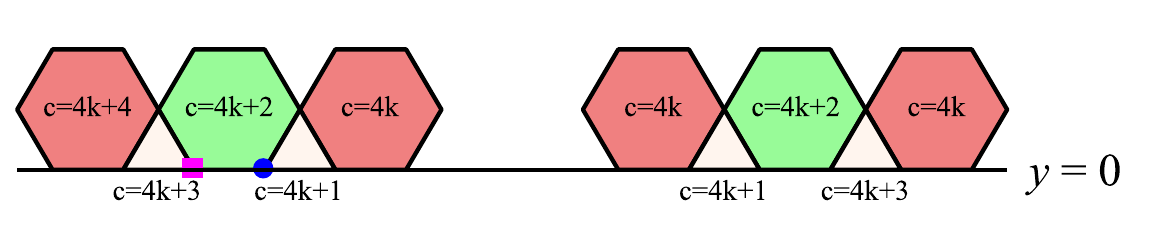}}
            \caption{Position of the non-perfect vertices in $y=0$. If $H_1$ is the left non-perfect hexagon with level
            $V=4k+2$ then the vertex $V_1(H_1)$  is the box-shaped point and the vertex $V_6(H_1)$ is the solid-circle point.
            The perfect hexagons are the red ones.}\label{f:vertex}
\end{figure}

To study the dynamics of the non-perfect vertices in $y=0$, we will
use the following notation: given a hexagon $H$, we label its
vertices as $V_i(H)$ with $i=1,\cdots,6$ starting from the
left-bottom vertex and in clockwise sense, see Figure
\ref{f:verthex}.

\begin{figure}[H]
    \centerline{\includegraphics[scale=0.35]{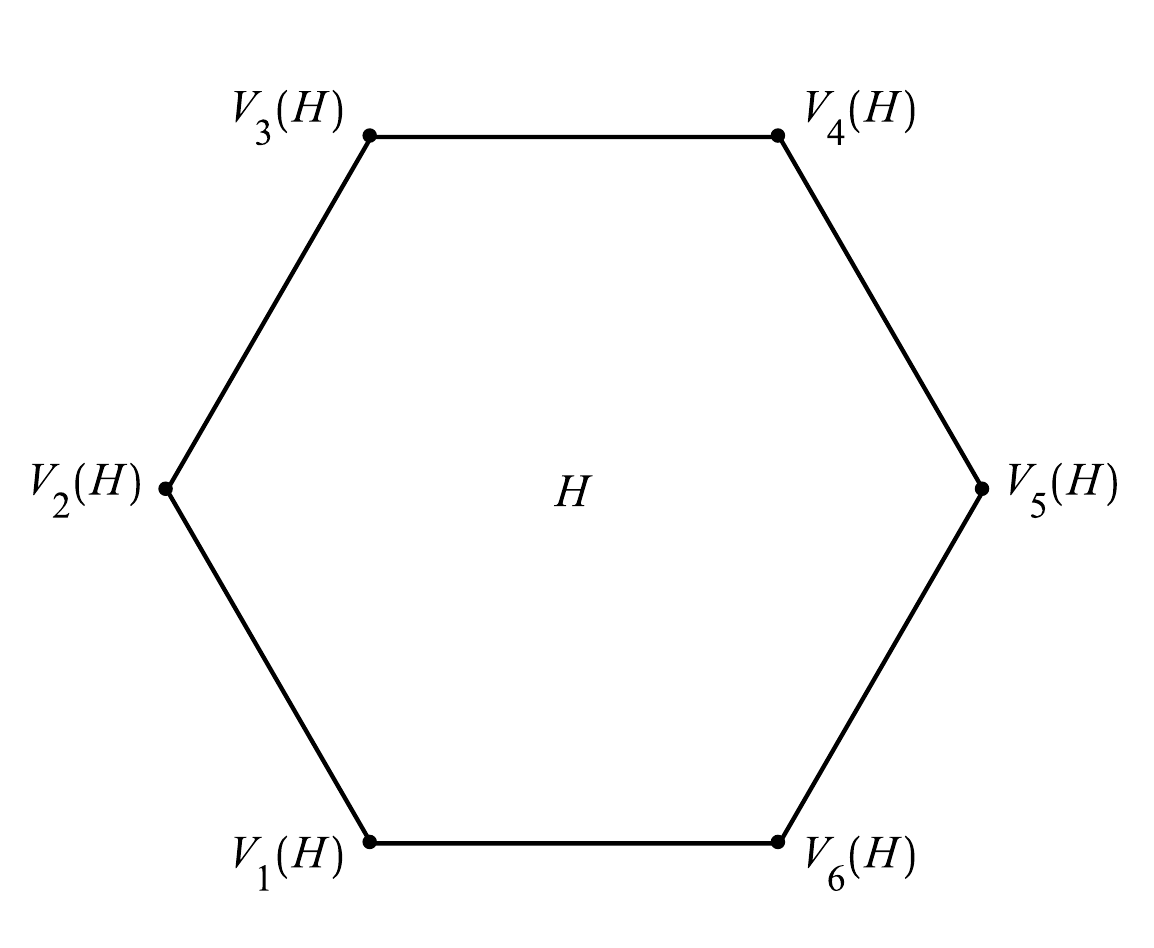}}
            \caption{Labeling the vertices of a hexagon $H$}\label{f:verthex}
\end{figure}

Let $H_1$ be the non-perfect hexagon at level $V=4k+2$ in $\mathcal{Q}_1$, whose intersection with $y=0$ is its bottom edge.
Then:

$(a)$ We will follow the orbit of the point $V_6(H_1)$ (the blue
point in Figure \ref{f:vertex}) by using the results found in
Section \ref{sss:nonperf}. In particular, we know  that $F^3(H_1)=H_{6k+4}$ ,
$F^{6k+1}(H_{6k+4})=H_{6k+5}$ , $F^3(H_{6k+5})=H_{12k+8}$ and  $F^{6k+1}(H_{12k+8})=H_{1}.$
Therefore, we easily find
$$V_6(H_1)\overset{F^{3}}{\longrightarrow}V_3(H_{6k+4})\overset{F^{6k+1}}{\longrightarrow}V_4(H_{6k+5})=V_1(H_{6k+4})
\overset{F^{6k+1}}{\longrightarrow}$$$$V_2(H_{6k+5})\overset{F^{3}}{\longrightarrow}V_5(H_{12k+8})
\overset{F^{6k+1}}{\longrightarrow}V_6(H_1),$$
hence the point $V_6(H_1)$ is $(18k+9)$-periodic.

(b) We will pursue the orbit of the point $V_1(H_1)$ (the box-shaped point in Figure \ref{f:vertex}).
$$V_1(H_1)\overset{F^{3}}{\longrightarrow}V_4(H_{6k+4})\overset{F^{6k+1}}{\longrightarrow}V_5(H_{6k+5})
\overset{F^{3}}{\longrightarrow}V_2(H_{12k+8})\overset{F^{6k+1}}{\longrightarrow}V_3(H_{1})
\overset{F^{3}}{\longrightarrow}$$$$V_6(H_{6k+4})\overset{F^{6k+1}}{\longrightarrow}V_1(H_{6k+5})
\overset{F^{3}}{\longrightarrow}V_4(H_{12k+8})=V_1(H_1),$$
hence the point $V_1(H_1)$ is $(18k+15)$-periodic.

Now we have all the ingredients to prove the main result of this
section, that clearly implies item $(iii)$ of Theorem \ref{th:c}.

\begin{teo}\label{t:nonzerofreepi3}
    Every non zero-free point of $F$ is periodic. Furthermore:
    \begin{enumerate}[(a)]
    \item If $(x,y)$ is a point in the edge of a perfect hexagon (which has energy level $V=4k$), then it is periodic with period $36k+6$.
    \item If $(x,y)$ is a point in a non-perfect (open) edge of a tile then it is periodic
    with period $108k+72$ for some $k\in\mathbb{N}_0$.
    \item If $(x,y)$ is a non-perfect vertex then it is periodic
    with period $18k+9$ or $18k+15$ for some $k\in\mathbb{N}_0$.
    \end{enumerate}
Observe that any non zero-free point belongs to one of the above cases.
\end{teo}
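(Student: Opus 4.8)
The plan is to assemble the three claims from the orbit computations already carried out in this section, reducing in each case to a finite list of representative edges and vertices whose orbits have been traced explicitly. First I would record the two structural facts that make every point periodic. By item $(i)$ of Theorem~\ref{th:c} the non zero-free set is $\mathcal{F}=\mathcal{F}_{\pi/3}$, i.e.\ exactly the union of the (open) edges and the vertices of the tiles; and since $F$ sends tiles to tiles isometrically (as in item $(ii)$), it permutes the open edges among themselves and the vertices among themselves, bijectively. Moreover, by the very definition $\mathcal{F}=\bigcup_{i\in\N}LC_{-i}$, every critical point has a forward iterate on the discontinuity line $LC_0=\{y=0\}$. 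Because each level set is a bounded necklace and the orbit computations of this section exhibit each critical orbit as confined to a bounded union of finitely many necklaces, the forward orbit of any edge or vertex is a finite set of edges, resp.\ vertices; the bijectivity of $F$ forbids pre-periodicity, so every point of $\mathcal{F}$ is periodic and its period equals that of any representative in its orbit, in particular of one lying on $\{y=0\}$.

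For part $(a)$ I would simply invoke the lemma on perfect hexagons proved above: for a level $c=4k$ the closed perfect hexagons $H_1,H_3,\dots,H_{12k+1}$ form an invariant family whose images never reach $H_{6k+2}$ or $H_{12k+2}$, so by continuity every boundary point (edge or vertex) of a perfect hexagon inherits the period $36k+6$ of the interior points other than the center. For part $(b)$ the representative is the bottom edge $\mathcal{L}$ of $H_1$ at level $c=4k+2$ analysed in Section~\ref{sss:nonperf}: the explicit chain of iterates shows that $\mathcal{L}$ is rigidly transported through edges of the hexagons of level $4k+2$ and of the triangles of levels $4k+1$ and $4k+3$, returning to itself after $108k+72$ steps. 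Since the dynamics on each necklace is a rotation among its beads (items $(ii)$ of Theorems~\ref{th:a},~\ref{th:b} and~\ref{th:c}) and $F$ is equivariant under the translations preserving both the tessellation and $\{y=0\}$, every non-perfect open edge is a bead-rotation or a lattice-translate of $\mathcal{L}$, hence lies in an orbit of the same length $108k+72$.

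For part $(c)$ I would first use $\mathcal{F}=\bigcup_i LC_{-i}$ to reduce to vertices on $\{y=0\}$: the forward orbit of any non-perfect vertex meets $\{y=0\}$ in a vertex of the same period, and a $\{y=0\}$-vertex lying on a perfect hexagon is already covered by part $(a)$. It then remains to treat the non-perfect vertices on $\{y=0\}$, which fall into exactly the two local configurations described before the statement: the solid-circle type, realised by $V_6(H_1)$, and the box type, realised by $V_1(H_1)$. The two explicit vertex chains give periods $18k+9$ and $18k+15$ respectively, and the translational symmetry of the configuration along $\{y=0\}$ guarantees that every non-perfect $\{y=0\}$-vertex is of one of these two types, closing the three cases.

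The main obstacle is not the individual orbit computations, which are already finished, but the \emph{covering} step in $(b)$ and $(c)$: one must verify that the single representatives $\mathcal{L}$, $V_6(H_1)$ and $V_1(H_1)$, together with the bead-rotations and tessellation translations commuting with $F$, genuinely exhaust \emph{all} non-perfect edges and all non-perfect $\{y=0\}$-vertices, leaving no orbit of a different length. This requires the careful bookkeeping of how a single orbit threads through the three consecutive levels $4k+1,4k+2,4k+3$ (and, for the vertices, how the six vertices of a hexagon are distributed between the two types), together with a count check that the orbit lengths $108k+72$, $18k+9$ and $18k+15$ are compatible with the total numbers of edges and vertices present at those levels.
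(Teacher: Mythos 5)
Your overall strategy coincides with the paper's: periodicity follows from the finiteness of each orbit of edges and vertices plus the bijectivity of $F$; part $(a)$ is the perfect-hexagon lemma; parts $(b)$ and $(c)$ rest on the explicit orbit computations for $\mathcal{L}$, $V_6(H_1)$ and $V_1(H_1)$, together with a reduction of every critical point to a representative on $LC_0=\{y=0\}$. The genuine gap is in how you close the covering step of part $(b)$. You invoke that ``$F$ is equivariant under the translations preserving both the tessellation and $\{y=0\}$'', but no nontrivial translation $T_v(p)=p+v$ commutes with $F$: writing $F(p)=A\bigl(p-(\operatorname{sign}(y),0)\bigr)$ with $A=R_{\pi/3}$, the identity $F\circ T_v=T_v\circ F$ forces $(A-\operatorname{Id})v=0$, hence $v=0$; even the weaker intertwining $F\circ T_v=T_{Av}\circ F$ (valid only while the sign of $y$ is preserved) rotates the translation vector at each step, so it cannot be propagated along an orbit. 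Likewise, a ``bead-rotation'' is just the action of $F$ on the necklace, so asserting that every non-perfect edge is a bead-rotation of $\mathcal{L}$ presupposes exactly what must be shown, namely that all non-perfect edges lie in the single orbit of $\mathcal{L}$ for the appropriate $k$.

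The paper closes this step the same way you close part $(c)$: since $\mathcal{F}=\bigcup_i LC_{-i}$ and edges are transported rigidly (Lemma \ref{convex} together with the analogue of Lemma \ref{l:itineraris}), every non-perfect edge is mapped after finitely many iterates onto an edge contained in $y=0$; the edges on $y=0$ not belonging to a perfect hexagon are exactly the four associated with the triple of levels $4k+1,4k+2,4k+3$ (the bottom edges of $H_1$ and $H_{6k+4}$ and the top edges of $H_{6k+5}$ and $H_{12k+8}$ at level $4k+2$), and the computation of Section \ref{sss:nonperf} shows that these four all lie on the single $(108k+72)$-cycle of edges. Replacing your symmetry argument by this reduction --- which you already set up in your first paragraph and apply to the vertices --- repairs $(b)$. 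The residual appeal to ``translational symmetry'' in $(c)$ is less damaging, since there it concerns only the static classification of the vertices lying on $y=0$ into the two local types, but it too should be replaced by direct inspection of which tiles meet such a vertex, as in Figure \ref{f:vertex}.
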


\begin{proof} We already know that the set of the non zero-free points is formed by the edges and the
vertices of the hexagons and triangles introduced before.

    Consider the points in one edge. Then after a finite number of iterates this edge is transformed in one edge contained in $y=0.$
If this edge correspond to an edge of a perfect hexagon with center
belonging to the level $4k,$ every  point will be
$6(6k+1)-$periodic. If not, it will be an edge of a polygon with
center belonging to the level either $4k+1,4k+2$ or $4k+3.$ From
the discussion above we know that every point will be
$(108k+72)-$periodic.

With respect to the vertices, observe that since any vertex belongs to
$\mathcal{F}$, after some iterates it will be  mapped to a vertex
point in $y=0$. Hence there are three possibilities: it is mapped to
a perfect vertex of a perfect hexagon in  $H_+$, which has energy level
$V=4k$ (and in this case it is periodic of period $36k+6$); or it is
mapped to a vertex colliding from the top with a triangle of level
$V=4k+1$ and a hexagon of level $V=4k+2$ (and in this case it is
periodic with period $18k+9$); or it is mapped to a vertex colliding
from the top with a hexagon of level $V=4k+2$  and a triangle of
level $V=4k+3.$ In this case it is periodic with period $18k+15.$
\end{proof}

The proof of item $(iv)$ is a straightforward consequence of all the
previous results.

\section{Obtaining the first integrals}\label{s:firstintegrals}

We have intuited the expressions of the first integrals after several simulations. For
completeness we present in detail a three-step constructive
procedure that allows to obtain the first  integrals of $F$ given in
\eqref{e:Vpi2} corresponding to $\alpha=\pi/2.$ For the other two
cases the line of argument is the same, and the details are analogous,
and we only give some comments.

\smallskip

\noindent{\bf Step 1.} By displaying some preimages of the critical
line $LC_{-i}$ we realize that the zero-free set is formed by open
tiles of a regular or uniform tessellation of $\R^2$. This fact is trivial in
this case where $\alpha=\pi/2$ but, a priori, it was not so obvious
in the cases $\alpha=2\pi/3$ and $\alpha=\pi/3$ studied in Sections
\ref{s:alpha2pi3} and \ref{s:alpha1pi3}. The normal form of $F$ given
in \eqref{e:normalform} regularizes the tesselation.

\smallskip

\noindent{\bf Step 2.} From preliminary numerical explorations we
also realize that the centers of some tiles form an invariant set
under the dynamics of $F.$ In the case $\alpha=\pi/2$ these centers
were located in the lines $y=x+c$, $y=-x+c+1$, $y=x-c$ and
$y=-x-c-1$ for a certain fixed value $c\in\mathbb{N}_0$, depending on
the quadrant where the center points are located (see Lemma
\ref{l:Nivell} and Figure \ref{f:figura1}, and see Lemmas
\ref{l:hexagons} and \ref{l:triangles} for the case $\alpha=2\pi/3$).

\smallskip

\noindent{\bf Step 3.} Isolating the value in the expression of the
lines linking the centers we obtain that $c=y-x$  for $
(x,y)\in\mathcal{Q}_1$, $c=y+x-1$ for $(x,y)\in\mathcal{Q}_2$,
$c=x-y$ for $(x,y)\in\mathcal{Q}_3$ and $c=-x-y-1$ for
$(x,y)\in\mathcal{Q}_4,$ where recall that $\mathcal{Q}_j,
j=1,2,3,4,$ are the four quadrants of $\R^2.$ From these expressions
and taking into account that $c\in\mathbb{N}_0$ and that given a
zero-free point $(x,y)$ the center point of its associated tile is
$\left(\operatorname{E}(x)+{1}/{2},\operatorname{E}(y)+{1}/{2}\right)$,
we arrive to the expression of the first integral $
V_{\pi/2}(x,y)=\max \left( \left|
\operatorname{E}(x)+\operatorname{E}(y)+1 \right| -1, \left|
\operatorname{E}(x)- \operatorname{E}(y) \right| \right). $

\section{Final comments.}

We have proved that for $\alpha\in\{\pi/3,\pi/2,2\pi/3\}$,
the corresponding zero-free sets $\mathcal{U}$ are the union of a countable number of open sets (the tiles), hence the associated critical sets $\mathcal{F}=\mathbb{R}^2\setminus\mathcal{U}$  are closed sets. In consequence, for any point $(x,y)\in\mathbb{R}^2$, the distance $\mathrm{dist}\left((x,y),\mathcal{F}\right)$ is well defined. Since $\mathcal{F}$ is also invariant, we have:
\begin{nota}\label{r:dist}
Any map \eqref{e:normalform} with $\alpha\in\{\pi/3,\pi/2,2\pi/3\}$ has the non-quantized continuous first integral $W(x,y)=\mathrm{dist}\left((x,y),\mathcal{F}\right)$.
\end{nota}

\medskip

We believe that the only pointwise periodic cases for the maps $F$ with $\alpha\in(0,2\pi)$, 
are the ones studied in this work as well the cases $\alpha\in\{4\pi/3,3\pi/2, 5\pi/3\}$ (recall that we where motivated by the study of the maps $G$ in \eqref{E:Grho}
with $|\rho|<2$, which are conjugated with the maps $F$ in \eqref{e:normalform}
with $\alpha\in(0,\pi)$).
In these later cases we have observed that the quantized first integrals given in this paper are also integrals of these maps. In particular: $V_{\pi/2}$, $V_{2\pi/3}$  and $V_{\pi/3}$ are first integrals of $F$ when  $\alpha=3\pi/2, 5\pi/3$ and $4\pi/3$ respectively. However notice that 
none of these last maps are conjugated to the maps considered in this work: for example, note that the maps $F$ with  $\alpha\in\{4\pi/3,3\pi/2, 5\pi/3\}$ do not have fixed points since the centers of the rotations are virtual.

The maps $F$ belong to the class of symmetric maps studied in the relevant paper \cite{GQ}. We refer the reader to this reference to learn about the general properties of the maps $F$ with
$\alpha$ a general value in $[0,2\pi)\setminus\{{\pi}/{3},
\pi/{2},2\pi/{3},4\pi/3,3\pi/2, 5\pi/3\}$. For instance, in that paper it is proved that for any $\alpha\ne\pm\pi$ being a rational multiple of $\pi$ there exists a sequence of open invariant nested necklaces,  that tend to infinity,  each one of them being similar  to the level sets of our quantized first integrals, whose beads are polygons,  and where the dynamics of $F$ is given by a product of two rotations.  Remarkably, although the adherence of the union of all these invariant necklaces does not fill the full plane, it  allows to prove that all orbits of $F$ are bounded.

\medskip

\centerline{\textbf{Acknowledgements}} We want to thank the anonymous reviewers for their comments that have allowed us to better contextualize the problem, improve our article and have provided us with very interesting references. The authors are supported by
Ministry of Science and Innovation--State Research Agency of the
Spanish Government through grants PID2019-104658GB-I00  (first and
second authors),  DPI2016-77407-P
 (AEI/FEDER, UE, third author) and MTM2017-86795-C3-1-P (fourth autor). The first, second and fourth authors are also supported by the grant 2017-SGR-1617  from
AGAUR,  Generalitat de Catalunya. The third author acknowledges the
group's research recognition 2017-SGR-388 from AGAUR, Generalitat de
Catalunya.

\end{document}